\newtheorem{theorem}{Theorem}[section]
\newtheorem{proposition}[theorem]{Proposition}
\newtheorem{lemma}[theorem]{Lemma}
\theoremstyle{definition}
\newtheorem{definition}[theorem]{Definition}
\newtheorem{example}[theorem]{Example}
\newtheorem{remark}[theorem]{Remark}
\DeclareFontFamily{OT1}{rsfs}{}
\DeclareFontShape{OT1}{rsfs}{n}{it}{<-> rsfs10}{}
\DeclareMathAlphabet{\curly}{OT1}{rsfs}{n}{it}
\newcommand\C{\mathbb C}
\newcommand\Q{\mathbb Q}
\newcommand\Z{\mathbb Z}
\newcommand\Coh{\mathrm{Coh}}
\newcommand\vir{\mathrm{vir}}
\newcommand\rk{\operatorname{rk}}
\newcommand\Hom{\operatorname{Hom}}
\newcommand\Ext{\operatorname{Ext}}
\newcommand\INTO{\ar@{^{(}->}[r]}
\DeclareRobustCommand{\SkipTocEntry}[4]{}
\def\Pic{\mathrm{Pic}}
\def\GL{\mathrm{GL}}
\def\PGL{\mathrm{PGL}}
\def\and{\quad\mathrm{and}\quad}
\DeclareMathOperator{\Ch}{ch}
\DeclareMathOperator{\Br}{Br}
\DeclareMathOperator{\End}{End}
\newcommand{\dirk}[1]{{\color{blue}{[#1]}}}
\newcommand{\ww}[1]{{\color{orange}{[#1]}}}
\newcommand{\ch} [3][2]{\operatorname{ch}_{#2}( #3 )}
\begin{document}

\title{Euler characteristics of moduli of twisted sheaves on Enriques surfaces}

\author{Dirk van Bree \and Weisheng Wang}

\begin{abstract}
Let $Y$ be an Enriques surface and let $\mathcal{A}$ be an Azumaya algebra corresponding to the non-trivial Brauer class. Let $M$ be the moduli space of stable twisted sheaves on Enriques surfaces with twisted Chern character $M^H_{\mathcal{A}/Y}(2,c_1,\Ch_2)$ with virtual dimension $N$. We show that the virtual Euler characteristic $e^\vir(M)$ only depends on $N$, more precisely, $e^\vir(M)=0$ when $N$ is odd and $e^\vir(M)=2\cdot e(Y^{[\frac{N}{2}]})$ when $N$ is even.
\end{abstract}

\maketitle

\section{Introduction}

\subsection{Main result}
The objective of enumerative geometry is the computation of enumerative invariants of moduli spaces, most notably their (virtual) Euler characteristics. Among the moduli spaces studied are the moduli spaces of sheaves and that of twisted sheaves. The goal of the current paper is to compute the virtual Euler characteristics of moduli spaces of twisted sheaves on Enriques surfaces. 

Recall that an Enriques surface is a smooth projective surface $Y$ with nontrivial canonical bundle $\omega_Y$ satisfying $\omega_Y^2 = \mathcal{O}_Y$ and $H^1(Y, \mathcal{O}_Y) = 0$. Many other definitions exist~\cite{Dolgachev, CossecDolgachevLiedtke}. A fundamental property of an Enriques surface is that its fundamental group is $\mathbb{Z}/2\mathbb{Z}$ and that its universal cover is a K3 surface $\tilde{Y}$ with a fix-point free involution $\iota$.

The notion of twist is provided by the \emph{Brauer group} $\Br(Y)$, which always admits a map to the \emph{topological Brauer group} $H^3(Y, \mathbb{Z})$. In the case of an Enriques surface, there are isomorphisms $\Br(Y) \cong H^3(Y, \mathbb{Z}) = \mathbb{Z}/2\mathbb{Z}$. Thus, there is a unique nontrivial Brauer class $B_Y$. A twisted sheaf is an $\mathcal{A}$-module, where $\mathcal{A}$ is an Azumaya algebra representing the Brauer class. One can then define the moduli space of stable twisted sheaves, along with its virtual structure. Denote the Hilbert scheme of $n$ points on $Y$ by $Y^{[n]}$. Our main result is then the following.

\begin{theorem}
\label{maintheorem}
    Let $Y$ be an Enriques surface and let $\mathcal{A}$ be an Azumaya algebra of degree $2$ on $Y$ that represents the unique nontrivial Brauer class. Fix a polarization\footnote{All rank $2$ twisted sheaves on $Y$ are automatically stable, therefore we don't need polarization and the g.c.d. condition $\gcd(2,c_1H)=1$. The moduli space can be denoted by $M_{\mathcal{A}/Y}(\Ch_\mathcal{A})$. We write $H$ here fore latter convenience when pass to moduli of ordinary sheaves where polarization will indeed be needed.  } $H$ of $Y$. Let $c_1\in H^2(Y,\Z)_{\mathrm{tf}}$ be a class modulo torsion. Let $\Ch_\mathcal{A}=(2,c_1,\Ch_2)$ be a twisted Chern character. Let $M = M^H_{\mathcal{A}/Y}(\Ch_\mathcal{A})$ be the moduli space of stable twisted sheaves with twisted Chern character $\Ch_\mathcal{A}$. Denote by $N$ the virtual dimension of $M$. Then we have an equality
    \begin{equation}
        \label{eq:main-result-intro}
        e^{\vir}(M) = \begin{cases}
            0 & \text{ if $N$ is odd,}\\
            2\cdot e(Y^{[\frac{N}{2}]}) & \text{ if $N$ is even,}
        \end{cases}
    \end{equation}
    where $e^\vir(\cdot)$ denotes the virtual Euler characteristic.
\end{theorem}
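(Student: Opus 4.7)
The plan is to establish the formula in two stages: first, to reduce the computation of $e^\vir(M)$ to a function of the virtual dimension $N$ alone via a universality argument; and second, to evaluate that function in well-chosen test cases.

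For the universality step, I would adapt the G\"ottsche--Kool--Laarakker framework (originally for untwisted moduli of sheaves on surfaces) to the twisted setting. The virtual Euler characteristic should be expressible as a universal polynomial in the characteristic numbers $c_1^2$, $c_1 \cdot K_Y$, $K_Y^2$, $c_2(Y)$, $\Ch(\mathcal{A})$, and $\Ch_2$. For an Enriques surface $Y$ with $c_1 \in H^2(Y,\Z)_{\mathrm{tf}}$, one has $K_Y \cdot c_1 = 0$ since $K_Y$ is $2$-torsion, and $K_Y^2 = 0$; also $c_2(Y) = 12$ and the topological invariants of $\mathcal{A}$ are fixed by the unique nontrivial Brauer class. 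Consequently the universal polynomial collapses to a function $f(N)$ of the virtual dimension alone. Establishing this reduction is the main technical task, since one must carry the twisted obstruction theory through a wall-crossing or cobordism argument while keeping track of the Azumaya algebra $\mathcal{A}$.

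Once $e^\vir(M) = f(N)$ is known, it remains to evaluate $f(N)$. For $N$ odd, I would exhibit a symmetry---for instance an involution on $M$ inherited from the K3 double cover $\pi\colon \tilde{Y} \to Y$, or from Serre duality combined with the non-triviality of $\omega_Y$---that acts without fixed points and reverses the orientation of the obstruction theory, forcing $e^\vir(M) = 0$. For $N$ even, I would specialize $(c_1,\Ch_2)$ to a Chern character admitting an explicit geometric description. A natural candidate is to choose the twisted Chern character so that $M$ parameterizes twisted ideal sheaves of the form $\mathcal{A} \otimes I_Z$ with $Z \subset Y$ a length-$N/2$ subscheme; modulo subtleties this identifies $M$ with a moduli space closely related to $Y^{[N/2]}$. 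The factor $2$ should emerge either from a $\Z/2$-gerbe or two-component structure on $M$ reflecting the Brauer obstruction, or equivalently from descent along $\pi$ after recognizing $\pi^*\mathcal{A} \cong \End(\widetilde{E})$ on the K3 cover (since $\pi^* B_Y = 0$), with the two choices of equivariant structure on $\widetilde{E}$ producing the doubling.

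The main obstacle is the extension of the universality machinery to the twisted category: the wall-crossing and cobordism arguments must be developed in the presence of $\mathcal{A}$, and the dependence of the virtual class on $\Ch(\mathcal{A})$ must be tracked carefully. A secondary challenge is choosing the test case in the even-$N$ regime cleanly enough to produce exactly $2 \cdot e(Y^{[N/2]})$, correctly accounting for the factor of $2$ via the Brauer obstruction rather than a coincidence of Euler characteristics.
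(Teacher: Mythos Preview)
Your approach is genuinely different from the paper's, and it has real gaps.

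The paper does not use any G\"ottsche--Kool--Laarakker universality. Instead, it uses deformation invariance of $e^{\vir}$ along the moduli of Enriques surfaces to move $Y$ into the \emph{Beauville locus}, the (dense) locus of Enriques surfaces for which $\pi^*B_Y=0$ on the K3 cover $\tilde Y$. Only there does one have $\pi^*\mathcal{A}\cong\End(\mathcal{O}_{\tilde Y}\oplus L)$; your parenthetical ``since $\pi^*B_Y=0$'' is false for a general Enriques surface, and this is precisely the subtlety the paper is built around. On the Beauville locus, Reede's theorem identifies $M^H_{\mathcal{A}/Y}(v_{\mathcal{A}})$ as an \'etale double cover of $\mathrm{Fix}(\sigma)\subset M^{\pi^*H}_{\tilde Y}(\bar v)$, where $\sigma(F)=\iota^*F\otimes L$. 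The paper then computes $e(\mathrm{Fix}(\sigma))$ by the Lefschetz fixed-point formula, using Markman's monodromy operators to build a ring isomorphism $H^*(M_{\tilde Y}(\bar v))\cong H^*(\tilde Y^{[N]})$ intertwining $\sigma^*$ with the geometric involution $\iota^*$ on $\tilde Y^{[N]}$; since $\mathrm{Fix}(\iota^*)\cong Y^{[N/2]}$, one obtains $e(\mathrm{Fix}(\sigma))=e(Y^{[N/2]})$ and hence the factor $2$ from the double cover. The odd-$N$ vanishing falls out because $\dim M_{\tilde Y}(\bar v)\equiv 0\pmod 4$ on the Beauville locus, so the Lagrangian fixed locus has even dimension and $M$ is empty (hence $e^{\vir}=0$) when $N$ is odd.

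Your proposal breaks at the evaluation step. For even $N$ you want a test case with $M\cong Y^{[N/2]}$ via ``twisted ideal sheaves $\mathcal{A}\otimes I_Z$''. But $\mathcal{A}$ has rank $4$ as an $\mathcal{O}_Y$-module, so $\mathcal{A}\otimes I_Z$ has twisted rank $2$? Actually the deeper problem is that because $B_Y\neq 0$ there are \emph{no} twisted sheaves of rank $1$ at all; there is no twisted analogue of an ideal sheaf, and no Chern character for which $M$ becomes a Hilbert scheme. This is exactly why the paper passes to the K3 cover on the Beauville locus before comparing with a Hilbert scheme. Secondly, even granting a twisted GKL machine, your reduction to a function of $N$ alone is not justified: the universal expression would a priori depend on $c_1^2$ and $\Ch_2$ separately, not only through the combination giving $N$; you would still need an argument (such as the paper's Markman-operator transport) to eliminate the extra variable.
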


Up until this moment, the enumerative study of moduli of twisted sheaves has been on the \emph{topologically trivial} case, which means that the image of the Brauer class in $H^3(X, \mathbb{Z})$ vanishes~\cite{HotchkissPerry, BGJK, Jiang2022, JiangKool}, in particular in the study of S-duality. This assumption leads to the availability of technical tools, most notably the B-field (see e.g.~\cite{HuybrechtsStellari}). In this paper, we break new ground by doing computations in the case where the Brauer class does not vanish in $H^3(X, \mathbb{Z})$.

\subsection{Background} 
Moduli of sheaves on Enriques surfaces have always been related to moduli of sheaves on the K3 cover, since they were studied by H. Kim~\cite{Kim_1998}. Other references for the untwisted theory are Nuer and Yoshioka~\cite{Nuer2015, Nuer2020, Yoshioka2018}. Recently, Oberdieck has worked on the enumerative geometry of Enriques surfaces. His work encompasses Gromov-Witten, Vafa-Witten and Pandharipande-Thomas invariants, among others~\cite{oberdieck2023curvecountingenriquessurface, OberdieckRefinedI, OberdieckRefinedII}. He obtained closed formulas for the Gromov-Witten invariants and Vafa-Witten invariants. We would like to mention that among his results is the same formula~\eqref{eq:main-result-intro}, but then for untwisted moduli spaces including both higher rank cases and the cases where there are strictly semi stable sheaves. 

Moduli of twisted sheaves were first considered by C\v{a}ld\v{a}raru~\cite{Caldararu2000}, Yoshioka~\cite{Yoshioka2006}, Lieblich~\cite{Lieblich2007} and Hoffmann-Stuhler~\cite{hoffmann_2005_moduli}. Recently, Reede~\cite{reede2019ranksheavesquaternionalgebras, reede2024enriquessurfacestrivialbrauer} has worked on moduli of twisted sheaves on Enriques surfaces specifically. He showed that the results of Kim hold in the twisted setting quite generally. We will use his work, and earlier work by Beauville~\cite{beauville2009brauergroupenriquessurfaces} on the Brauer group of an Enriques surface.

The enumerative geometry of twisted sheaves is important for the S-duality conjecture of Vafa and Witten~\cite{Vafa1994} as formulated in~\cite{Jiang2022, JiangKool, vBThesis}. Witten~\cite{witten_1998_adscft} modified the conjecture for surfaces with $H^1(Y, \mathbb{Z}) \neq 0$, such as Enriques surfaces, where $H^1(Y, \mathbb{Z}) \cong \mathbb{Z}/2\mathbb{Z}$. The conjecture involves the Euler characteristic of the moduli space of twisted sheaves with fixed \emph{determinant}. This requirement is more refined than our result, since for any line bundle $L$, $c_1(L)$ and $c_1(L \otimes \omega_Y)$ are the same modulo torsion. If the rank is odd, then the two moduli spaces with fixed determinant $L$ and $L\otimes \omega_Y$ are isomorphic. It is shown in \cite{beckmann_2020_birational} that for a primitive
Mukai vector of odd rank $v$, we have $M^H_Y(v)$ is birational to $Y^{[N/2]}$ with $N$ being the dimension of $M^H_Y(v)$. Therefore, in this case we have  $e(M^H_Y(v))=e(Y^{[N/2]})$. 

Recall also that the S-duality conjecture exists for each rank $r$. Such a conjecture involves sheaves and twisted sheaves of rank $r$, and Brauer classes whose order divides $r$. Currently, S-duality is studied mostly when $r$ is prime. However, for an Enriques surface, the conjecture only involves nontrivial Brauer classes when $r = 2$ (assuming $r$ is prime). Therefore, we believe that rank two twisted sheaves are the most significant case to study.

\subsection{Idea of proof and plan of the paper}

Our proof of Theorem~\ref{maintheorem} has two main ideas. The first is to deform our Enriques surface $Y$ to a more suitable one, where the moduli space of twisted sheaves is smooth and the Brauer class $B_Y$ of the Enriques surface vanishes on the K3 cover $\tilde{Y}$. Work of Reede~\cite{reede2024enriquessurfacestrivialbrauer} shows that the moduli of twisted sheaves can be realised as the fixed point locus of $\iota^*$ acting on the moduli of twisted sheaves $\tilde{Y}$ (recall that $\iota$ is the fixed-point-free action on $\tilde{Y}$). 

On $\tilde{Y}$, the moduli spaces of twisted sheaves and untwisted sheaves are isomorphic and there is also an action of $\iota^*$ on the space of untwisted sheaves, the fixed point locus of which is the moduli of untwisted sheaves. However, the isomorphism and the actions of $\iota^*$ are incompatible. For this reason, we seek a solution to this problem that is less geometric. Thus, the second main idea is to use the Lefschetz fixed point theorem to compute the Euler characteristic. This approach is inspired by~\cite[App. C]{oberdieck2023curvecountingenriquessurface}. In this approach, we only need to find an isomorphism of cohomology that respects the action of $\iota^*$, not of the moduli spaces themselves.

The desired isomorphism is provided by Markman operators~\cite{markman2005monodromy, Oberdieck_2022, wang2024virasoroconstraintsk3surfaces}. In fact, we can identify the cohomology of the moduli of twisted sheaves on $\tilde{Y}$ with the Hilbert scheme of points on $\tilde{Y}$, which essentially gives us the Euler characteristic of the Hilbert scheme on $Y$. The factor 2 in the theorem stems from the fact that we get the Euler characteristic of the moduli of rank one sheaves with $c_1 = 0$ modulo torsion, which means both determinant $\mathcal{O}_Y$ and $\omega_Y$, giving us two copies of the Hilbert scheme.

We will first recall the theory of moduli of twisted sheaves in Section~\ref{sec:twisted-sheaves}. We will specialise to Enriques surfaces in Section~\ref{sec:twisted-enriques} and study the deformation theory of Enriques surfaces. We then introduce Markman operators in Section~\ref{sec:markman-operators} before doing the final computation in Section~\ref{sec:top-eul-char}.

\subsection{Future directions}
Much of the structure of moduli spaces of twisted sheaves on Enriques surfaces is still unknown. The following questions arose in the preparation of this paper. We already mentioned the problem of fixing the determinant instead of the $c_1$ modulo torsion when discussing S-duality. We mention two problems.
\begin{enumerate}
    \item Is the moduli space of twisted sheaves always smooth of the expected dimension?
    \item If the moduli space of twisted sheaves has odd virtual dimension, is it necessarily empty?
\end{enumerate}
Reede~\cite{reede2024enriquessurfacestrivialbrauer} showed that in the moduli spaces of Enriques surfaces, there is an infinite union of divisors where the answer to both questions is ``yes''. In fact, this union is the Beauville locus, Def.~\ref{def:beauville-locus}. However, we are not aware of examples where pathological behaviour occurs.

Of course, one can also ask whether it is possible to compute more refined invariants, such as integrals of polynomials in descendent classes. Currently, our methods do not allow this. While our deformation technique still works, the Lefschetz fixed point formula computes only the Euler characteristic. It would be interesting to relate descendent classes on the Enriques surface to descendent classes on the K3 cover.

\subsection{Notation and conventions}

We will always work over the complex numbers. Furthermore, all cohomology groups will be the singular cohomology groups in the complex analytic topology, unless otherwise stated.

\subsection{Acknowledgments}
The authors would like to thank their supervisor Martijn Kool for many useful suggestions. The authors express their gratitude to Woonam Lim, Fabian Reede and Georg Oberdieck for useful conversations.
D.vB. is supported by NWO grant VI.Vidi.192.012.
W.W. is supported by the ERC Consolidator Grant FourSurf 101087365.

\section{Twisted sheaves and invariants} \label{sec:twisted-sheaves}

In this section $X$ will be a smooth projective surface over the complex numbers. Later, we will specialise $X$ to be an Enriques surfaces $Y$, and we will always work over the complex numbers.

\begin{definition}
    An \emph{Azumaya algebra} $\mathcal{A}$ on $X$ is a sheaf of algebras that locally (in the \'etale topology) is isomorphic to a matrix algebra $M_r(\mathcal{O}_X)$. The integer $r$ is called the \emph{degree} of $\mathcal{A}$.
\end{definition}

\begin{example}
\label{ex:trivial-algebra}
    Trivially, for any vector bundle $E$, $\End(E^\vee)$ is an Azumaya algebra. Indeed, the required condition already holds Zariski locally. More is known: the vector bundle $E$ is uniquely determined up to tensoring with a line bundle on $X$. In other words, any other such $E$ is of the form $E \otimes L$. The Azumaya algebras of this form are called \emph{trivial} because these algebras are precisely those whose image in the Brauer group (see below) is zero.
\end{example}

\begin{definition}
\label{def:brauer-group}
    The Brauer group $\Br(X)$ of a smooth projective surface is defined as $H^2_{\text{\'et}}(X, \mathbb{G}_m)$. Here, the cohomology group is with respect to the \'etale topology.
\end{definition}

Any Azumaya algebra $\mathcal{A}$ on $X$ induces an element of $\Br(X)$. The most concrete way to see this is the following. By the Skolem-Noether theorem, the set of isomorphism classes of Azumaya algebras of degree $r$ is the same as the set of isomorphism classes of $\PGL_r$-bundles \cite{vBThesis}. The short exact sequence $0 \to \mathbb{G}_m \to \GL_r \to \PGL_r \to 0$ then induces a connecting homomorphism $H^1(X, \PGL_r) \to H^2_{\text{\'et}}(X, \mathbb{G}_m) = \Br(X)$ which associates to each $\PGL_r$-bundle an Azumaya algebra.

This particular map has been the topic of much study. In~\cite{De_Jong_result_of_gabber} it is shown that this map is surjective as soon as $X$ is a quasi-projective variety of any dimension. However, for surfaces, a much better result is known, which is De Jong's Period-Index theorem for surfaces~\cite{Jong2004}.

\begin{theorem}
\label{thm:de-jong-period-index}
    For any smooth projective surface $X$ and $\alpha \in \mathrm{Br}(X)$ of order $r$, there is a degree $r$ Azumaya algebra of class $\alpha$.
\end{theorem}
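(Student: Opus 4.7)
The plan is to reduce the statement to de Jong's period-equals-index theorem for the function field $K(X)$ and then globalize the resulting central simple algebra to a sheaf of Azumaya algebras on $X$. Concretely, I would proceed in four stages: restrict to the generic point, apply the function-field period-index theorem, spread out to a dense open, and finally extend across the boundary divisor using maximal orders.

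First, since $X$ is regular, the restriction map $\Br(X) \to \Br(K(X))$ is injective (Auslander--Goldman), so the image $\alpha_\eta$ of $\alpha$ at the generic point again has order exactly $r$. Next, the function field $K(X)$ is the function field of a surface over the algebraically closed field $\mathbb{C}$, and de Jong's theorem on period-index equality in this setting produces a central simple $K(X)$-algebra $A$ of degree $r$ (i.e.\ $\dim_{K(X)} A = r^2$) whose Brauer class is $\alpha_\eta$. By choosing any coherent $\mathcal{O}_X$-subalgebra $\mathcal{A}_0 \subset A$ generating $A$ over $K(X)$, there is a dense open $U \subset X$ on which $\mathcal{A}_0|_U$ is locally free of rank $r^2$ and étale-locally a matrix algebra, hence an Azumaya algebra of degree $r$ representing $\alpha|_U$.

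The main step, and the place where dimension two is crucial, is extending this Azumaya algebra from $U$ across the complement $Z = X \setminus U$ to all of $X$. My plan is to replace $\mathcal{A}_0$ by a maximal $\mathcal{O}_X$-order $\Lambda \subset A$. Standard results on maximal orders over regular two-dimensional bases show that $\Lambda$ is reflexive, hence locally free (since $X$ is a smooth surface, reflexive coherent sheaves of constant rank are vector bundles away from a codimension-two locus — and on a surface that locus is finite). Moreover, over a regular local ring of dimension one the maximal order is hereditary, and one can use a finite sequence of elementary transforms (ramification-reducing modifications of hereditary orders at height-one primes) to arrange that $\Lambda$ is unramified at every codimension-one point. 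Once $\Lambda$ is locally free and unramified in codimension one, Azumaya-ness is an open condition that holds in codimension one, and on a smooth surface this forces $\Lambda$ to be Azumaya everywhere. Since $\Lambda$ has generic fiber $A$, its Brauer class is $\alpha$ and its degree is $r$.

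The hard part is certainly the last stage: controlling the ramification of the maximal order along $Z$ and showing that elementary transforms actually terminate and produce an honest Azumaya algebra (rather than just a hereditary order). This is the heart of de Jong's argument and uses the two-dimensionality of $X$ in an essential way — the analogous statement fails, or is open, in higher dimensions. The first three stages are essentially formal consequences of the injectivity of $\Br(X)\hookrightarrow \Br(K(X))$ and of de Jong's period-index theorem at the generic point; the geometric content is concentrated in the extension step.
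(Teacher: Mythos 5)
The paper does not prove Theorem~\ref{thm:de-jong-period-index} at all: it is quoted verbatim from de Jong~\cite{Jong2004} as an external input, so there is no internal argument to compare against. Judged on its own terms, your reduction is essentially the standard one and is correct in outline: injectivity of $\Br(X)\hookrightarrow \Br(K(X))$ for $X$ regular integral, de Jong's period--index equality over the function field to produce a degree-$r$ division algebra $D$ with generic class $\alpha$, and then a maximal $\mathcal{O}_X$-order $\Lambda\subset D$, which is reflexive hence locally free of rank $r^2$ on a smooth surface and is Azumaya by Auslander--Goldman purity once it is Azumaya in codimension one.

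Two corrections to your account of where the weight lies. First, the elementary-transform step is not needed and is where your sketch is slightly confused: since $\alpha$ lies in $\Br(X)$ and not merely in $\Br(K(X))$, it is unramified at every codimension-one point, and a maximal order in a central simple algebra whose class is unramified over a discrete valuation ring is automatically Azumaya there (check after completion, where the algebra is $M_m(D_x)$ with $D_x$ unramified). Elementary transforms of hereditary orders are a tool for manipulating \emph{ramified} classes; here there is nothing to remove. Second, and consequently, the extension step is \emph{not} ``the heart of de Jong's argument'' --- it is classical Auslander--Goldman theory of maximal orders over regular two-dimensional rings. The genuinely hard content, and the actual subject of~\cite{Jong2004}, is the function-field period--index equality that you invoke as a black box in your second stage. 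So your proof is an honest (and correct) derivation of the geometric statement \emph{from} de Jong's theorem, not a proof of de Jong's theorem; since the paper uses the statement in exactly that black-box fashion, this is a perfectly adequate justification for the form in which Theorem~\ref{thm:de-jong-period-index} is used here.
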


From now on, we fix an Azumaya algebra $\mathcal{A}$. By \cite{Jong2004}, we can assume that our Azumaya algebra is unobstructed, i.e., if $Y$ is contained in any smooth family, then $\mathcal{A}$ extends \'etale locally around $Y$. More precisely: the Lemma 3.2 in \cite{Jong2004} allows one to extend the Azumaya algebra along an embedding whose ideal sheaf has square zero. Then, as the Lemma 6.1 in \cite{Jong2004} shows, by the Grothendieck's existence theorem and the Artin's approximation theorem one can extend the Azumaya algebra to an \'etale neighborhood.

In this paper, we define twisted sheaves using Azumaya algebras. Other models are also available, notably Yoshioka's $Y$-sheaves on a Brauer-Severi variety $Y$ over $X$~\cite{Yoshioka2006} and Lieblich's moduli of twisted sheaves on a gerbe $\mathcal{G}$ over $X$~\cite{Lieblich2007}. For an extensive comparison between these models, see~\cite{vBThesis}.

\begin{definition}
    A \emph{twisted sheaf} is a (left) $\mathcal{A}$-module. Generally, we will only consider coherent $\mathcal{A}$-modules.
\end{definition}

\begin{example}
    \label{ex:trivial-algebra-modules}
    Consider the trivial algebra $\mathcal{A} = \End(E^\vee)$. Then every $\mathcal{A}$-module $M$ is of the form $M = F \otimes E^\vee$. In other words, the assignment $F \mapsto F \otimes E^\vee$ is an equivalence from the category of coherent $\mathcal{O}_X$-modules to that of coherent twisted sheaves. Note that the equivalence is not unique: one can replace $E$ by $E \otimes L$, which changes the equivalence by a line bundle. This equivalence is called Morita equivalence, one can refer to Chapiter 3 in \cite{vBThesis} for more details. 
\end{example}

Much of the moduli theory of twisted sheaves can be developed in a similar way to the moduli theory of ordinary sheaves. An extra obstacle is that twisted sheaves do not admit a natural notion of global sections. We therefore introduce two different notions of the Hilbert polynomial of twisted sheaves: one defined using Hirzebruch-Riemann-Roch and one using the underlying $\mathcal{O}_X$-module of a twisted sheaf.

\begin{definition}
    The twisted Chern character of a twisted sheaf $E$ is defined as $\Ch_{\mathcal{A}}(E) = \Ch(E) / \sqrt{\Ch(\mathcal{A})}\in H^\ast(X,\Q)$. The associated twsited Mukai vector is $v_{\mathcal{A}}(E) = \Ch_{\mathcal{A}}(E)\sqrt{\mathrm{Td}_X}$. Fix a polarisation $H$ of $X$. The twisted Hilbert polynomial $P(E)$ is defined as \[
    P_{\mathcal{A}}(E)(n) = \int_X \Ch_{\mathcal{A}}(E) e^{c_1(H)n} \mathrm{Td}_X.
    \]
    Finally, we denote by $P(E)$ the Hilbert polynomial of the underlying $\mathcal{O}_X$-module $E$.
\end{definition}

We can now define $\mu$-stability and Gieseker stability in the usual way. However, since we have two different Hilbert polynomials, we have two different sets of notions of stability. Under the usual gcd condition, these all coincide\footnote{In~\cite{vBThesis} it is stated that the notions of stability always coincide, which is false. However, the gcd condition is assumed there in all important results.}.

\begin{remark}
\label{rk2onlystable}
    Later, we will consider an Azumaya algebra of degree two that induces a nontrivial class in the Brauer group. In this case, the stability condition for twisted sheaves of rank two is automatic, since twisted sheaves of rank 1 do not exist.
\end{remark}

Before we continue, we have to introduce the notion of determinants of twisted sheaves and some other supporting notions.

\begin{definition}
    Let $M$ be a twisted sheaf. We will denote the degree $i$ part of $\Ch_{\mathcal{A}}(M)$ by $\Ch_{\mathcal{A},i}(M)$. Then the \emph{rank} of $M$ is $\rk_\mathcal{A}(M):=\Ch_{\mathcal{A},0}(M)$. Alternatively, it is $\rk_\mathcal{A}(M) = \rk M / \sqrt{\rk \mathcal{A}}$ where $\rk(\bullet)$ means the rank of $\mathcal{O}_X$-module. $\rk_\mathcal{A}(M)$ is always an integer.
\end{definition}

\begin{proposition}
\label{prp:twisted-determinant}
    Let $M$ be a twisted sheaf of rank $k$. And Let $\mathcal{A}$ be a Azumaya algebra on $X$ of degree $r$.
    \begin{enumerate}
        \item There exists a rank 1 locally projective $\mathcal{A}^{\otimes k}$-module $\det_\mathcal{A} M$.
        \item If $L$ is any line bundle on $X$, then $\det_\mathcal{A}(M \otimes L) = \det_\mathcal{A} M \otimes L^{\otimes k}$.
        \item If $k$ is a multiple of $r$, then we can define a map \[\widehat{\det}:\Coh(X,\mathcal{A})\to \Pic(X),\] such that:\\
        \quad (i) $\det_\mathcal{A}(M)=\widehat{\det}(M)\otimes \det_\mathcal{A} \mathcal{A}^{\oplus d}$ \\
        \quad (ii)  $c_1(\widehat{\det}(M))=\Ch_{\mathcal{A},1}(M)$.
    \end{enumerate}
\end{proposition}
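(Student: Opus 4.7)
The plan is to construct $\det_\mathcal{A}$ by \'etale descent from the Morita picture of Example~\ref{ex:trivial-algebra-modules}, then normalise in the case $r \mid k$. I would choose an \'etale cover $\{U_\alpha \to X\}$ trivialising $\mathcal{A}\vert_{U_\alpha} \cong \End(E_\alpha^\vee)$ with $E_\alpha$ of rank $r$; by Morita any twisted rank $k$ module has the local shape $M\vert_{U_\alpha} = F_\alpha \otimes E_\alpha^\vee$ with $F_\alpha$ of $\mathcal{O}$-rank $k$, and set
\[
    \det_\mathcal{A}(M)\vert_{U_\alpha} \;:=\; \det(F_\alpha) \otimes (E_\alpha^{\otimes k})^\vee,
\]
which is a rank $1$ locally projective module over $\mathcal{A}^{\otimes k}\vert_{U_\alpha} = \End((E_\alpha^{\otimes k})^\vee)$. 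For part~(1) it then remains to glue: a change of trivialisation $E_\alpha \mapsto E_\alpha \otimes L$ replaces $F_\alpha$ by $F_\alpha \otimes L^{-1}$ and $E_\alpha^{\otimes k}$ by $E_\alpha^{\otimes k} \otimes L^{\otimes k}$, and the two effects on $\det(F_\alpha) \otimes (E_\alpha^{\otimes k})^\vee$ cancel, so that the local pieces assemble into a global rank $1$ locally projective $\mathcal{A}^{\otimes k}$-module.

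Part~(2) is then immediate from the local formula: $M \otimes L$ has presentation $F_\alpha \otimes L \otimes E_\alpha^\vee$, so $\det(F_\alpha \otimes L) = \det(F_\alpha) \otimes L^{\otimes k}$ yields $\det_\mathcal{A}(M \otimes L) = \det_\mathcal{A}(M) \otimes L^{\otimes k}$ after gluing. For~(3), when $k = dr$ one has $k[\mathcal{A}] = 0$ in $\Br(X)$ because the order of $[\mathcal{A}]$ divides $r$, so $\mathcal{A}^{\otimes k}$ is Morita-trivial: $\mathcal{A}^{\otimes k} \cong \End(\tilde E^\vee)$ for some rank $r^k$ bundle $\tilde E$ on $X$, unique up to a line bundle twist. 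Morita matches rank $1$ $\mathcal{A}^{\otimes k}$-modules with line bundles on $X$, but only modulo this ambiguity; I would kill it by normalising against $\mathcal{A}^{\oplus d}$ (which has twisted rank $k$), setting
\[
    \widehat{\det}(M) \;:=\; \hom_{\mathcal{A}^{\otimes k}}\bigl(\det_\mathcal{A}(\mathcal{A}^{\oplus d}),\, \det_\mathcal{A}(M)\bigr),
\]
which is manifestly a line bundle, independent of the trivialisation of $\tilde E$, and tautologically satisfies~(i).

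For~(ii) I would compute Chern classes locally. Since $\mathcal{A}$ is \'etale-locally a matrix algebra one has $c_1(\mathcal{A}) = 0$; writing $M\vert_{U_\alpha} = F_\alpha \otimes E_\alpha^\vee$ gives $c_1(M) = r\, c_1(F_\alpha) - k\, c_1(E_\alpha)$, while the Morita image of $\mathcal{A}^{\oplus d}$ is $E_\alpha^{\oplus d}$ with $c_1(\det E_\alpha^{\oplus d}) = d\, c_1(E_\alpha)$. Hence
\[
    c_1(\widehat{\det}(M)) = c_1(F_\alpha) - d\, c_1(E_\alpha) = c_1(M)/r + (k/r - d)\, c_1(E_\alpha) = c_1(M)/r
\]
since $k = dr$, which matches $\Ch_{\mathcal{A},1}(M) = c_1(M)/r$ read off from $\Ch(M)/\sqrt{\Ch(\mathcal{A})}$ once $c_1(\mathcal{A}) = 0$. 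The main obstacle I expect is the bookkeeping in~(1): the local Morita equivalences are only canonical up to a line bundle twist, and one must verify that the formula $\det(F_\alpha) \otimes (E_\alpha^{\otimes k})^\vee$ is insensitive to \emph{all} such twists simultaneously, so that the $\mathcal{A}^{\otimes k}$-module structure descends cleanly; once this cocycle check is in place, (2) and (3) follow by direct computation as above.
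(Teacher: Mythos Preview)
Your proposal is correct. For parts (1) and (2) your \'etale-local Morita construction with gluing via invariance under $E_\alpha \mapsto E_\alpha \otimes L$ is exactly what the paper does: the paper writes the formula $\det F \otimes (E^\vee)^{\otimes k}$ in the trivial case, remarks that it is well-defined under this twist, and leaves the descent implicit. For part (3), however, you take a genuinely different route. You normalise by the internal hom
\[
\widehat{\det}(M) \;=\; \hom_{\mathcal{A}^{\otimes k}}\bigl(\det_\mathcal{A}(\mathcal{A}^{\oplus d}),\, \det_\mathcal{A}(M)\bigr),
\]
which makes (i) a tautology and keeps the argument entirely within sheaves on $X$. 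The paper instead passes to the $\mathbb{G}_m$-gerbe $p\colon \mathfrak{X} \to X$ on which $p^*\mathcal{A} = \End(E^\vee)$ is \emph{globally} trivial for a weight-$1$ bundle $E$, writes $p^*M = F \otimes E^\vee$, and sets $\widehat{\det}(M) := p_*\bigl(\det(F) \otimes (\det E^{\oplus d})^{-1}\bigr)$, checking that the weights cancel so that the pushforward lands in $\Pic(X)$; property (ii) is then read off from the isomorphism $A(\mathfrak{X})_{\mathbb{Q}} \cong A(X)_{\mathbb{Q}}$. Your approach is more elementary and makes (i) immediate; the gerbe approach buys a single global trivialisation that absorbs precisely the ``cocycle bookkeeping'' you correctly flag as the main obstacle in (1). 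One small correction: your justification that $c_1(\mathcal{A}) = 0$ because $\mathcal{A}$ is \'etale-locally a matrix algebra is not an argument (line bundles are also \'etale-locally trivial); the right reason is that the reduced trace form gives $\mathcal{A} \cong \mathcal{A}^\vee$, so $2c_1(\mathcal{A}) = 0$.
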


\begin{proof}
    Suppose that $\mathcal{A}$ is a trivial algebra $\End(E^\vee)$ (see Example~\ref{ex:trivial-algebra}). Then $M$ is of the form $F \otimes E^\vee$ (Example~\ref{ex:trivial-algebra-modules}). We define $\det M = \det F \otimes (E^\vee)^k$. Note that this is well-defined: it is invariant under replacing $E$ by $E \otimes L$. This is why we have to use $(E^\vee)^{\otimes k}$. It is immediate to verify the second property from this definition. 

    Let us assume $k=dr$. Let $p:\mathfrak{X}\to X$ be the $\mathbb{G}_m$-gerbe $\alpha(X)$ defined in \cite{vBThesis}. Then we have $\Tilde{\mathcal{A}}:=p^\ast \mathcal{A}=\End(E^\vee)$, where $E$ is a rank $r$ twisted sheaf, i.e. $E$ has weight $1$ in the decomposition 
    \[Q\Coh(\mathfrak{X}) = \prod _{i\in \Z} Q\Coh^i(\mathfrak{X}).\]
    We also have: $\Tilde{M}:=p^\ast M= F \otimes E^\vee$ where $F$ is a rank $k$ sheaf on $\mathfrak{X}$ of weight $1$. We define $\widehat{\det}$ as:
    \[\widehat{\det}(M):=p_\ast\left(\det (F)\otimes (\det (E^{\oplus d}))^{-1}\right),\] where $\det$ is the ordinary determinant on $\mathfrak{X}$. Notice that the weight of $\det F$ is $k$, the weight of $(\det E^{\oplus d})^{-1}$ is $-k$ and the tensor of those two has weight $0$, therefore the pushforward by $p_\ast$ gives an ordinary sheaf on $X$.

    Notice that $\det_\mathcal{A}\mathcal{A}^{\oplus d}=p_\ast\left( \det(E^{\oplus d})\otimes (E^\vee)^{\otimes k} \right)$, (i) is obvious.
    On the level of Chow ring we have $A(\mathfrak{X})_\Q\cong A(X)_\Q$ and the inverse of $p^\ast$ is $rp_\ast$, using this fact (ii) is obtained by direct calculation.
\end{proof}

\begin{proposition}
\label{MoritaEq}
    Let $X$ be a surface with polarisation $H$. Let $\Ch_{\mathcal{A}} = r + c_1 + \Ch_2$ be a cohomology class on $X$ satisfying $\gcd(r, c_1.H) = 1$ and $r$ is a multiple of $\deg \mathcal{A}$. Then:
    \begin{enumerate}
        \item For twisted sheaves with twisted Chern character $\Ch_{\mathcal{A}}$, the notions of $\mu$-(semi)-stability, Gieseker-(semi)-stability with respect to the twisted Hilbert polynomial and ordinary Hilbert polynomial all agree.
        \item If $\mathcal{A} = \End(E^\vee)$ represents the trivial class in $\Br(X)$ then the equivalence $F \mapsto F \otimes E^\vee$ preserves the stability condition.
    \end{enumerate}
\end{proposition}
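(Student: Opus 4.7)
The plan is to reduce both statements to comparisons of $\mu$-slopes, with the gcd condition $\gcd(r,c_1\cdot H)=1$ doing the heavy lifting.

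For part (1), I first observe that any Azumaya algebra has $c_1(\mathcal{A})=0$: the $\PGL_r$-transitions act on $M_r$ by conjugation, which has determinant one on the $r^2$-dimensional matrix space, so $\det\mathcal{A}=\mathcal{O}_X$. Writing $r=\deg\mathcal{A}$, the class $\sqrt{\Ch(\mathcal{A})}$ therefore has no degree-$2$ component, and a short computation gives $\rk_\mathcal{A}(E)=\rk(E)/r$ and $\Ch_{\mathcal{A},1}(E)=c_1(E)/r$, whence $\mu_\mathcal{A}(E)=\mu(E)$. Expanding the reduced Hilbert polynomials $P(E)/\rk(E)$ and $P_\mathcal{A}(E)/\rk_\mathcal{A}(E)$, the leading $n^2$-coefficients agree and the $n^1$-coefficients differ only by a constant independent of $E$. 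Hence comparing any two sheaves produces the same strict inequality at the $n^1$-level in both the twisted and the untwisted Gieseker senses.

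The rest of (1) is the classical dichotomy under the gcd assumption. For a proper saturated $E'\subsetneq E$ with $0<\rk E'<\rk E$, the equality $\mu(E')=\mu(E)$ would force $\rk(E)$ to divide $\rk(E')\cdot(c_1(E)\cdot H)$, contradicting $\gcd(r,c_1\cdot H)=1$; and if $\rk E'=\rk E$ with $E'\ne E$, then $E/E'$ is torsion, so $P(E)-P(E')$ is a non-zero polynomial of degree $\le 1$, which is positive for $n\gg 0$. Combining these, $\mu$-stable $\iff$ $\mu$-semistable $\iff$ Gieseker-stable $\iff$ Gieseker-semistable, both in the ordinary and in the twisted sense.

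For part (2), with $\mathcal{A}=\End(E^\vee)$ the Morita equivalence $F\mapsto F\otimes E^\vee$ is exact and matches $\mathcal{O}_X$-subsheaves of $F$ with $\mathcal{A}$-subsheaves of $F\otimes E^\vee$. A direct Chern-character computation yields
\[
\Ch_\mathcal{A}(F\otimes E^\vee)=\Ch(F)\cdot e^{-c_1(E)/r}, \quad\text{hence}\quad \mu_\mathcal{A}(F\otimes E^\vee)=\mu(F)-\frac{c_1(E)\cdot H}{r}.
\]
Since the shift is independent of $F$, the Morita bijection preserves the ordering of $\mu$-slopes on subsheaves. Combined with part (1), which collapses every stability notion to $\mu$-stability, this proves that stability is preserved under Morita.

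The main obstacle is the bookkeeping of the twisted Chern character---checking that $c_1(\mathcal{A})=0$ forces the twisted and untwisted $\mu$-slopes to coincide, and computing the constant shift produced by tensoring with $E^\vee$. Once these identifications are in place, the standard gcd argument finishes everything.
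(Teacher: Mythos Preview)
Your argument is correct and follows essentially the same route as the paper: both proofs first observe that $c_1(\mathcal{A})=0$ forces the twisted and untwisted $\mu$-slopes to coincide, then invoke the standard gcd argument to collapse all stability notions, and for part (2) both compute $\Ch_{\mathcal{A}}(F\otimes E^\vee)=\Ch(F)\cdot e^{-c_1(E)/\deg\mathcal{A}}$ and note that a uniform slope shift preserves $\mu$-stability. One caveat: you silently redefine $r$ to mean $\deg\mathcal{A}$, whereas in the statement $r$ denotes the twisted rank (which is only assumed to be a \emph{multiple} of $\deg\mathcal{A}$); this creates some friction in your divisibility argument, so it would be cleaner to keep the two symbols separate.
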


\begin{proof}
    In this case, we have that for a twisted sheaf $E$, we have $\frac{\Ch_{\mathcal{A},1}(E)}{r} = \frac{\Ch_1(E)}{\rk E}$, so the notion of $\mu$ does not depend on the definition of Hilbert polynomial that we consider. This usual argument then implies that $\mu$-stability and $\mu$-semi-stability coincide when we take the ordinary Hilbert polynomial in our definition. But then all the notions agree.

    For the second claim we compute that
    \begin{equation}
    \label{eq:chern-relation}
    \Ch_{\mathcal{A}}(F \otimes E^\vee) = \frac{\Ch(F \otimes E^\vee)}{\sqrt{\Ch(\End(E^\vee))}} = \Ch(F) \cdot e^{-c_1(E)/r}.
    \end{equation}
    This change of Chern character does not affect $\mu$-stability, the proof is the same as the proof that $\mu$-stability is invariant under tensoring by a line bundle.
\end{proof}

\begin{remark}
    Equation~\eqref{eq:chern-relation} uses the fact that $\dim X \leq 2$. Our definition of $\Ch_{\mathcal{A}}$ needs to be modified if we consider varieties of higher dimension, see~\cite{vBThesis} for a treatment using gerbes.

    We also remark that the second part of this proposition also works if $\mathcal{B}$ is any Azumaya algebra, and $\mathcal{A} = \mathcal{B} \otimes \End(E)$ is a Brauer-equivalent Azumaya algebra. In this way, we obtain that in general, the notion of stability does not depend on the choice of Azumaya algebra (assuming the gcd condition).
\end{remark}

Having introduced stability, we now discuss the moduli theory of twisted sheaves. For the absolute moduli space of generically simple twisted sheaves one can refer \cite{hoffmann_2005_moduli}. In the following, we will need the existence of relative moduli spaces, so we will discuss the general version from~\cite{vBThesis}.

\begin{theorem}
\label{thm:moduli-space-existence}
    Let $\mathcal{X} \to S$ be a family of smooth projective surfaces and fix a relative polarisation $\mathcal{H}$. Assume also that $\mathcal{A}$ is an Azumaya algebra on $\mathcal{X}$. Let $r$ be an integer that is a multiple of $\deg \mathcal{A}$, let $\mathcal{L}$ be a line bundle on $\mathcal{X}$ such that $gcd(r,c_1(\mathcal{L}|_s)\cdot \mathcal{H}|_s)=1\quad \forall s\in S$ and let $n \in \mathbb{Q}$. Then there exists a projective morphism $M^{\mathcal{H}}_{\mathcal{A}/\mathcal{X}/S}(r, \mathcal{L}, n) \to S$ which is the relative moduli space of stable twisted sheaves and whose fiber over $s \in S$ is the absolute moduli space of twisted sheaves of rank $r$ and with determinant $\widehat{\det}(E)=\mathcal{L}$ and $\Ch_{\mathcal{A},2}(E)=n$ where $E$ is a twisted sheaf in this absolute moduli space.
\end{theorem}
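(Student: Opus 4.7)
The plan is to bootstrap the absolute construction of moduli of (generically simple) twisted sheaves of Hoffmann--Stuhler~\cite{hoffmann_2005_moduli} and~\cite{vBThesis} to the relative setting $\mathcal{X} \to S$ by performing the usual Simpson-style GIT on a relative Quot scheme, and then to carve out the locus where the twisted determinant equals $\mathcal{L}$ using the morphism $\widehat{\det}$ of Proposition~\ref{prp:twisted-determinant}.

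First I would establish relative boundedness: since $r$, $\widehat{\det}|_s$ and $\Ch_{\mathcal{A},2}$ are locally constant on $S$, a uniform Kleiman-type bound gives an integer $m_0$ such that for every $s \in S$ and every stable $\mathcal{A}_s$-module $E_s$ with the prescribed invariants, the twist $E_s(m)$ is globally generated and has vanishing higher cohomology for $m \geq m_0$, with $N := \dim H^0(E_s(m))$ independent of $s$. This is where the gcd hypothesis enters: by Proposition~\ref{MoritaEq} every stable twisted sheaf under consideration is also Gieseker-stable with respect to the ordinary Hilbert polynomial, so Grothendieck's boundedness applies after étale-locally replacing $\mathcal{A}$ by a matrix algebra via Morita equivalence (Example~\ref{ex:trivial-algebra-modules}) and shifting the Chern character by~\eqref{eq:chern-relation}.

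Next I would form the relative Quot scheme $\mathcal{Q} = \Quot_{\mathcal{X}/S}(\mathcal{A}(-m_0)^{\oplus N}, P)$ of $\mathcal{A}$-linear quotients with Hilbert polynomial $P$; this is representable, since the $\mathcal{A}$-linearity condition cuts out a closed subscheme of the ordinary Grothendieck Quot scheme. On the open locus $\mathcal{Q}^{\mathrm{st}} \subset \mathcal{Q}$ of quotients whose target is stable and whose defining morphism is an isomorphism on $H^0$ after the twist, the group $\PGL_N$ acts with trivial stabilisers by the gcd assumption, and the relative GIT quotient $M := \mathcal{Q}^{\mathrm{st}}/\PGL_N \to S$ is a projective morphism representing stable twisted sheaves with the fixed numerical invariants. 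Proposition~\ref{prp:twisted-determinant} then produces a morphism $\widehat{\det} : M \to \Pic_{\mathcal{X}/S}$ in families -- the pushforward construction is patently relative when carried out on the associated $\mathbb{G}_m$-gerbe -- and $M^{\mathcal{H}}_{\mathcal{A}/\mathcal{X}/S}(r, \mathcal{L}, n)$ is defined as its fibre over $[\mathcal{L}]$; once $r$, $\widehat{\det}$ and $\mathcal{H}$ are fixed, fixing $\Ch_{\mathcal{A},2} = n$ selects a union of connected components.

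The main obstacle is properness of $M \to S$, for which one needs a valuative criterion in the twisted setting. The clean route is Langton's theorem adapted to $\mathcal{A}$-modules: given a flat family of stable twisted sheaves over the punctured spectrum of a DVR, one produces a limit by the iterated modification argument, carried out inside the abelian category $\Coh(\mathcal{X}, \mathcal{A})$ rather than $\Coh(\mathcal{X})$. Here the gcd hypothesis is essential a second time: it collapses semistability to stability, so no S-equivalence identification is needed and the Langton limit is automatically stable, as in the absolute case treated in~\cite{hoffmann_2005_moduli,vBThesis}.
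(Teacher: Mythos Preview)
The paper does not give a proof of this theorem: it is quoted as a known result from~\cite{vBThesis} (with the absolute case attributed to~\cite{hoffmann_2005_moduli}), and the text immediately following the statement only adds the remark that the Azumaya algebra may be assumed unobstructed and that the moduli spaces are coarse. There is therefore no ``paper's own proof'' to compare against.

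Your outline is the standard Simpson--Langton construction adapted to $\mathcal{A}$-modules, and it is essentially what one would expect to find in the cited reference. A couple of small points: the trivial $\PGL_N$-stabilisers come simply from stability (stable $\Rightarrow$ simple), not from the gcd hypothesis; the gcd hypothesis is rather what makes stable $=$ semistable, so that the GIT quotient of $\mathcal{Q}^{\mathrm{st}} = \mathcal{Q}^{\mathrm{ss}}$ is already projective and the Langton limit is automatically stable. Also, for boundedness you do not need to pass \'etale-locally to a matrix algebra: Proposition~\ref{MoritaEq}(1) already identifies twisted Gieseker stability with ordinary Gieseker stability of the underlying $\mathcal{O}_X$-module, so Grothendieck's boundedness applies directly on $X$. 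With these cosmetic adjustments your sketch is a correct account of the construction.
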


For this theorem we need an Azumaya algebra $\mathcal{A}$ on $\mathcal{X}$. However, recall that we noted that any Azumaya algebra on a single fibre could be assumed to be unobstructed \cite{Jong2004}. This has the consequence that we can fix an Azumaya algebra on a single fibre and then extend it to an \'etale neighbourhood of the corresponding point in $S$.

We also note that these moduli spaces are coarse in general.

\begin{theorem}[Theorem 3.2.83 in \cite{vBThesis}]
\label{thm:perfect-obs-th}
    In the setting of Theorem~\ref{thm:moduli-space-existence} assume $S$ is smooth and $\mathcal{X}\to S$ is smooth. Then there is a perfect relative perfect obstruction on $\mathcal{M}^{\mathcal{H}}_{\mathcal{A}/\mathcal{X}/S}(r, \mathcal{L}, n)$ (this is the moduli stack as a $\mu_l$-gerbe over $M^\mathcal{H}_{\mathcal{A}/\mathcal{X}/S}(r, \mathcal{L}, n)$ for some $l$) given by:
    \[R\Hom_\pi(\mathcal{E},\mathcal{E})_0^\vee[-1]\to \mathbb{L}_{\mathcal{M}^{\mathcal{H}}_{\mathcal{A}/\mathcal{X}/S}(r, \mathcal{L}, n)},\]
    where $\pi$ is the projection to the moduli stack and $\mathcal{E}$ is the universal sheaf. The subscript zero denotes the trace-free part. This perfect obstruction theory descends to $M^\mathcal{H}_{\mathcal{A}/\mathcal{X}/S}(r, \mathcal{L}, n)$ and commutes with base change. 
    
    In particular, let $E\in M^\mathcal{H}_{\mathcal{A}/\mathcal{X}/S}(r, \mathcal{L}, n)$ be a closed point on the fiber $X$ above $s\in S$ then the tangent space is given by $\Ext_X^1(E,E)_0$ and the obstruction space by $\Ext_X^2(E,E)_0$.
     
    Therefore, the moduli space $M^\mathcal{H}_{\mathcal{A}/\mathcal{X}/S}(r, \mathcal{L}, n)$ admits a virtual class, it commutes with regular local immersions and flat morphisms. 
\end{theorem}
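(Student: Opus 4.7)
The plan is to construct the relative perfect obstruction theory on the moduli stack $\mathcal{M} = \mathcal{M}^\mathcal{H}_{\mathcal{A}/\mathcal{X}/S}(r, \mathcal{L}, n)$ via the Atiyah class of the universal twisted sheaf, following the approach of Huybrechts--Thomas and Siebert in the untwisted setting, and then to descend along the $\mu_l$-banding to $M$. The moduli problem is carried out on the stack first because, although the coarse space $M$ need not carry a universal family, the gerbe $\mathcal{M}$ always does.

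First I would produce the candidate obstruction map. On $\mathcal{M} \times_S \mathcal{X}$ there is a tautological universal twisted sheaf $\mathcal{E}$, which is perfect as an $\mathcal{O}$-complex because $\mathcal{A}$ is locally free and $\mathcal{E}$ is an $\mathcal{A}$-module flat over $\mathcal{M}$. Let $\pi:\mathcal{M} \times_S \mathcal{X} \to \mathcal{M}$ be the projection. The Atiyah class $\At(\mathcal{E}) \in \Ext^1(\mathcal{E}, \mathcal{E} \otimes L\pi^*\LL_{\mathcal{M}/S})$ is defined in the standard way. Applying $R\hom(\mathcal{E},-)$, composing with the trace, extracting the trace-free part, and pushing forward along $\pi$ produces a morphism $R\Hom_\pi(\mathcal{E},\mathcal{E})_0 \to \LL_{\mathcal{M}/S}[1]$, whose dual shift is the desired arrow $R\Hom_\pi(\mathcal{E},\mathcal{E})_0^\vee[-1] \to \LL_{\mathcal{M}/S}$. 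The trace-free part appears precisely because the determinant is fixed: the constraint $\widehat{\det}(\mathcal{E}) = \mathcal{L}$ cuts out deformations orthogonal to the trace component of the Atiyah class.

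Next I would verify perfectness and the Behrend--Fantechi axioms. For each closed point $E \in M(s)$, stability of $E$ forces $\Ext^0_{X_s}(E,E)_0 = 0$ via simplicity, while $\Ext^i_{X_s}(E,E)_0 = 0$ for $i \geq 3$ since $X_s$ is a surface. By base change for the perfect complex $R\hom(\mathcal{E},\mathcal{E})_0$ along the proper flat map $\pi$, the complex $R\Hom_\pi(\mathcal{E},\mathcal{E})_0^\vee[-1]$ is therefore of perfect amplitude $[-1, 0]$. The two axioms (that the induced map on $h^0$ is an isomorphism and on $h^{-1}$ is surjective) reduce to the classical deformation statement: over an Artinian local base with square-zero ideal $J$, deformations of $E$ with fixed determinant form a torsor under $\Ext^1(E,E)_0 \otimes J$ with obstructions in $\Ext^2(E,E)_0 \otimes J$. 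The same argument as for ordinary sheaves applies here, since being an $\mathcal{A}$-module is a condition rather than extra data, and $\mathcal{A}$ itself extends unobstructedly to Artinian thickenings on the chosen \'etale neighbourhood by the discussion following Theorem~\ref{thm:de-jong-period-index}.

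The main obstacle, which I expect to be the technical heart of the argument, is the descent from $\mathcal{M}$ to $M$ together with the compatibility with base change. Descent is possible because $R\hom(\mathcal{E}, \mathcal{E})$ has weight $0$ under the banding $\mu_l$-action (while $\mathcal{E}$ itself has weight $1$); the trace, the trace-free splitting, and the Atiyah class are all weight-preserving, so the entire arrow descends to $M$ by faithfully flat descent for perfect complexes along the gerbe. Base-change compatibility follows from flatness of $\pi$, properness of $\mathcal{X}/S$, perfectness of $\mathcal{E}$, and the standard functoriality of the Atiyah class and the trace. Given the resulting two-term perfect obstruction theory, the virtual class, together with its commutation with regular immersions and flat pullback, are then supplied by the general machinery of Behrend--Fantechi.
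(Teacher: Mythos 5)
The paper does not actually prove this theorem: it is imported verbatim as Theorem 3.2.83 of \cite{vBThesis}, so there is no in-paper argument to compare against. Your outline --- Atiyah class of the universal twisted sheaf on the gerbe, trace-free reduction for the fixed determinant, perfect amplitude $[-1,0]$ from simplicity of stable sheaves and $\dim X_s = 2$, weight-zero descent of $R\hom(\mathcal{E},\mathcal{E})_0$ along the $\mu_l$-banding, and the Behrend--Fantechi machinery for the virtual class and its compatibilities --- is the standard Huybrechts--Thomas-style route and, as far as the citation allows one to judge, coincides with the argument of the quoted source; the only phrase I would tighten is ``being an $\mathcal{A}$-module is a condition rather than extra data,'' which is literally true for weight-one sheaves on the gerbe (the weight being locally constant under deformation) but for $\mathcal{A}$-modules on $X$ itself requires the Morita identification $\Ext^*_{\mathcal{A}}(E,E)\cong\Ext^*_{\mathcal{O}}(E',E')$ to justify.
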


This theorem gives us access to the virtual Euler characteristics $e^{\vir}(M^{\mathcal{H}|_X}_{\mathcal{A}/X}(r, \mathcal{L}|_X, n))$  and the theorem also tells us that they are deformation invariant. For the virtual Euler characteristic, one can refer \cite{ionuciocanfontanine_2009_virtual,fantechi_2010_riemannroch}. Here, the notation indicates the absolute moduli space over a surface.

\begin{remark}
    Instead of Azumaya algebras to formulate the theory of twisted sheaves, one can use $\mu_r$-gerbes. Every Azumaya algebra of degree $r$ induces such a gerbe and the gerbe encodes more-or-less the notion of twisted sheaves together with the notion of the Chern character and stability. Gerbes, while technical, can be used to avoid deep results about the existence of Azumaya algebras. Also, constructions such as the determinant of twisted sheaves are much more natural in this language. Furthermore, this version of the theory generalises better to higher dimensions, as Theorem \ref{thm:de-jong-period-index} is false in higher dimensions. See~\cite{vBThesis,Lieblich2007} for details on this topic.
\end{remark}

\section{Twisted sheaves on Enriques surfaces}
\label{sec:twisted-enriques}

From now on we let $Y = X$ be an Enriques surface. We will study the Brauer group and moduli space of twisted sheaves, and relate it to the moduli space on the K3 cover of $Y$. This cover is denoted by $\pi : \tilde{Y} \to Y$ and the involution on $\tilde{Y}$ is denoted $\iota$.

\subsection{Brauer group of Enriques surfaces and the Beauville locus}

The Brauer group of an Enriques surface is known, see for example~\cite{beauville2009brauergroupenriquessurfaces}.

\begin{proposition}
    Let $Y$ be an Enriques surface. Then we have an isomorphism $H^2_{\text{\'et}}(Y, \mathbb{G}_m) = \mathrm{Br}(Y) \to H^3(Y, \mathbb{Z}) \cong \mathbb{Z}/2\mathbb{Z}$.
\end{proposition}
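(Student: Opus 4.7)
The plan is to go through the exponential sequence on the analytic site and then transfer the result to étale cohomology via GAGA-type comparisons, reducing the whole statement to a topological computation.

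First I would pass from étale cohomology of $\mathbb{G}_m$ to analytic cohomology of $\mathcal{O}^*$. Concretely, for a smooth projective variety over $\mathbb{C}$ one has a natural isomorphism $H^2_{\text{\'et}}(Y,\mathbb{G}_m)\cong H^2(Y^{\an},\mathcal{O}_{Y^{\an}}^*)$. One standard way to see this is to combine the Kummer sequence in the étale topology with the analytic exponential sequence, compare them via the change-of-site morphism, and invoke the Artin comparison theorem for torsion sheaves together with GAGA for coherent cohomology ($H^i(Y,\mathcal{O}_Y)=H^i(Y^{\an},\mathcal{O}_{Y^{\an}})$). A diagram chase then gives the desired identification of Brauer groups.

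Next, I would apply the exponential sequence $0\to\mathbb{Z}\to\mathcal{O}_Y\to\mathcal{O}_Y^*\to 0$, which produces the exact sequence
\[
H^2(Y,\mathcal{O}_Y)\longrightarrow H^2(Y,\mathcal{O}_Y^*)\longrightarrow H^3(Y,\mathbb{Z})\longrightarrow H^3(Y,\mathcal{O}_Y).
\]
Here $H^3(Y,\mathcal{O}_Y)=0$ for dimension reasons, and $H^2(Y,\mathcal{O}_Y)$ vanishes by Serre duality: indeed $H^2(Y,\mathcal{O}_Y)\cong H^0(Y,\omega_Y)^\vee$, and $\omega_Y$ has no nonzero global sections since it is a nontrivial 2-torsion line bundle. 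Combined with the previous step, this already gives $\mathrm{Br}(Y)\cong H^3(Y,\mathbb{Z})$.

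It remains to identify $H^3(Y,\mathbb{Z})$ with $\mathbb{Z}/2\mathbb{Z}$, which I would do via Poincaré duality on the oriented closed $4$-manifold $Y$: $H^3(Y,\mathbb{Z})\cong H_1(Y,\mathbb{Z})\cong \pi_1(Y)^{\mathrm{ab}}=\mathbb{Z}/2\mathbb{Z}$, using the well-known fact that $\pi_1(Y)=\mathbb{Z}/2\mathbb{Z}$. The part I would expect to be the main subtlety, if I had to write this carefully, is the étale-to-analytic comparison in the first step; the rest is a clean assembly of standard vanishings and Poincaré duality.
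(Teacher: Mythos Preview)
Your argument is correct and is precisely the standard route via the exponential sequence; the paper itself gives no proof and simply refers to Beauville for this well-known fact, so your write-up is more detailed than what appears here.

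One small imprecision worth flagging: the comparison $H^2_{\text{\'et}}(Y,\mathbb{G}_m)\cong H^2(Y^{\an},\mathcal{O}_{Y^{\an}}^*)$ does \emph{not} hold for arbitrary smooth projective varieties over $\mathbb{C}$ (for instance, on a K3 surface the analytic group has a huge divisible part coming from $H^2(\mathcal{O})$). What is always true is that $H^2_{\text{\'et}}(Y,\mathbb{G}_m)$ is torsion for proper schemes over an algebraically closed field and maps isomorphically onto the torsion of the analytic group. In your situation this subtlety evaporates, because the vanishing $H^2(Y,\mathcal{O}_Y)=0$ forces $H^2(Y^{\an},\mathcal{O}^*)\cong H^3(Y,\mathbb{Z})\cong\mathbb{Z}/2\mathbb{Z}$ to be finite already, so the two groups coincide. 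You even anticipate this by calling the comparison the ``main subtlety''; just be sure to state it on torsion parts, or to invoke $H^2(\mathcal{O}_Y)=0$ before claiming the full isomorphism.
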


\begin{definition}
    Let $Y$ be an Enriques surface. We denote by $B_Y$ the unique nonzero element of $\mathrm{Br}(Y)$. We also let $\mathcal{A}$ be any unobstructed Azumaya algebra of degree two that induces $B_Y$.
\end{definition}

Note that $\mathcal{A}$ exists by Theorem~\ref{thm:de-jong-period-index} and the comments thereafter. Note that the fact that $B_S$ is nonzero implies that there is no \emph{B-field}, i.e., a class $\xi \in H^2(Y, \mathbb{Z})$ that maps to $B_Y$ under the composition
\[
H^2(Y, \mathbb{Z}) \to H^2(Y, \mathbb{Z}/r\mathbb{Z}) \cong H^2_{\text{\'et}}(Y, \mu_r) \to \Br(Y).
\]
Indeed, any class in this image maps to zero in $H^3(X, \mathbb{Z})$. See the introduction for the relevance of the nonexistence of such a $\xi$.

We will now study the moduli space of Enriques surfaces in order to show that we can deform to a more suitable surface. The suitable Enriques surfaces lie in a locus that we will call the \emph{Beauville locus}, after Beauville~\cite{beauville2009brauergroupenriquessurfaces}, who proved its existence. On the Beauville locus, we have that $\pi^*B_Y = 0$, where $\pi : \tilde{Y} \to Y$ is the K3 cover of $Y$. More explicitly, the following is Beauville's result.

\begin{theorem}
    \label{thm:Beauville-locus}
    If $Y$ is an Enriques surface, let $\pi : \tilde{Y} \to Y$ be its K3 cover and let $\iota : \tilde{Y} \to \tilde{Y}$ be the involution. Let $M$ be the (coarse) moduli space of Enriques surfaces.
    \begin{enumerate}
        \item For a fixed $n < -1$, the set of Enriques surfaces $Y$ for which there exists a line bundle $L$ on $\tilde{Y}$ with $c_1(L)^2 = 4n+2$ and $\iota^*L = L^{-1}$ is a hypersurface in $M$, which we denote by $H_n$.
        \item The locus of Enriques surfaces $Y$ for which $\pi^*B_Y = 0 \in \Br(\tilde{Y})$ is the infinite union $\bigcup_{n < -1} H_n$ of distinct hypersurfaces.
    \end{enumerate}
\end{theorem}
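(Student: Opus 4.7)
The plan is to reduce both parts to lattice-theoretic statements on the anti-invariant cohomology $N^- := H^2(\tilde{Y}, \Z)^{-\iota^\ast}$ of the K3 cover and then realize the resulting conditions geometrically via the period map. Throughout, $G := \Z/2\Z$ denotes the Galois group of $\pi$, acting via $\iota^\ast$. Recall that $N^-$ has signature $(2,10)$ and contains both the transcendental lattice of $\tilde{Y}$ and the anti-invariant part of $\Pic(\tilde{Y})$.

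For part (1), I would invoke the Horikawa--Barth--Peters period description: the moduli $M$ maps to a quotient of a type IV period domain attached to $N^-$, identifying $M$ birationally with an arithmetic quotient of this domain. A line bundle $L$ on $\tilde{Y}$ with $\iota^\ast L = L^{-1}$ has $c_1(L) \in N^-$, and demanding $c_1(L) \in H^{1,1}(\tilde{Y})$ cuts out a Heegner-type hypersurface in the period domain. Fixing $c_1(L)^2 = 4n+2$ restricts to finitely many arithmetic orbits of primitive classes, which descend to a single irreducible divisor $H_n \subset M$ (the orthogonal group of $N^-$ acts with finitely many orbits on vectors of fixed norm). The hypothesis $n<-1$ ensures the associated locus sits inside the Enriques moduli rather than parameterizing K3 covers with an Enriques involution that acquires fixed points or with nodal degenerations.

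For part (2), I would apply the Hochschild--Serre spectral sequence for the \'etale $G$-cover $\pi$ with coefficients in $\mathbb{G}_m$. Since $H^0(\tilde{Y}, \mathbb{G}_m) = \C^\ast$ is divisible, the groups $H^i(G, \C^\ast)$ vanish for $i \geq 1$, collapsing the low-degree sequence to an isomorphism
\[
\ker\bigl( \pi^\ast \colon \Br(Y) \to \Br(\tilde{Y}) \bigr) \;\cong\; H^1\bigl(G, \Pic(\tilde{Y})\bigr).
\]
Unwinding, the right-hand side is the quotient of $\{L \in \Pic(\tilde{Y}) : \iota^\ast L = L^{-1}\}$ by $\{M \otimes (\iota^\ast M)^{-1} : M \in \Pic(\tilde{Y})\}$. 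Since $\Br(Y) \cong \Z/2\Z$, nonvanishing of this $H^1$ is equivalent to $\pi^\ast B_Y = 0$.

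The final, and most delicate, step is the lattice computation: I would show that on the anti-invariant part, $L$ lies in the image of $1 - \iota^\ast$ precisely when $L^2 \equiv 0 \pmod 4$. The inclusion $\subseteq$ is immediate from $(M - \iota^\ast M)^2 = 2(M^2 - M\cdot \iota^\ast M)$ together with evenness of $M^2$ on a K3. The reverse inclusion requires the explicit structure of $N^-$ (whose $U$ and $U(2)$ summands govern the relevant discriminant form), and is the main obstacle. Granting this, $H^1(G, \Pic(\tilde{Y}))$ is nontrivial precisely when $\tilde{Y}$ carries an anti-invariant algebraic class of square $4n+2$, matching the description in part (1) and completing the identification of the locus $\{\pi^\ast B_Y = 0\}$ with $\bigcup_{n<-1} H_n$. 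The auxiliary geometric check that small values of $n$ (i.e.\ $n = -1$) must be excluded because the corresponding $(-2)$-classes would obstruct $\iota$ being fixed-point-free is a subtle but standard verification in Enriques lattice theory.
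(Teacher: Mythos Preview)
The paper does not give its own proof of this theorem; it is quoted directly from Beauville's paper on the Brauer group of Enriques surfaces. Your outline follows Beauville's strategy: period domain description for part (1), Hochschild--Serre plus a lattice computation for part (2). So on the level of architecture you are aligned with the source.

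There is, however, a concrete error in your spectral sequence step. You claim that $H^i(G,\C^\ast)$ vanishes for all $i\geq 1$ because $\C^\ast$ is divisible. Divisibility (injectivity as a $\Z$-module) does not imply vanishing of group cohomology over $\Z[G]$: for $G=\Z/2\Z$ acting trivially one has $H^1(G,\C^\ast)=\Hom(\Z/2,\C^\ast)=\mu_2\neq 0$, and by periodicity $H^{2k+1}(G,\C^\ast)\cong\mu_2$ for every $k\geq 0$. Only the even-degree groups vanish. Consequently there is a possibly nonzero differential $d_2\colon H^1(G,\Pic(\tilde{Y}))\to H^3(G,\C^\ast)\cong\Z/2$, and your asserted isomorphism $\ker(\pi^\ast)\cong H^1(G,\Pic(\tilde{Y}))$ does not follow from the collapse you describe. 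Beauville treats this point with more care; you should consult his argument rather than rely on the divisibility shortcut.

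A smaller gap: in arguing that $(M-\iota^\ast M)^2\equiv 0\pmod 4$, evenness of $M^2$ alone is insufficient --- you also need $M\cdot\iota^\ast M$ even. This is true, but the reason is that $(M+\iota^\ast M)$ lies in the invariant sublattice $H^2(\tilde{Y},\Z)^+\cong\pi^\ast H^2(Y,\Z)_{\mathrm{tf}}$, which carries twice an even form, whence $(M+\iota^\ast M)^2\equiv 0\pmod 4$; combining with $M^2$ even gives the claim. Your sketch omits this step.
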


If $Y$ is such that $\pi^*B_Y = 0$, then $\pi^*\mathcal{A}$ is a trivial algebra, and it turns out that it can be written as $\End(\mathcal{O}_{\tilde{Y}} \oplus L)$. This is one way of constructing $L$ (see~\cite{reede2024enriquessurfacestrivialbrauer}).

\begin{definition}
    \label{def:beauville-locus} We let $B = \bigcup_{n < -1} H_n$ be the \emph{Beauville locus}, the locus of Enriques surfaces for which $B_Y$ vanishes on the K3 cover. For any integer $N < -1$, we also denote by $B_N$ the locus $\bigcup_{n \leq N} H_n$.
\end{definition}

\begin{lemma}
\label{lem:deform-line-bundle}
    Let $\mathcal{Y} \to S$ be a smooth family of Enriques surfaces. Let $L$ be a line bundle defined on $\mathcal{Y}_s$ for some $s \in S$. Then $L$ extends to an \'etale neighbourhood of $s$.
\end{lemma}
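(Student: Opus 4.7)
The plan is to follow the extension strategy used for the Azumaya algebra $\mathcal{A}$ right after Theorem~\ref{thm:de-jong-period-index}: first I would show that the obstruction to extending $L$ along any square-zero thickening of the central fibre vanishes, then algebraize a formal extension by Grothendieck's existence theorem, and finally descend to an \'etale neighbourhood of $s$ by Artin's approximation theorem.

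For the infinitesimal step, I use the standard fact that the obstruction to lifting a line bundle along a closed immersion with square-zero ideal $I$ lies in $H^2(\mathcal{Y}_s, \mathcal{O}_{\mathcal{Y}_s}) \otimes I$, while the set of lifts, when nonempty, is a torsor under $H^1(\mathcal{Y}_s, \mathcal{O}_{\mathcal{Y}_s}) \otimes I$. For the Enriques surface $Y = \mathcal{Y}_s$, the group $H^1(Y, \mathcal{O}_Y)$ vanishes by definition, and by Serre duality $H^2(Y, \mathcal{O}_Y) \cong H^0(Y, \omega_Y)^\vee$, which is zero because $\omega_Y$ is a nontrivial line bundle with $\omega_Y^2 = \mathcal{O}_Y$ (any section would give an isomorphism $\mathcal{O}_Y \to \omega_Y$). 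Hence the obstruction vanishes and I can inductively extend $L$ to every infinitesimal neighbourhood $\mathcal{Y} \times_S \Spec \mathcal{O}_{S,s}/\mathfrak{m}_s^n$.

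Next, the compatible system of infinitesimal extensions assembles into a formal line bundle on $\mathcal{Y} \times_S \Spec \widehat{\mathcal{O}}_{S,s}$. Since $\mathcal{Y} \to S$ is proper, Grothendieck's existence theorem algebraizes this formal object to an honest line bundle $\widehat{L}$ on $\mathcal{Y} \times_S \Spec \widehat{\mathcal{O}}_{S,s}$. Finally, applying Artin's approximation theorem to the functor of extensions of $L$ to line bundles on $\mathcal{Y}$ --- equivalently, to the relative Picard scheme $\Pic_{\mathcal{Y}/S}$ at the $k(s)$-point represented by $L$ --- promotes the formal solution $\widehat{L}$ to a genuine extension over an \'etale neighbourhood of $s$, giving the desired line bundle.

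The only nontrivial ingredient is the cohomological vanishing $H^2(Y, \mathcal{O}_Y) = 0$, which is essentially a defining feature of Enriques surfaces; the remaining steps are packaged in standard formal GAGA and approximation results, so I do not expect any serious obstacle. The argument is in fact strictly easier than --- and structurally parallel to --- the Azumaya algebra extension quoted after Theorem~\ref{thm:de-jong-period-index}, which is why I have not bothered to separate out a hardest step.
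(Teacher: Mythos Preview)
Your proposal is correct and follows exactly the same route as the paper: identify the obstruction space as $\Ext^2(L,L) \cong H^2(Y,\mathcal{O}_Y) = 0$, extend formally, then apply Grothendieck's existence theorem and Artin approximation. The paper's proof is just a terser version of what you wrote.
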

\begin{proof}
    This is a standard application of deformation theory. The obstruction space to deforming $L$ is $\Ext^2(L, L) = H^2(Y, \mathcal{O}_Y)=0$. The result now follows using Grothendieck's existence theorem and Artin's approximation theorem.
\end{proof}

\begin{theorem}
\label{thm:deform-to-beauville-locus}
    Fix an integer $N$ and let $Y$ be an Enriques surface with a fixed polarisation $H$ and equipped with a line bundle $L$. Then there exists a smooth projective family $\mathcal{Y} \to S$ with $S$ connected and smooth and a relative polarisation $\mathcal{H}$ and a line bundle $\mathcal{L}$ such that there exists a point $p \in S$ for which $(\mathcal{Y}|_s, \mathcal{H}|_s, \mathcal{L}|_s) = (Y, H, L)$ and there exists a point $q \in S$ for which $\mathcal{Y}|_q \in B_N$. In fact, we can assume that the set of such $q$ in dense in $S$.
\end{theorem}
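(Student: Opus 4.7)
The plan is to take $S$ to be (an étale neighbourhood of the origin in) the Kuranishi base of $Y$, then extend $H$ and $L$ to it, and then appeal to the density of the Beauville locus inside the moduli of Enriques surfaces. First I would recall that Enriques surfaces have unobstructed deformations with $h^1(Y,T_Y)=10$, so the Kuranishi base $\mathrm{Def}(Y)$ is a smooth $10$-dimensional germ carrying a universal family $\mathcal{Y}\to\mathrm{Def}(Y)$. Applying Lemma~\ref{lem:deform-line-bundle} twice (once to $H$, once to $L$) produces extensions $\mathcal{H}$ and $\mathcal{L}$ over a possibly smaller étale neighbourhood $S$ of the base point $p$; openness of the ample locus lets us shrink $S$ further so that $\mathcal{H}$ is a relative polarisation. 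This already provides the smooth family with the prescribed fibre at $p$.

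Next I would use Theorem~\ref{thm:Beauville-locus}: the Beauville locus $B=\bigcup_{n<-1}H_n$ is a countable union of pairwise distinct hypersurfaces in the coarse moduli space $M$ of Enriques surfaces, and $B_N$ differs from $B$ by only finitely many $H_n$ (those with $N<n<-1$). After shrinking $S$, the Kuranishi family induces a smooth surjection $\varphi:S\to U$ onto an open analytic neighbourhood $U$ of $[Y]\subset M$; one passes to a level structure (for instance by rigidifying via a marking of $H^2(Y,\mathbb{Z})_{\mathrm{tf}}$) to turn the coarse moduli problem into a representable one, which is what makes $\varphi$ honest.

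The key analytic input is that $B_N\cap U$ is dense in $U$ in the complex-analytic topology. This is classical and follows from the period-theoretic description: the $H_n$ are Heegner divisors of unbounded discriminant inside the type-IV period domain of Enriques surfaces, and such a collection is well-known to be analytically dense by a Baire-type argument. Pulling back along the smooth map $\varphi$, the preimage $\varphi^{-1}(B_N)\subset S$ is then dense, and every point in it provides a fibre of $\mathcal{Y}\to S$ lying in $B_N$. Since $\mathrm{Def}(Y)$ is connected we can certainly arrange $S$ to be connected.

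The main obstacle I expect is bookkeeping rather than genuine mathematics: assembling the Kuranishi family, the extensions of $H$ and $L$, and the necessary étale covers so that the coarse moduli statement of Theorem~\ref{thm:Beauville-locus} becomes a statement about our actual schemes with a polarisation and a line bundle in hand. The only piece of real analytic content is the density of Heegner divisors, which is standard from Beauville's work~\cite{beauville2009brauergroupenriquessurfaces}; the remainder is routine deformation theory combined with the openness of ampleness.
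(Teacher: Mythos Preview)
Your proposal is correct and follows essentially the same strategy as the paper: build a smooth family around $Y$, extend $H$ and $L$ via Lemma~\ref{lem:deform-line-bundle}, use openness of ampleness, and invoke density of $B_N$ in moduli. The only notable differences are packaging choices. Where you use the Kuranishi family and pass to a level structure to get an honest map to moduli, the paper simply takes a component of a smooth atlas of the moduli \emph{stack} $\mathcal{M}$ of Enriques surfaces (smooth and irreducible by \cite[Sec.~4]{Dolgachev}) that hits $[Y]$; this sidesteps the rigidification issue entirely. More substantively, your density argument via Heegner divisors in the period domain and a Baire-type statement is unnecessary: the paper observes that $B_N$ is an \emph{infinite} union of distinct hypersurfaces in the irreducible variety $M$, hence automatically Zariski dense (a proper closed subset has only finitely many irreducible components), and Zariski density pulls back along the smooth atlas map and survives shrinking to an open. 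Your analytic route is valid but heavier than needed.
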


\begin{proof}
    Let $\mathcal{M}$ be the moduli stack of Enriques surfaces. By ~\cite[Sec. 4]{Dolgachev}, $\mathcal{M}$ is an irreducible smooth unirational Artin stack of dimension 10. And we have a surjection to the coarse moduli space $\mathcal{M} \to M$. Now note that $B_N$, as an infinite union of hypersurfaces, is dense in $M$. Therefore, its preimage is dense in $\mathcal{M}$. By definition of Artin stack, there is a union of schemes $\bigsqcup_\alpha S_\alpha$ and a smooth surjective morphism $\bigsqcup_\alpha S_\alpha\to \mathcal{M}$. Now, let $p$ be the closed point in $\mathcal{M}$ representing $Y$. Let $S$ be a component of $\bigsqcup_\alpha S_\alpha$ that contains $p$, notice that $S$ is smooth since $\mathcal{M}$ is smooth. This gives us a smooth family with smooth base. By shrinking $S$ further, $L$ and $H$ also extend by Lemma~\ref{lem:deform-line-bundle}. By~\cite[Thm. 1.2.27]{Lazarsfeld2004}, we also find that $H$ remains ample in a neighbourhood of $Y$. Finally, we note that $B_N$ is dense, and shrinking our space does not affect this notion.
\end{proof}

\subsection{The moduli space of twisted sheaves on Enriques surfaces}

In Theorem~\ref{thm:moduli-space-existence}, we recalled the existence of moduli spaces of twisted sheaves with fixed determinant. However, we will be fixing a Mukai vector $v$ instead of a determinant. In the following, $v$ will always denote a Mukai vector of rank 2. The next proposition is just a corollary of Thm.~\ref{thm:moduli-space-existence} and Thm.~\ref{thm:perfect-obs-th}.

\begin{remark}
    Twisted sheaves in general do not admit integral Chern classes, only a rational Chern character or Mukai vector. Nevertheless, in our situation, we consider rank two sheaves over a degree two Azumaya algebra. In this case, the determinant of such a sheaf is a proper line bundle, by Prop.~\ref{prp:twisted-determinant}. We can also formally define a second Chern class using the formula $c_2 = \frac{1}{2}c_1^2 - \Ch_2$. This will not give integral results in general. If $Y$ is an Enriques surface in the Beauville locus, then one can show that $c_2$ defined as above is of the form $\frac{1}{2}\mathbb{Z} + \frac{1}{4}$. To show this, use that $\pi^*\mathcal{A} = \End(\mathcal{O}_{\tilde{Y}} \oplus L)$, that $L^2 \equiv 2 \mod 4$. In the following text, we will also use the fact $c_1(L)c_1(\pi^*M) = 0$ for all line bundles $M$ on $Y$ implied by $\iota^*L = L^{-1}$.
\end{remark}

To study the relative moduli space of stable sheaves over a family, we need a way to fix Chern character over a family. We will recall some facts for this purpose.
Let us recall the definition of Hodge bundle and its filtration. let $\mathcal{Y}\to S$ be a smooth family with $S$ smooth and connected. The Hodge bundle is defined as:
\[\mathcal{H}_{D R}^k(\mathcal{Y} / S):=R^kf_*\left(\Omega_{\mathcal{Y} / S}^\bullet\right),\]
where \[\Omega_{\mathcal{Y} / S}^\bullet:=\left[\mathcal{O}_{\mathcal{Y}} \xrightarrow{d_{D R}} \Omega_{\mathcal{Y} / S}^1 \xrightarrow{d_{D R}} \Omega_{\mathcal{Y} / S}^2 \right]\] is the \textit{algebraic De Rham complex}. Using the stupid truncations 
\[\Omega_{\mathcal{Y} / S}^{\geqslant p}:=\left[0 \rightarrow \cdots \rightarrow 0 \rightarrow \Omega_{\mathcal{Y} / S}^p \xrightarrow{d_{D R}} \Omega_{\mathcal{Y} / S}^{p+1} \xrightarrow{d_{D R}} \cdots \xrightarrow{d_{D R}} \Omega_{\mathcal{Y} / S}^2\right].\] One can define the a filtration of the Hodge bundle $\mathcal{H}_{D R}^{p+q}(\mathcal{Y} / S)$:
\[F^p \mathcal{H}_{D R}^{p+q}(\mathcal{Y} / S):=\operatorname{im}\left(R^{p+q} f_*\left(\Omega_{\mathcal{Y} / S}^{\geqslant p}\right) \rightarrow R^{p+q} f_*\left(\Omega_{\mathcal{Y} / S}^{\bullet}\right)\right).\]
The filtration has the following property: $\frac{F^p \mathcal{H}_{D R}^{p+q}(\mathcal{Y} / S)}{F^{p+1} \mathcal{H}_{D R}^{p+q}(\mathcal{Y} / S)} \cong R^q f_* \Omega_{\mathcal{Y} / S}^p$. The Hodge bundle, the filtration and the quotient are all vector bundles and stable under base change. Now, we consider a section horizontal section $\tilde{v}_1\in \Gamma(S,F^1 \mathcal{H}_{D R}^{2}(\mathcal{Y} / S))$ with respect to the Gauss-Manin connection on the Hodge bundle. $\tilde{v}_1$ is a locally constant family of cohomology classes $\tilde{v}_{1,b}\in H^{1,1}(\mathcal{Y}_b,\C)$ on the fibers $\mathcal{Y}_b:=\mathcal{Y}\times_S \{b\}$. Therefore, to fix a Mukai vector over the family $\mathcal{Y}\to S$, we need to fix a horizontal section $\tilde{v}_1\in \Gamma(S,F^1 \mathcal{H}_{D R}^{2}(\mathcal{Y} / S))$ such that locally on fibers we have $\tilde{v}_{1,b}\in H^{1,1}(\mathcal{Y}_b,\C)\cap H^{2}(\mathcal{Y}_b,\Z)$. Similarly, we can fix horizontal sections $\tilde{\mathbbm{1}}\in \Gamma(S,F^0 \mathcal{H}_{D R}^{0}(\mathcal{Y} / S))$ and $\tilde{\boldsymbol{\operatorname{p}}}\in \Gamma(S,F^2 \mathcal{H}_{D R}^{4}(\mathcal{Y} / S))$ such that locally on fibers give respectively the fundamental class and the point class.

\begin{proposition}
\label{prp:existence-mod-space-enriques}
    Let $f : \mathcal{Y} \to S$ be a smooth family of Enriques surfaces with $S$ smooth. Let $\mathcal{H}$ be a relative polarisation. Fix an above-described horizontal section $\tilde{v}_1\in \Gamma(S,F^1 \mathcal{H}_{D R}^{2}(\mathcal{Y} / S))$ such that $\tilde{v}_{1,b}.\mathcal{H}|_b$ is odd $\forall b\in S$. Then, \'etale locally on $S$, there is an Azumaya algebra $\mathcal{A}$ of degree two on $\mathcal{Y}$ that represents the nontrivial Brauer class in each fibre. There also exists a relative projective moduli space $M^H_{\mathcal{A}/\mathcal{Y}/S}(\widetilde{v_\mathcal{A}}) \to S$ of stable twisted sheaves with twisted Mukai vector $\widetilde{v_\mathcal{A}}$, where $\widetilde{v_\mathcal{A}}=(v_0\cdot\tilde{\mathbbm{1}},\tilde{v}_1,v_2\cdot\tilde{\boldsymbol{\operatorname{p}}})$ with $v_0\in \Z,v_2\in \Q$ and $\tilde{\mathbbm{1}},\tilde{\boldsymbol{\operatorname{p}}}$ horizontal sections described above. Furthermore, this relative moduli space carries a relative obstruction theory. Let $E$ be a stable sheaf on the fiber $X$ over $s\in S$ then the tangent space is given by $\Ext^1_X(E,E)_0$ and the obstruction space by $\Ext^2_X(E,E)_0$.

    Furthermore, the same is true for a family of K3 surfaces.
\end{proposition}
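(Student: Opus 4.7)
The plan is to construct the relative moduli space \'etale-locally on $S$ by reducing to Theorem~\ref{thm:moduli-space-existence}, glue and descend, and then invoke Theorem~\ref{thm:perfect-obs-th} for the obstruction theory. Two ingredients must be produced \'etale-locally: the Azumaya algebra $\mathcal{A}$ and a line bundle $\mathcal{L}$ lifting $\tilde v_1$.

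First I would produce $\mathcal{A}$. At any fixed $s\in S$, Theorem~\ref{thm:de-jong-period-index} supplies a degree two Azumaya algebra on $\mathcal{Y}_s$ representing $B_{\mathcal{Y}_s}$, and the unobstructedness remark following that theorem extends it to an Azumaya algebra $\mathcal{A}$ on an \'etale neighbourhood of $s$. Since the Brauer group of each Enriques fibre is $\mathbb{Z}/2$ and $b\mapsto B_{\mathcal{Y}_b}$ is locally constant along the smooth family (via the identification with $H^3(\mathcal{Y}_b,\mathbb{Z})$ and smooth base change), after possibly shrinking further $\mathcal{A}$ represents the nontrivial class on every fibre. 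For $\mathcal{L}$, the Lefschetz $(1,1)$-theorem gives a line bundle $L_s$ on $\mathcal{Y}_s$ with $c_1(L_s)=\tilde v_{1,s}$, and Lemma~\ref{lem:deform-line-bundle} extends it to a line bundle $\mathcal{L}$ on an \'etale neighbourhood of $s$; horizontality of $\tilde v_1$ under the Gauss-Manin connection forces $c_1(\mathcal{L}|_b)=\tilde v_{1,b}$ throughout that neighbourhood. The coprimality hypothesis of Theorem~\ref{thm:moduli-space-existence} is exactly the assumed oddness of $\tilde v_{1,b}\cdot\mathcal{H}|_b$.

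With $\mathcal{A}$ and $\mathcal{L}$ in hand, Theorem~\ref{thm:moduli-space-existence} produces, \'etale locally, a projective relative moduli space of stable twisted sheaves with $\widehat{\det}=\mathcal{L}$ and $\Ch_{\mathcal{A},2}$ determined by $v_2$. Fixing $\tilde v_1$ only constrains $\widehat{\det}$ modulo the torsion of $\Pic(\mathcal{Y}_b)$, which for Enriques surfaces is generated by $\omega_{\mathcal{Y}/S}$, so the moduli with fixed twisted Mukai vector is the disjoint union over the two lifts $\mathcal{L}$ and $\mathcal{L}\otimes\omega_{\mathcal{Y}/S}$. By the remark after Proposition~\ref{MoritaEq}, the moduli space depends only on the Brauer class of $\mathcal{A}$ (which is a globally defined piece of data) and on $(\widehat{\det},\Ch_{\mathcal{A},2})$ (also globally defined on $\mathcal{Y}/S$), so the \'etale-local models glue and descend to the desired global $M^H_{\mathcal{A}/\mathcal{Y}/S}(\widetilde{v_\mathcal{A}})\to S$. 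The relative perfect obstruction theory and the identification of tangent and obstruction spaces at a closed point $[E]$ over $X$ with $\Ext^1_X(E,E)_0$ and $\Ext^2_X(E,E)_0$ then follow directly from Theorem~\ref{thm:perfect-obs-th}. The K3 case is strictly simpler because $\Pic(\tilde Y)$ is torsion-free, removing the ambiguity in lifting $\tilde v_1$.

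The main obstacle I anticipate is the bookkeeping in the gluing/descent step: verifying that the two determinant lifts and the various Azumaya representatives of $B_{\mathcal{Y}_b}$ fit together compatibly on overlaps of \'etale charts, and that fixing the twisted Mukai vector really does amount to taking the disjoint union over the two torsion lifts of the fixed-determinant moduli spaces of Theorem~\ref{thm:moduli-space-existence}. Beyond this, the argument is a concatenation of results already in the excerpt.
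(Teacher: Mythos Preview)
Your proposal is correct and follows essentially the same approach as the paper: extend an unobstructed Azumaya algebra and a line bundle representing $\tilde v_1$ \'etale locally, invoke Theorem~\ref{thm:moduli-space-existence} and Theorem~\ref{thm:perfect-obs-th}, and in the Enriques case take the disjoint union over the two torsion lifts $\mathcal{L}$ and $\mathcal{L}\otimes\omega_{\mathcal{Y}/S}$ (the K3 case being simpler since $\Pic$ is torsion-free). The only difference is that your anticipated ``main obstacle'' of gluing and descent is unnecessary, since the statement itself is asserted only \'etale locally on $S$; the paper's proof accordingly just shrinks $S$ and stops there.
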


\begin{remark}
\label{rmk:existence-mod-space-enriques}
    We note that fixing a Mukai vector $v$ in a single fiber ensures that it extends to an \'etale neighborhood of that fiber. We can certainly extend $v_0$ and $v_2$. For $v_1$, we can assume that it is given by $c_1(L)$ for some $L$, a line bundle over a fiber.(otherwise there are no sheaves with Mukai vector $v$). By Lemma~\ref{lem:deform-line-bundle}, $L$ extends to a line bundle $\tilde{L}$ over some neighborhood $U\subseteq S$. By the Proposition 3.33 of \cite{bae2022countingsurfacescalabiyau4folds}, there is a horizontal section $\tilde{\Ch}_1(\tilde{L})\in\Gamma(U,F^1 \mathcal{H}_{D R}^{2}(\mathcal{Y}_U / U))$ such that over any $\C$-point $b$, $\tilde{\Ch}_1(\tilde{L})_b=\Ch(\tilde{L}_b)\in H^2(\mathcal{Y}_b,\C)$.  We can also fix the polarization in a single fiber first and the extend to a neighborhood in this way.
\end{remark}

\begin{proof}[Proof of Prop.~\ref{prp:existence-mod-space-enriques}]
    We fix $s \in S$. Then our chosen unobstructed Azumaya algebra on the fibre $\mathcal{Y}_s$ deforms and so we can shrink $S$ and assume that we are given a family of Azumaya algebras $\mathcal{A}$. Next, we note that in the K3 case, fixing a Mukai vector is the same as fixing the rank, determinant and second Chern class. So in this case, the proposition follows directly from Thm.~\ref{thm:moduli-space-existence} and Thm.~\ref{thm:perfect-obs-th}. In the Enriques case, we need to do a little extra work. In this case, the Mukai vector does not determine the determinant, since the map $\Pic(Y) \to H^2(Y, \mathbb{Q})$ identifies $L$ and $L \otimes \omega_Y$ for an Enriques surface $Y$. Pick any $L$ representing the class. Again, $L$ extends to some neighbourhood of $S$ by Lemma~\ref{lem:deform-line-bundle}. Then our desired moduli space is the union of the two moduli spaces: the one with Mukai vector $v$ and determinant $L$ and the one with determinant $L \otimes \omega_Y$ instead.
\end{proof}

\begin{definition}
    We denote by $e^{\vir}(M^H_{\mathcal{A}/Y}(v_\mathcal{A}))$ the virtual Euler characteristic of the moduli space of twisted sheaves. Note that the existence of the relative obstruction theory implies that it is deformation invariant.
\end{definition}

In the following, we will only consider a twisted Mukai vectors $v_\mathcal{A}$ of rank $2$. By Remark \ref{rk2onlystable}, in rank $2$ case, all twisted sheaves are stable, therefore we can drop the gcd condition. 
The goal of this paper is to compute the virtual Euler characteristic. By deformation invariance and Theorem~\ref{thm:deform-to-beauville-locus}, we can do so with the additional assumption that $Y \in B_N$ with $N < -\frac{v_\mathcal{A}^2+5}{2}$. The reason for this assumption is that we can use the following theorem of Reede~\cite{reede2024enriquessurfacestrivialbrauer}. We note that Reede considers ``generically simple torsion-free modules'', but for $\mathcal{A}$ of degree 2 with nontrivial Brauer class $B_Y$, this condition is the same as $r = 2$ and torsion free.

\begin{theorem}
\label{thm:reede}
Let $v_\mathcal{A}$ be a twisted Mukai vector with rank $2$.
    Assume that $N < -\frac{v_\mathcal{A}^2+5}{2}$. Then $M_{\mathcal{A}/Y}^H(v_\mathcal{A})$ is smooth of the expected dimension. Furthermore, the K3 cover $\pi : \tilde{Y} \to Y$ induces a morphism of moduli spaces of twisted sheaves
    \begin{equation}
    \label{eq:pullback-twisted}
    M_{\mathcal{A}/Y}^H(v_\mathcal{A}) \to M_{\pi^*\mathcal{A}/\tilde{Y}}^{\pi^*H}(\pi^*v_\mathcal{A})
    \end{equation}
    which is an \'etale two-to-one cover onto its image. The image is identified with $\mathrm{Fix}(\iota^*)$, where $\iota^*$ acts by pullback on $M_{\tilde{Y}}^{\pi^*H}(\pi^*v_\mathcal{A})$. 
\end{theorem}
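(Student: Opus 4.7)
The plan is to transfer the problem from $Y$ to its K3 cover $\tilde{Y}$, where the assumption $Y\in B_N$ trivialises $\pi^*\mathcal{A}$, so Morita equivalence (Example \ref{ex:trivial-algebra-modules} and Proposition \ref{MoritaEq}) converts $M^{\pi^*H}_{\pi^*\mathcal{A}/\tilde{Y}}(\pi^*v_\mathcal{A})$ into a moduli space of ordinary sheaves on a K3 surface. The projection formula gives, for any coherent sheaf $E$ on $Y$,
\begin{equation}
\Ext^i_{\tilde{Y}}(\pi^*E,\pi^*E)=\Ext^i_Y(E,\pi_*\pi^*E)=\Ext^i_Y(E,E)\oplus \Ext^i_Y(E,E\otimes\omega_Y),
\end{equation}
which will control both smoothness and the differential of the pullback map.

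For smoothness of $M=M^H_{\mathcal{A}/Y}(v_\mathcal{A})$, I would use Theorem~\ref{thm:perfect-obs-th}: the obstruction space at $E\in M$ is $\Ext^2_Y(E,E)_0$. Since $H^2(Y,\mathcal{O}_Y)=0$, the trace map is zero, so $\Ext^2_Y(E,E)_0=\Ext^2_Y(E,E)$. By Serre duality this is $\Hom(E,E\otimes\omega_Y)^\vee$. If it were nonzero, stability of $E$ and $E\otimes\omega_Y$ (which have the same twisted slope, since $\omega_Y$ is torsion) forces any nonzero morphism to be an isomorphism $E\cong E\otimes\omega_Y$. Pulling back, $\pi^*E$ would acquire an isomorphism with $\iota^*\pi^*E$ of a non-scalar kind, forcing a splitting $\pi^*E=F\oplus\iota^*F$ on the K3 cover. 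Via Morita, this translates into an orthogonal decomposition of a Mukai vector on $\tilde{Y}$ of total square $2v_\mathcal{A}^2+\text{error}$; the assumption $N<-\frac{v_\mathcal{A}^2+5}{2}$ is precisely designed to exclude such decompositions (the required summands would need negative Mukai square incompatible with stable factors, or would force $L^2$ small contradicting $L\in H_n$ for $n\leq N$). This gives the desired vanishing, hence smoothness of the expected dimension.

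For the morphism and its étale property, pullback of twisted sheaves by $\pi$ is a well-defined functor sending $\mathcal{A}$-modules to $\pi^*\mathcal{A}$-modules, preserving the Mukai vector (up to the evident pullback) and, as $\pi$ is étale, preserving stability generically. The induced morphism on moduli spaces has differential $\Ext^1_Y(E,E)_0\to\Ext^1_{\tilde{Y}}(\pi^*E,\pi^*E)_0$, which by the projection formula displayed above is the inclusion of the first summand. It is therefore injective, and a dimension count (the expected dimensions differ by a factor of $2$) shows the two tangent spaces coincide, so the map is étale. The two-to-one property is immediate: $E$ and $E\otimes\omega_Y$ are stable with the same Mukai vector modulo torsion (hence the same class $\pi^*v_\mathcal{A}$ after pullback) and satisfy $\pi^*(E\otimes\omega_Y)\cong\pi^*E$ since $\pi^*\omega_Y=\mathcal{O}_{\tilde{Y}}$, while they are non-isomorphic as twisted sheaves on $Y$ for $E$ stable.

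The main obstacle, and the most delicate part, is identifying the image with $\mathrm{Fix}(\iota^*)$. One inclusion is clear: $\iota^*\pi^*E=\pi^*E$ canonically. For the converse, given $F$ with $\iota^*F\cong F$, one must show $F$ descends to a twisted sheaf on $Y$. This is $\mathbb{Z}/2$-equivariant descent along the étale cover $\pi$: a choice of isomorphism $\varphi:\iota^*F\xrightarrow{\sim}F$ is unique up to scalar (by stability and Schur), and the two possible normalisations $\varphi$ and $-\varphi$ correspond exactly to the two preimages, giving $E$ and $E\otimes\omega_Y$. One must verify that the cocycle condition $\iota^*\varphi\circ\varphi=\mathrm{id}$ can always be arranged (up to the sign ambiguity) using that the only obstruction lives in $H^2(\mathbb{Z}/2,\mathbb{C}^*)=0$, and that the descended object is again an $\mathcal{A}$-module — this uses the equivariant structure of $\pi^*\mathcal{A}$ and that the Brauer class $B_Y$ is the one realised by $\mathcal{A}$. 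This equivariant descent argument is the technical heart of the theorem and the place where Reede's analysis in~\cite{reede2024enriquessurfacestrivialbrauer} is doing real work.
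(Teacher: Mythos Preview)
The paper does not prove this theorem; it is quoted verbatim as a result of Reede~\cite{reede2024enriquessurfacestrivialbrauer} and used as a black box. So there is no ``paper's own proof'' to compare against, and your proposal is really an attempt to reconstruct Reede's argument. The outline you give --- Serre duality plus $H^2(\mathcal{O}_Y)=0$ for obstructions, projection formula for the differential, $E\mapsto E\otimes\omega_Y$ for the two sheets, and $\mathbb{Z}/2$-equivariant descent for the identification of the image --- is the right shape, and your final paragraph correctly identifies the descent step as the technical heart.

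There is, however, a genuine gap in how you deploy the hypothesis $N<-\tfrac{v_\mathcal{A}^2+5}{2}$. You use it only to exclude $E\cong E\otimes\omega_Y$ in the smoothness step, and you dismiss stability of $\pi^*E$ with the phrase ``preserving stability generically''. That is not enough: for the morphism \eqref{eq:pullback-twisted} to even be defined as a map of moduli spaces, one needs $\pi^*E$ to be stable for \emph{every} $E$, not generically. On $Y$ the rank-$2$ twisted sheaves are automatically stable because rank-$1$ objects do not exist (Remark~\ref{rk2onlystable}), but on $\tilde{Y}$ the algebra $\pi^*\mathcal{A}$ is trivial, so after Morita $\pi^*E$ becomes an honest rank-$2$ sheaf which can certainly be destabilised by line subbundles. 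Ruling this out is exactly where Reede invokes the numerical bound (this is his Lemma~2.5 and Theorem~2.6, as the paper notes in the proof of Lemma~\ref{caldeslem}): one computes the discriminant of the would-be destabilising summand and finds it forces $c_1(L)^2$ to be too large. Once stability of $\pi^*E$ is established, your smoothness argument becomes a corollary rather than an independent use of the bound: $\pi^*E$ stable $\Rightarrow$ simple $\Rightarrow$ $\Hom_Y(E,E\otimes\omega_Y)=0$ via the projection formula. Your phrasing ``the two tangent spaces coincide'' in the \'etale step is also loose --- the target tangent space is twice as large; what you need is that the differential identifies $\Ext^1_Y(E,E)$ with the $\iota^*$-invariant part, i.e.\ the tangent space of $\mathrm{Fix}(\iota^*)$, and then invoke smoothness of the fixed locus.
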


Since we are on the Beauville locus, $\pi^*\mathcal{A}$ is a trivial algebra that can be identified with $\End(\mathcal{O}_{\tilde{Y}} \oplus L)\cong\End(\mathcal{O}_{\tilde{Y}} \oplus L^{-1})$ with $L$ a line bundle satisfying $\iota^*L \cong L^{-1}$ and $c_1(L)^2 < 4N+2$. In this setting, by the Proposition \ref{MoritaEq}(2), we have $F \mapsto (\mathcal{O}_{\tilde{Y}} \oplus L^{-1}) \otimes F$ is an equivalence from the category of untwisted sheaves to the category of twisted sheaves. Since we assume the gcd condition, this preserves stability and so defines an isomorphism
\begin{equation}
    \label{eq:moduli-trivialisation}
\Theta : M^{\pi^*H}_{\pi^*\mathcal{A}/\tilde{Y}}(\pi^*v_\mathcal{A}) \cong M^{\pi^*H}_{\tilde{Y}}(\pi^*v_\mathcal{A} \otimes e^{c_1(L)/2}).
\end{equation}

We introduce the following operation, following Reede.

\begin{definition}
\label{sigma}
    For any sheaf $F$ on $\tilde{Y}$, we define $\sigma(F) = \iota^*F \otimes L$.
\end{definition}

Reede shows that $\Theta$ intertwines the action of $\iota$ on the left hand side of~\eqref{eq:moduli-trivialisation} and $\sigma$ on the right hand side. Therefore, we can conclude the following.

\begin{proposition}
\label{prp:reede-untwisted}
Let $v_\mathcal{A}$ be a twisted Mukai vector with rank $2$.
Let $Y$ be an Enriques surface with polarisation $H$. Assume that $Y \in B_N$ with $N < -\frac{v_\mathcal{A}^2+5}{2}$. Then there is a morphism
    \[
    M_{\mathcal{A}/Y}^H(v_\mathcal{A}) \to M_{\tilde{Y}}^{\pi^*H}(\pi^*v_\mathcal{A} \otimes e^{c_1(L)/2})
    \]
    which is \'etale two-to-one to its image. The image is given by the fixed locus of $\sigma$. Also, the fixed locus $\operatorname{Fix}(\sigma)\subseteq M_{\tilde{Y}}^{\pi^*H}(\pi^*v_\mathcal{A} \otimes e^{c_1(L)/2})$ is a 
    smooth Lagrangian subscheme if it is not empty
\end{proposition}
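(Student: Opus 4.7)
The plan is to obtain the desired morphism as the composition of the étale two-to-one cover provided by Theorem~\ref{thm:reede} with the Morita isomorphism $\Theta$ of~\eqref{eq:moduli-trivialisation}. Since $\Theta$ is a scheme isomorphism, the composite $M_{\mathcal{A}/Y}^H(v_\mathcal{A}) \to M_{\tilde{Y}}^{\pi^*H}(\pi^*v_\mathcal{A} \otimes e^{c_1(L)/2})$ is automatically étale of degree two onto its image, and the image is the $\Theta$-transport of $\operatorname{Fix}(\iota^*)$. To identify this image with $\operatorname{Fix}(\sigma)$, the crucial step is the compatibility $\Theta \circ \iota^* = \sigma \circ \Theta$.

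I would verify this compatibility directly on objects. On an untwisted sheaf $F$, the inverse Morita equivalence gives $\Theta^{-1}(F) = F \otimes (\mathcal{O}_{\tilde{Y}} \oplus L^{-1})$, so
\[
\iota^* \Theta^{-1}(F) = \iota^*F \otimes (\mathcal{O}_{\tilde{Y}} \oplus \iota^*L^{-1}) = \iota^*F \otimes (\mathcal{O}_{\tilde{Y}} \oplus L),
\]
using $\iota^*L \cong L^{-1}$, which holds on the Beauville locus by Theorem~\ref{thm:Beauville-locus}. On the other hand $\Theta^{-1}(\sigma(F)) = (\iota^*F \otimes L) \otimes (\mathcal{O}_{\tilde{Y}} \oplus L^{-1}) = \iota^*F \otimes (L \oplus \mathcal{O}_{\tilde{Y}})$, matching the previous expression after reordering the direct-sum factors. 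Hence the image of the composition is $\operatorname{Fix}(\sigma)$.

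Smoothness of $\operatorname{Fix}(\sigma)$ is then immediate: by Theorem~\ref{thm:reede} the source $M_{\mathcal{A}/Y}^H(v_\mathcal{A})$ is smooth of the expected dimension, and an étale morphism has smooth image étale-locally. For the Lagrangian property I would use that the K3 moduli space $M_{\tilde{Y}}^{\pi^*H}(\pi^*v_\mathcal{A} \otimes e^{c_1(L)/2})$ is holomorphic symplectic in Mukai's sense, with its symplectic form built from Serre duality and the generator of $H^0(\tilde{Y}, \omega_{\tilde{Y}})$. Because $Y$ has no global $2$-form, $\iota$ acts by $-1$ on $H^{2,0}(\tilde{Y})$, hence $\iota^*$ on the moduli space is anti-symplectic, while tensoring by the line bundle $L$ preserves the symplectic form; together these show $\sigma$ is anti-symplectic. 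The fixed locus of an anti-symplectic involution on a smooth holomorphic symplectic variety is isotropic, and a dimension check, using that the étale cover forces $\dim \operatorname{Fix}(\sigma) = \dim M_{\mathcal{A}/Y}^H(v_\mathcal{A})$, which is half of $\dim M_{\tilde{Y}}^{\pi^*H}(\pi^*v_\mathcal{A} \otimes e^{c_1(L)/2})$, upgrades isotropic to Lagrangian.

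The main obstacle is the bookkeeping in the anti-symplectic calculation: one must track how Mukai's symplectic form transforms under both $\iota^*$ and $-\otimes L$, and in particular confirm that $\iota$ acts by $-1$ on $H^0(\tilde{Y},\omega_{\tilde{Y}})$, which is where the fixed-point-freeness of $\iota$ and the identification $Y = \tilde{Y}/\iota$ enter. Once this sign is pinned down, the remaining assertions are formal consequences of Theorem~\ref{thm:reede} and~\eqref{eq:moduli-trivialisation}.
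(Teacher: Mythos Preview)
Your proposal is correct and follows the same route as the paper: compose the morphism of Theorem~\ref{thm:reede} with the Morita isomorphism $\Theta$ of~\eqref{eq:moduli-trivialisation}, and use that $\Theta$ intertwines $\iota^*$ and $\sigma$. The paper simply cites Reede for both the intertwining and the Lagrangian property, whereas you unpack these arguments explicitly (the direct computation $\iota^*\Theta^{-1}(F)\cong\Theta^{-1}(\sigma(F))$ and the anti-symplectic/dimension-count argument); both are correct elaborations of what Reede proves.
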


\begin{remark}
\label{rem:cover-odd-even}
In \cite{Kim_1998}, Kim considered the moduli of $\mu$-stable untwisted sheaves (of any rank) on Enriques surface and shown that the image of the pullback map to K3 is a Lagrangian subvariety of the moduli of stable sheaves on K3. The image of this pullback map is the fixed locus of the involution $\iota^\ast$ on the moduli of K3.
The pullback map is a two-to-one from the smooth locus of moduli on Enriques.

Assume now that moduli on Enriques is smooth and we have an unbranched two-to-one covering from the moduli on Enriques to $\operatorname{Fix}(\iota^\ast)$.
    We note that the behavior of the two-to-one cover depends on the parity of the rank of the sheaves under consideration. Note that for any stable sheaf $E$, the sheaf $E \otimes \omega_Y$ is still $\mu$-stable. This has an interesting interaction with the fact that the moduli space in the Enriques case is a disjoint union of moduli spaces with fixed determinant.

    In the odd case, We have $\det(E\otimes \omega_Y)=\det(E)\otimes \omega_Y$, the determinant is changed by a factor $\omega_Y$. Hence, the two-to-one cover induced by the pullback is a trivial one: we have
    \[
    M^H_Y(2k+1, L, n) \sqcup M^H_Y(2k+1, L \otimes \omega_Y, n) \to \mathrm{Fix}(\iota^*)
    \]
    where both components are isomorphic to the image. However, in the even case, the determinant stays the same. The two preimages of a point in $\operatorname{Fix}(\iota^\ast)$ by the covering map are both in $M_Y^H(2k, L, n)$ or are both in $M_Y^H(2k, L\otimes \omega_Y, n)$.  This means that the images of $M_Y^H(2k, L, n)$ and $M^H_Y(2k, L \otimes \omega_Y, n)$ in $\mathrm{Fix}(\iota^*)$ are actually disjoint and so $\mathrm{Fix}(\iota^*)$ splits up as the union of two disjoint components, both of which admit a two-to-one cover from the Enriques moduli space.

    This situation mirrors precisely the case for rank $2$ twisted sheaves. One may also hope this mirrors for moduli of higher rank twisted sheaves. The structure of the covering maps respectively for $M^H_Y(2, L \otimes \omega_Y, n)$ and $M^H_Y(2, L, n)$ is an interesting problem.
\end{remark}

\begin{remark}
\label{odddimrmk}
Within the setting of Proposition \ref{prp:reede-untwisted}, let us denote the Mukai vector $\pi^*v_\mathcal{A} \otimes e^{c_1(L)/2}$ by $\overline{v}$. $\overline{v}$ has the form $\overline{v}=(2,\overline{D}+L,n)$ where $\overline{D}$ means the pullback of some $D\in \operatorname{NS}(Y)$ to $\widetilde{Y}$ by $\pi$ and $n$ is an integer. Then the dimension of $M_{\widetilde{Y}}^{\pi^\ast H}(\overline{v})$ is given by 
\begin{align}
   \dim M_{\widetilde{Y}}^{\pi^\ast H}(\overline{v}) -2&= (\overline{v},\overline{v})\\
    &=-\int_X(2,\overline{D}+L,n)\cdot (2,-\overline{D}-L,n)\\
    &=-4n+(\overline{D}+L)^2\\
    &=-4n+\overline{D}^2+L^2 \equiv 2 \mod 4 
\end{align} 
Therefore $\dim M_{\widetilde{Y}}^{\pi^\ast H}(\overline{v})$ is always divisible by $4$, thus $\dim \operatorname{Fix}(\sigma)$ is an even number since it is a Lagrangian. This implies $\dim M^H_{\mathcal{A}/Y}(v_\mathcal{A})$ is an even number. This means with assumptions in Proposition \ref{prp:reede-untwisted}, the moduli spaces $\dim M^H_{\mathcal{A}/Y}(v_\mathcal{A})$ with odd dimensions are empty and have topological Euler characteristic $0$.
\end{remark}

The goal of this paper is to compute $e^{\vir}(M^H_{\mathcal{A}/Y}(v_\mathcal{A}))$. To do this, we will first compute it in the setting of the Proposition \ref{prp:reede-untwisted} and then use the deformation invariance of the virtual Euler characteristic to deform to other Enriques surfaces. In the setting of the Proposition \ref{prp:reede-untwisted}, $e^{\vir}(M^H_{\mathcal{A}/Y}(v_\mathcal{A}))=e(M^H_{\mathcal{A}/Y}(v_\mathcal{A})) = 2\cdot e(\operatorname{Fix}(\sigma))$ since $M^H_{\mathcal{A}/Y}(v_\mathcal{A})$ is smooth and it is the two-to-one cover of $\operatorname{Fix}(\sigma)$. To compute $e(\operatorname{Fix}(\sigma))$, the ideal will be using the Lefschetz formula in the following proposition. 

\begin{proposition}[Lefschetz formula]
\label{lefschetz}
Let $X$ be a smooth proper variety over $\C$ and $f$ an endomorphism. Assume the fixed locus of $f$ is smooth and proper, let $X^f$ denote the fixed locus then:
\begin{equation}
e(X^f)=\sum_{i=0}^{2\operatorname{dim} X}(-1)^i \operatorname{Tr}\left(f^*: H^i( X,\C) \rightarrow H^i( X,\C) \right),
\end{equation}
    where $f^\ast:H^\ast(X,\C)\to H^\ast(X,\C)$ is the pullback.
\end{proposition}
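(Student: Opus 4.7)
The plan is to follow the classical graph–diagonal intersection argument combined with Fulton's excess intersection formula. First, I would reduce the right-hand side of the asserted formula to a cohomological intersection on $X\times X$. By the Künneth formula together with Poincaré duality, choosing a basis $\{e_j^{(i)}\}$ of $H^i(X,\C)$ with Poincaré-dual basis $\{\check{e}_j^{(i)}\}$ of $H^{2\dim X-i}(X,\C)$, the diagonal class expands as $[\Delta]=\sum_i(-1)^i\sum_j e_j^{(i)}\otimes \check{e}_j^{(i)}$. Pairing this with $[\Gamma_f]$, whose Künneth components are dictated by the matrices of $f^*$ in the bases above, produces precisely $\sum_i(-1)^i\operatorname{Tr}(f^*|_{H^i(X,\C)})$. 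Thus the right-hand side equals the cohomological intersection number $[\Gamma_f]\cdot[\Delta]$ on $X\times X$.

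Second, I would interpret this intersection geometrically. Set-theoretically $\Gamma_f\cap\Delta=X^f$, embedded via $x\mapsto(x,x)$, but the intersection is far from transverse: its expected dimension is zero while the actual dimension is $\dim X^f$. Under the hypothesis that $X^f$ is smooth (and clean, which in the intended application to $f=\iota^*$ of finite order holds automatically by local linearization of the group action), Fulton's excess intersection formula yields
\[
[\Gamma_f]\cdot[\Delta]\;=\;\int_{X^f} c_{\mathrm{top}}(E),
\]
where $E$ is the excess normal bundle, defined as the cokernel of the natural inclusion $N_{X^f/\Gamma_f}\hookrightarrow N_{\Delta/X\times X}|_{X^f}$ of bundles on $X^f$.

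Third, I would identify $E$ with $TX^f$. Via the first projection $\Gamma_f\cong X$, one has $N_{X^f/\Gamma_f}\cong N_{X^f/X}$, while $N_{\Delta/X\times X}|_{X^f}\cong TX|_{X^f}$. Cleanness of the fixed locus yields the splitting $TX|_{X^f}\cong TX^f\oplus N_{X^f/X}$, so that the excess bundle is canonically $TX^f$. The Chern–Gauss–Bonnet theorem then gives $\int_{X^f} c_{\mathrm{top}}(TX^f)=e(X^f)$, concluding the argument.

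The main obstacle is subtle but standard: smoothness of the scheme $X^f$ alone does not force the cleanness condition that $TX^f$ equals the $1$-eigenspace of $df$ on $TX|_{X^f}$, which is exactly what makes the excess bundle be $TX^f$ rather than some proper quotient of it. For a finite-order automorphism of a smooth variety in characteristic zero, as in the eventual application to $f=\iota^*$ of order two, local linearization of the action renders cleanness automatic; in greater generality one would need to add cleanness as an explicit hypothesis alongside the smoothness of $X^f$.
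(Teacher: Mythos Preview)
Your proposal is correct and follows essentially the same route as the paper: reduce the alternating trace to the intersection number $[\Gamma_f]\cdot[\Delta]$ on $X\times X$ (the paper cites Fulton, Example 16.1.15, for this), and then evaluate that intersection via Fulton's excess intersection formula (the paper cites Fulton, Proposition 16.2). The paper phrases the excess computation as $\int_{X^f} c(T_\Delta)\cdot c(N_{X^f/\Delta})^{-1}$, which by the normal bundle sequence collapses to $\int_{X^f} c(TX^f)=e(X^f)$; you instead identify the excess bundle directly with $TX^f$, but this is the same calculation in different clothing.

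One point worth flagging: you are more careful than the paper about the cleanness hypothesis. The paper's statement assumes only that $X^f$ is smooth, and its proof implicitly uses that $N_{X^f/\Delta}$ sits as a subbundle of $T\Delta|_{X^f}$ with quotient $TX^f$, i.e.\ cleanness. As you correctly note, this is automatic for finite-order automorphisms in characteristic zero (the only case actually used later, where $f$ is an involution), but is an extra hypothesis for a general endomorphism.
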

\begin{proof}
By Chapter 16 of \cite{fulton_1998_intersection},
the graph of $f$ gives a correspondence $F$ from $X$ to $X$ which is a cycle on $X\times X$. Let $\Delta$ be the cycle of the diagonal. Then, by Example 16.1.15 in \cite{fulton_1998_intersection}, the intersection class $F\cdot \Delta$ is given by:
\begin{equation}
\int_{X \times X} F \cdot \Delta=\sum_i(-1)^i \operatorname{Tr}\left(f^*: H^i( X,\C) \rightarrow H^i( X,\C) \right) .
\end{equation}

Proposition 16.2 in \cite{fulton_1998_intersection} shows:
\begin{equation}
\int_{X \times X} F \cdot \Delta= \int_{\Delta=X}  c(T_\Delta) \cdot c(N_{F \cap \Delta/\Delta})^{-1} 
= e(F \cap \Delta)=e(X^f),
\end{equation}
where $N_{F \cap \Delta/\Delta}$ denotes the normal bundle of $F \cap \Delta$ in $\Delta$.
\end{proof}

The Lefschetz formula will reduce the problem to calculating the trace of $\sigma^\ast:H^\ast(M,\C)\to H^\ast(M,\C)$, where $M$ denote the space where $\sigma$ acts. However this trace $\operatorname{tr}(\sigma^\ast)$ is still difficult to compute. To calculate it we want to find some ring isomorphism $\phi:H^\ast(M,\C)\to H^\ast(\widetilde{Y}^{[N]},\C)$, where $\widetilde{Y}^{[N]}$ denotes the Hilbert scheme of $N$ points on the covering K3 surface $\widetilde{Y}$ with the same dimension as $M$. 
We also want to find the following commutative diagram:
    \[\begin{tikzcd}
H^\ast(M,\C) \arrow{r}{\sigma^\ast} \arrow[swap]{d}{\phi} & H^\ast(M,\C) \arrow{d}{\phi} \\%
H^\ast(X^{[N]},\C) \arrow{r}{\iota}& H^\ast(X^{[N]},\C),
\end{tikzcd}\]   
such that the fixed locus of the bottom action $\iota$ is well understood. In fact, this ring isomorphism $\phi$ are constructed via the Markman operators, which will be introduced in the next section.

\section{Markman operators} \label{sec:markman-operators}
In \cite{markman2005monodromy}, Markman defined operators relating cohomology rings of two K3 surfaces. In this section, we will recall some facts about Markman operators, especially the universality result of \cite{Oberdieck_2022} and some applications in \cite{wang2024virasoroconstraintsk3surfaces}. This section will not involve Enriques surfaces and so we let $X$ be an arbitrary K3 surface.

Let $X$ be a non-singular and projective K3 surface. There is a bilinear form, called the $\textit{Mukai pairing}$, on $\Lambda:= H^\ast(X,\mathbb{Z})$ defined as:
\begin{align*}
    (x,y):=-\int_X x^\vee y, 
\end{align*}
where $\bullet^\vee$ is defined as follows: if one writes $x = (r,D,n)$ as the decomposition of degree then $x^\vee = (r,-D,n)$. This pairing is symmetric, unimodular, of signature $(4,20)$ and the resulting lattice is called the $\textit{Mukai lattice}$.

For a coherent sheaf $\mathcal{F}$ on $X$, its Mukai vector is defined to be $v(\mathcal{F})=\operatorname{ch}(\mathcal{F})\cdot \sqrt{\operatorname{td}_X}$.
Let $M:=M^H_{X}(v)$ be the moduli space of $H$-semistable sheaves on $X$ with Mukai vector $v\in\Lambda$. For a $v\in\Lambda$, I will write $v=(v_0,v_1,v_2)$ to denote components in even degrees of $H^\ast(X,\mathbb{Z})$.  Following \cite{markman2005monodromy}, we define:
\begin{definition}
A non zero class $v\in\Lambda$ is called \textit{effective}, if $(v,v) \geq -2$,
$v_0 \geq 0$, and the following conditions hold. If $v_0 = 0$, $v_1$ needs to be the class of an effective (or trivial) divisor on $X$. If both $v_0$ and $v_1$ vanish, $v_2$ needs to satisfy $v_2 >0$. The class $v$ is \textit{primitive}, if it is not a multiple of a class
in $\Lambda$ by an integer larger than 1.
\end{definition}
For a primitive and effective $v$ there is a polarisation $H$, called $v$-suitable in \cite{markman2005monodromy}, such that $M$ does not contain any strictly semistable sheaves and is smooth projective and also admits a (twisted) universal sheaf. In this case we have: $\dim M=(v,v)+2$.

Define the morphism $B:H^\ast(X,\mathbb{Q})\to H^\ast(M,\mathbb{Q})$ as
\begin{align}
\label{Bmorphism}
    B(x)=\pi_{M\ast}(u_v\cdot x^\vee),
\end{align}
 with \[u_v:=\exp\left( \frac{\theta_{\mathcal{F}}(v)}{(v,v)}\right)\cdot 
\operatorname{ch}(\mathcal{F})\cdot\sqrt{\text{td}_X} \in H^\ast(M\times X,\mathbb{Q}), \]
and 
\[\theta_\mathcal{F}(x)=[\pi_{M\ast}(\operatorname{ch}(\mathcal{F})\pi_X^\ast(\sqrt{\text{td}_X}\cdot x^\vee))]_2,\]
where $[ \bullet ]_k$ means take the real degree $k$ component of a cohomology class.
In the definition, some pullbacks $\pi_M^*$, $\pi_X^*$ have been suppressed. One can check that $B$ is independent of the choice of the (twisted) universal sheaf $\mathcal{F}$. We also write $B_k(x)$ for the component in cohomological degree $2k$.

Let $X_1$ and $X_2$ be two non-singular projective K3 surfaces with polarizations $H_1,H_2$. Let $g:H^\ast(X_1,\mathbb{Z})\to H^\ast(X_2,\mathbb{Z})$ be an isometry of Mukai lattices, assume $v_1$, $v_2$ are two vectors in the Mukai lattice satisfying the assumptions of the previous paragraph and $v_2=g(v_1)$. Let $M_i:=M_{H_i}(v_i)$ for $i=1,2$. 
Markman defined the transformation $\gamma(g): H^\ast(M_1,\mathbb{C}) \to H^\ast(M_2,\mathbb{C})$ in \cite{markman2005monodromy}.
The main properties of $\gamma(g)$ are given in the following theorem: 

\begin{theorem}[Markman]
\label{markmantheorem}
Let $X_1,X_2$ and $v_1,v_2$ as above. For any isometry $g:H^\ast(X_1,\mathbb{C})\to H^\ast(X_2,\mathbb{C})$ such that $g(v_1)=v_2$, $\gamma(g)$ is the unique operator such that: 
\begin{enumerate}
\item[$\mathrm{(i)}$] $\gamma(g)$ is a degree-preserving ring isomorphism and is an isometry with respect to the Poincaré pairing: $\langle x,y\rangle = \int_M xy$ for all $x,y \in H^\ast(M,\mathbb{Q})$. 
\item[$\mathrm{(ii)}$] $\gamma(g_1)\circ \gamma(g_2) = \gamma(g_1g_2)$ and $\gamma(g)^{-1} = \gamma(g^{-1})$(if it makes sense), where $g_1,g_2$ are two isometries.
\item[$\mathrm{(iii)}$] $\gamma(g)\left(c_k(T_{M_1})\right)=c_k(T_{M_2})$.
\item[$\mathrm{(iv)}$]
$\gamma(g)(B(c))=B(g(c)))$ for all $c\in H^\ast (X_1,\mathbb{Q})$ 
\end{enumerate}
\end{theorem}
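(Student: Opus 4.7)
The plan is to derive uniqueness from a single structural input---Markman's generation theorem, which asserts that $H^\ast(M,\Q)$ is generated as a ring by the classes $B(c)$ for $c\in H^\ast(X,\Q)$ together with the tangent bundle Chern classes $c_k(T_M)$. Granted this, the ring homomorphism part of (i) forces $\gamma(g)$ to be determined by its action on generators, while (iii) and (iv) prescribe that action, so uniqueness follows. Property (ii) is then immediate on generators, since both $\gamma(g_1)\circ\gamma(g_2)$ and $\gamma(g_1 g_2)$ send $B(c)$ to $B((g_1 g_2)(c))$ and match on tangent Chern classes, hence agree on the whole ring.

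For existence, the naive approach of defining $\gamma(g)$ on generators by (iii) and (iv) and extending multiplicatively requires verifying well-definedness, i.e.\ that every polynomial relation among the generators in $H^\ast(M_1,\Q)$ has a valid image in $H^\ast(M_2,\Q)$ under $g$. I would attack this geometrically in two stages. First, in the diagonal case $X_1 = X_2$, $v_1 = v_2$, with $g$ a monodromy operator of the pair $(X,v)$, realise $\gamma(g)$ as parallel transport in a family of polarised K3 surfaces carrying a relative moduli space; then the ring structure is transported automatically by functoriality, and (iii)--(iv) hold because both the tangent bundle and the universal sheaf deform along the family. Second, for a general isometry $g$, connect $(X_1,v_1)$ and $(X_2,v_2)$ by a smooth family of polarised K3 surfaces with a horizontal Mukai class, produce a parallel-transport isomorphism along a path in the base, and compose with monodromy operators from the diagonal case to cover an arbitrary lattice isometry.

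Degree preservation in (i) is then clear on generators, since $B_k$ sits in cohomological degree $2k$ and $g$ preserves the grading of the Mukai lattice. The Poincar\'e isometry reduces via a Mukai-type Hirzebruch--Riemann--Roch formula, which expresses $\int_M \alpha\beta$ in terms of $B$-classes, $v$, and $\text{td}_X$, to the fact that $g$ preserves the Mukai pairing. The main obstacle I foresee is the second stage of existence: realising an arbitrary lattice isometry $g$ as a composition of parallel transports and monodromies of relative moduli spaces. This rests on global Torelli-type results for the hyperk\"ahler manifolds $M^H_X(v)$, and it is here that I would appeal to substantial external machinery rather than attempt to argue from scratch; the rest of the proof then consists of bookkeeping on generators.
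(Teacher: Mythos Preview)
The paper does not prove this theorem at all: it is stated as a result of Markman, with a citation to \cite{markman2005monodromy}, and is used as a black box in the subsequent arguments. So there is no ``paper's own proof'' to compare against.

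That said, your sketch is a reasonable outline of how one would approach Markman's result. The uniqueness argument is clean and correct: once you know that $H^\ast(M,\Q)$ is generated as a ring by the $B(c)$ together with the Chern classes of the tangent bundle (this is indeed a theorem of Markman, cited elsewhere in the paper as \cite{MARKMAN2007622}), properties (i), (iii), (iv) pin down $\gamma(g)$ on generators and hence everywhere, and (ii) follows formally. For existence, your two-stage strategy via parallel transport and monodromy is in the right spirit, but Markman's actual construction is more direct: he writes down an explicit correspondence class in $H^\ast(M_1\times M_2)$ built from the universal classes $u_{v_i}$ and the isometry $g$, and then has to work hard to show that the induced map on cohomology is a ring homomorphism. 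The monodromy interpretation is a consequence rather than the definition. Your identification of the ``main obstacle'' as realising an arbitrary lattice isometry by geometric operations is accurate, and you are right that this is where the heavy lifting (density of monodromy, Torelli-type input) enters; but be aware that a fully rigorous treatment along your lines would require substantial care in handling the twisted universal sheaves and the normalisations that make $B$ independent of choices.
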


Using Markman's operator, Oberdieck proved a universality property for integrals over $M$; this roughly means that the integral $\int_M P(B_i(x_j),c_l(T_M))$ only depends on the polynomial $P$, the dimension of $M$ and pairings $(v,x_i)$, $(x_i,x_j)$ for all $i,j$ i.e.~the intersection matrix
\begin{align*}
    \begin{pmatrix}
(v,v) & (v,x_i)_{i=1}^k \\
(x_i,v)_{i=1}^k & (x_i,x_j)_{i,j=1}^k
\end{pmatrix}.
\end{align*}

\begin{theorem}[Oberdieck \cite{Oberdieck_2022}]
\label{theoremUnivers}
let $P(t_{ij},u_r)$ be a polynomial depending on the variables
$t_{ij}$, $j = 1,...,k,$   $i \geq 0$, and $u_l$, $l\geq 1$. Let also $A = (a_{ij})^k_{i,j=0}$ be a $(k + 1) \times (k + 1)$-matrix. Then there exists $I(P,A)\in \mathbb{Q}$ ($I(P,A)$ is a rational number only depending on $P,A$) such that for any $M=M_H(v)$ with $dim(M)>2$ and for any $x_1,...,x_k\in \Lambda$ with 
\[
A=\begin{pmatrix}
(v,v) & (v,x_i)_{i=1}^{k} \\
(x_i,v)_{i=1}^k & (x_i,x_j)_{i,j=1}^k
\end{pmatrix},\]
we have
\[\int_M P(B_i(x_j),c_l(T_M)) = I(P,A).\]
\end{theorem}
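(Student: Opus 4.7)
The plan is to leverage Markman's operator $\gamma(g)$ to show that any two setups with the same intersection matrix $A$ produce the same integral. Concretely, fix a polynomial $P$ and a matrix $A$, and consider two tuples $(X_1, H_1, v_1, x_1^{(1)}, \ldots, x_k^{(1)})$ and $(X_2, H_2, v_2, x_1^{(2)}, \ldots, x_k^{(2)})$ both giving rise to the matrix $A$, where each $v_s$ is primitive, effective with $(v_s,v_s)>0$ and $H_s$ is $v_s$-suitable. I would aim to construct an isometry $g: H^\ast(X_1, \C) \to H^\ast(X_2, \C)$ sending $v_1 \mapsto v_2$ and $x_j^{(1)} \mapsto x_j^{(2)}$, so that Theorem \ref{markmantheorem} produces a ring isomorphism $\gamma(g): H^\ast(M_1, \C) \to H^\ast(M_2, \C)$ under which the two integrands correspond.

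The construction of $g$ would use Witt's extension theorem for non-degenerate quadratic spaces. The base-changed Mukai lattices $H^\ast(X_i, \C)$ are non-degenerate $\C$-valued symmetric bilinear forms of the same dimension $24$. The equality of the intersection matrices implies that the $\C$-linear map on $\mathrm{span}_{\C}(v_1, x_1^{(1)}, \ldots, x_k^{(1)})$ defined by $v_1 \mapsto v_2$, $x_i^{(1)} \mapsto x_i^{(2)}$ is a well-defined partial isometry — any linear relation among the source vectors is automatically sent to a linear relation among the target vectors because the Gram matrices agree. Witt's theorem then extends this partial isometry to a full $\C$-linear isometry $g$ of the Mukai lattices.

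With $g$ in hand, Markman's operator $\gamma(g)$ provides a degree-preserving ring isomorphism $H^\ast(M_1, \C) \to H^\ast(M_2, \C)$. Property $\mathrm{(iv)}$ of Theorem \ref{markmantheorem} yields $\gamma(g)(B_i(x_j^{(1)})) = B_i(g(x_j^{(1)})) = B_i(x_j^{(2)})$, while property $\mathrm{(iii)}$ yields $\gamma(g)(c_l(T_{M_1})) = c_l(T_{M_2})$. Since $\gamma(g)$ is a ring homomorphism, it intertwines the polynomial expressions,
\[
\gamma(g)\bigl(P(B_i(x_j^{(1)}), c_l(T_{M_1}))\bigr) = P(B_i(x_j^{(2)}), c_l(T_{M_2})).
\]
Moreover, $\gamma(g)$ is an isometry of the Poincar\'e pairing and preserves the identity class (being a unital graded ring isomorphism), hence $\int_{M_2} \gamma(g)(\alpha) = \langle \gamma(g)(\alpha), \gamma(g)(1)\rangle = \langle \alpha, 1 \rangle = \int_{M_1} \alpha$ for every top-degree class $\alpha$. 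Defining $I(P, A)$ as this common value yields the theorem.

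The main expected obstacle is the careful application of Witt's theorem when the vectors $v_1, x_1^{(1)}, \ldots, x_k^{(1)}$ fail to be linearly independent; one must verify that the Gram-matrix hypothesis ensures consistency of the partial isometry across any linear relations. A complementary minor point is to check that for each admissible $A$ at least one geometric realization $(X, H, v, x_j)$ exists, so that $I(P, A)$ is not vacuously defined; this follows from surjectivity of the period map for K3 surfaces together with the existence of primitive effective Mukai vectors in sufficiently rich Picard lattices. Rationality of $I(P, A)$ is then automatic from any such realization, since the integrands are algebraic cycles on smooth projective varieties defined over $\Q$.
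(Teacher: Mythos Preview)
Your overall strategy---construct an isometry $g$ taking one configuration to the other and then invoke Theorem~\ref{markmantheorem}---is exactly the route the paper sketches. However, there is a genuine gap at the point where you claim the partial isometry is well-defined.

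You assert that ``any linear relation among the source vectors is automatically sent to a linear relation among the target vectors because the Gram matrices agree.'' This is false in general. If $\sum a_i x_i^{(1)}=0$, the Gram condition only gives
\[
\Bigl\langle \sum a_i x_i^{(2)}, x_j^{(2)}\Bigr\rangle = \sum a_i \langle x_i^{(2)},x_j^{(2)}\rangle = \sum a_i \langle x_i^{(1)},x_j^{(1)}\rangle = 0 \quad\text{for all }j,
\]
so $\sum a_i x_i^{(2)}$ lies in the radical of $\operatorname{Span}(x_j^{(2)})$; it need not vanish unless that span is non-degenerate. Concretely, take $v$ with $(v,v)=2$, an isotropic $w$ with $(v,w)=1$, and set $x_1=w$, $x_2=2w$ on one side, while on the other side take $x_1'=(1,0,0)$ and $x_2'=(2,D,0)$ with $D\neq 0$, $D^2=0$. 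Both tuples have Gram matrix $\begin{psmallmatrix}2&1&2\\1&0&0\\2&0&0\end{psmallmatrix}$, but no linear map can send $(x_1,x_2)$ to $(x_1',x_2')$ since the first pair is dependent and the second is not. So your ``main expected obstacle'' is a real obstruction, not a detail to be checked.

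The paper's proof (following Oberdieck) repairs exactly this point, and this is where the hypothesis $\dim M>2$ is actually used. One first shows that the $x_i$ may be replaced by classes $y_i\in\Lambda_{\mathbb C}$ with the \emph{same} intersection matrix and the \emph{same} value of the integral, but such that $\operatorname{Span}(v,y_1,\dots,y_k)$ is non-degenerate. (When the Gram matrix $A$ is singular this forces the $y_i$ to become linearly dependent, collapsing the span to a smaller non-degenerate subspace.) Only after this reduction does Lemma~\ref{Oberdlemma} apply: non-degeneracy on both sides is precisely what guarantees that linear relations transfer, so the map $y_i^{(1)}\mapsto y_i^{(2)}$ is well-defined and Witt extends it. From there your computation with $\gamma(g)$ goes through verbatim.

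In short, the missing ingredient is the perturbation-to-non-degenerate step; without it the isometry $g$ may simply fail to exist.
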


Before sketching the proof, let us first recall the following lemma in \cite{Oberdieck_2022}, which will be used later.
\begin{lemma}[Lemma 2.13 of \cite{Oberdieck_2022} ]
\label{Oberdlemma}
    Let $V$ be a finite-dimensional $\mathbb{C}$-vector space with a $\mathbb{C}$-linear inner product $\langle\bullet,\bullet\rangle$(latter when use this lemma, we will take the inner product induced by the Mukai pairing). Let $v_1,\dots,v_k\in V$ and $w_1,\dots,w_k\in V$ be lists of vectors such that
    \begin{enumerate}
    \item[$\mathrm{(i)}$] $\operatorname{Span}(v_1,\dots,v_k)$ is non-degenerate with respect to $\langle\bullet,\bullet\rangle$,
    \item[$\mathrm{(ii)}$] $\operatorname{Span}(w_1,\dots,w_k)$ is non-degenerate with respect to $\langle\bullet,\bullet\rangle$, 
    \item[$\mathrm{(iii)}$] $\langle v_i,v_j \rangle = \langle w_i,w_j \rangle$ for all $i,j$. 
    \end{enumerate}
    Then there exists an isometry $\phi:V\to V$ such that $\phi(v_i)=w_i$ for all $i$.
 \end{lemma}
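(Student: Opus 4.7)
The plan is to split the construction of $\phi$ into two pieces: first build it on the subspace $U := \operatorname{Span}(v_1,\dots,v_k)$ by sending $v_i \mapsto w_i$, then extend to all of $V$ using the classification of symmetric bilinear forms over $\mathbb{C}$. The entire content of the lemma lies in the first step; the second is essentially Witt's extension theorem, which is routine in our setting.

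First I would verify that the assignment $v_i \mapsto w_i$ extends to a well-defined linear map $\phi_0 : U \to W := \operatorname{Span}(w_1,\dots,w_k)$. Suppose $\sum_i c_i v_i = 0$. Pairing with $v_j$ gives $\sum_i c_i \langle v_i, v_j\rangle = 0$ for every $j$, and by hypothesis (iii) the same identity holds with $w$'s in place of $v$'s: $\sum_i c_i \langle w_i, w_j\rangle = 0$ for all $j$. Thus $\sum_i c_i w_i$ lies in the radical of the pairing restricted to $W$, which by non-degeneracy (ii) vanishes, so $\sum_i c_i w_i = 0$. By symmetry, swapping the roles of the $v$'s and the $w$'s and using hypothesis (i), one obtains that $\phi_0$ is injective. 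Hence $\phi_0$ is a linear isomorphism between the two spans, and the identity $\langle \phi_0(v_i), \phi_0(v_j)\rangle = \langle v_i, v_j\rangle$ is exactly (iii), so $\phi_0$ is an isometry.

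Next I would extend $\phi_0$ to an isometry of all of $V$. Since $U$ is non-degenerate we have an orthogonal decomposition $V = U \oplus U^\perp$, and similarly $V = W \oplus W^\perp$. Because $\dim U = \dim W$ we also have $\dim U^\perp = \dim W^\perp$, and moreover the restrictions of the pairing to $U^\perp$ and to $W^\perp$ have the same rank (each equal to the rank of $\langle\bullet,\bullet\rangle$ on $V$ minus $\dim U$). Over the algebraically closed field $\mathbb{C}$, symmetric bilinear forms on finite-dimensional vector spaces are classified up to isometry by their dimension and rank, so one can choose an isometry $\phi_1 : U^\perp \to W^\perp$. The desired global map is $\phi := \phi_0 \oplus \phi_1$, which is an isometry of $V$ sending $v_i$ to $w_i$. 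The only real obstacle is the well-definedness argument in Step 1, which requires invoking (ii) and (iii) together to rule out hidden linear relations among the $w_i$ that are not shared by the $v_i$; once this is in place, the extension step uses only the elementary classification of non-degenerate symmetric bilinear forms over $\mathbb{C}$.
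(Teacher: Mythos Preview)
Your argument is correct and complete. The paper does not give its own proof of this lemma: it is simply recalled as Lemma~2.13 of \cite{Oberdieck_2022} and used as a black box, so there is nothing to compare against beyond noting that your well-definedness step (using (ii) and (iii) to kill relations) followed by Witt-type extension via the classification of symmetric bilinear forms over $\mathbb{C}$ is exactly the standard route one would expect for such a statement.
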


The idea of the proof of Theorem \ref{theoremUnivers} is the following: Given the data $(M_X^H(v),x_i)$ as above, Oberdieck \cite{Oberdieck_2022} shows that, assuming $\operatorname{dim}(M)>2$, there exists $y_i\in \Lambda_{\mathbb{C}}$ which have the same intersection matrix as above, and satisfy
\[\int_{M_X^H(v)}P(B_i(x_j),c_l(T_M))=\int_{M_X^H(v)}P(B_i(y_j),c_l(T_M))\]
and $\operatorname{Span}(v,y_1,\dots,y_k)$ is a non-degenerate subspace of $\Lambda_{\mathbb{C}}$ (i.e. the restriction of the inner product of $\Lambda_{\mathbb{C}}$, induced by the Mukai pairing, onto the subspace is non-degenerate).

Therefore, given two arrays of vectors $\{v,x_1,\dots,x_k \}$ and $\{v',x_1',\dots,x_k'\}$ having the same intersection matrix, we can always assume that $\operatorname{Span}(v,x_1,\dots,x_k)$ and $\operatorname{Span}(v',x_1',\newline\dots,x_k')$ are non-degenerate. By Lemma \ref{Oberdlemma}, there exists an isometry $g:H^\ast(X,\mathbb{C})\to H^\ast(X',\mathbb{C})$ taking $(v,x_1,\dots,x_k)$ to $(v'=g(v),x_1'=g(x_1),\dots,x_k'=g(x_k))$. 
 Therefore by the properties of the Markman operator and morphism $B$ one has:
 \begin{align*}
     \int_{M_X^H(v)} P(B_i(x_j),c_l(T_{M(v)})) &=  \int_{M_{X'}^{H'}(v)} \gamma(g)P(B_i(x_j),c_l(T_{M(v')}))\\
     &=  \int_{M_{X'}^{H'}(v)} P(B_i(gx_j),c_l(T_{M(v')}))\\
     &=  \int_{M_{X'}^{H'}(v)} P(B_i(x_j'),c_l(T_{M(v')})).
 \end{align*}

In the next section, we will need an application of the Theorem~\ref{theoremUnivers} 
in \cite{wang2024virasoroconstraintsk3surfaces}. We start by introducing the notion of descendent algebra. The descendent algebra $\mathbb{D}^X$ is the commutative algebra generated by symbols of the form:
\[\ch{i}{\gamma}\quad\text{ for }i\geq 0, \gamma\in H^\ast(X,\C)\] subject to the linearity relations\[\ch{i}{\lambda_1\gamma_1+\lambda_2\gamma_2}=\lambda_1\ch{i}{\gamma_1}+\lambda_2\ch{i}{\gamma_2}\] for $\lambda_1,\lambda_2\in \mathbb{C}$. 

Let $v$ be a Mukai vector satisfying the assumptions at the beginning of the Section \ref{sec:markman-operators}. Consider the moduli space $M:=M_H(v)$ with $r:=\operatorname{rk}(v)\geq 1$ and the product $M\times X$, let $\pi_M$ and $\pi_X$ be the projection from $M\times X$ to $M$ and $X$. Let $\mathcal{F}$ be a (twisted) coherent sheaf on $M\times X$. The geometric realization with respect to $\mathcal{F}$ on $M\times X$ is the algebra homomorphism 
\begin{align}
\label{geomreal}
    \xi_{\mathcal{F}}:\mathbb{D}^X \to H^\ast(M,\C),
\end{align}
which acts on generators $\ch{i}{\gamma}$ with $\gamma\in H^{\ast}(X,\C)$ as 
\[\xi_{\mathcal{F}}\left( \ch{i}{\gamma} \right)=[\pi_{M\ast}\left( \operatorname{ch}(\mathcal{F}) \pi_X^\ast \gamma \right)]_{2i},\]
where $[\bullet]_k$, as before, denotes the cohomological degree $k$ part.
We now rephrase Lemma 4.1 and Lemma 4.2 of \cite{wang2024virasoroconstraintsk3surfaces} into the following form: 

\begin{lemma}[Lemma 4.1 and Lemma 4.2 in \cite{wang2024virasoroconstraintsk3surfaces}]
\label{claim}
Consider a Mukai vector $v=(r,D,n)$ with $r\geq 1$ and $(v,v)\geq 0$. Let $\{L_i\}_{1\leq i\leq 22}\subseteq H^2(X,\C)  \text{ be a basis of } H^2(X,\C)$. Denote the fundamental class and the point class of $X$ respectively by $\mathbbm{1}$ and $\boldsymbol{\mathrm{p}}$. Define an algebra isomorphism $\phi:\mathbb{D}^X\mapsto \mathbb{D}^X$ by \begin{align}
    \phi(\ch{i}{L_j})&=\ch{i}{L_j},\forall L_j \in \{L_k\}_k, \forall i\\
\phi(\ch{i}{\boldsymbol{\mathrm{p}}})&=r\ch{i}{\boldsymbol{\mathrm{p}}}, \forall i\\
\phi(\ch{i}{\mathbbm{1}+\boldsymbol{\mathrm{p}}})&=\frac{1}{r}\ch{i}{\mathbbm{1}+\boldsymbol{\mathrm{p}}} , \forall i.
\end{align}
Let $\mathcal{F}$ be a universal sheaf on $M(v)\times X$. Let $N$ be an integer such that $2N-2=(v,v)=D^2-2rn$. The universal sheaf on $X^{[N]}\times X$ is $\mathcal{I}_\mathcal{Z}=\mathcal{O}_{X^{[N]}\times X}-\mathcal{O}_{{\mathcal{Z}}}$

    Then we have:
$\phi$ descends to cohomology, i.e., there is a ring isomorphism $ \tilde{\phi}$ such that the following diagram commute:
\[
    \begin{tikzcd}
\mathbb{D}^X \arrow{r}{\phi} \arrow[swap]{d}{\xi_{\mathcal{F}\otimes \det{(\mathcal{F})}^{-1/r}}} & \mathbb{D}^X \arrow{d}{\xi_{\mathcal{I}_\mathcal{Z}}} \\%
H^\ast(M(v),\C) \arrow{r}{\Tilde{\phi}}& H^\ast(S^{[N]},\C).
\end{tikzcd}\]

We also have the following equality:
\begin{align}  
\int_{M(v)} \xi_{\mathcal{F}\otimes \det{(\mathcal{F})}^{-1/r}}(D)
=\int_{X^{[N]}} \xi_{\mathcal{I}_\mathcal{Z}}( \phi(D)),\forall D\in\mathbb{D}^X.
\end{align}

\end{lemma}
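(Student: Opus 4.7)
The statement is a restatement of \cite[Lemmas 4.1 and 4.2]{wang2024virasoroconstraintsk3surfaces}, so the plan is to follow that proof strategy. The main idea is to realize $\tilde\phi$ as a Markman operator $\gamma(g)$ for a carefully chosen isometry $g$ of the Mukai lattice $H^\ast(X, \mathbb{C})$, and then exploit the properties listed in Theorem \ref{markmantheorem}.

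First I would construct the isometry. Both $v = (r, D, n)$ and $v' := (1, 0, 1-N)$ satisfy $(v, v) = (v', v') = 2N - 2$, using $2N - 2 = D^2 - 2rn$. Applying Lemma \ref{Oberdlemma} to the Mukai lattice with the non-degenerate subspaces $\operatorname{Span}(v, L_1, \ldots, L_{22})$ and $\operatorname{Span}(v', L_1, \ldots, L_{22})$, I would obtain an isometry $g$ that fixes each $L_j$ and sends $v$ to $v'$. A direct computation in the hyperbolic plane $H^0 \oplus H^4$ then shows that such a $g$ necessarily scales $\boldsymbol{\mathrm{p}}$ by $r$ and $\mathbbm{1} + \boldsymbol{\mathrm{p}}$ by $1/r$, matching the definition of $\phi$ on generators.

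Next, by Theorem \ref{markmantheorem}(i)--(iv), the operator $\gamma(g): H^\ast(M(v), \mathbb{C}) \to H^\ast(X^{[N]}, \mathbb{C})$ is a degree-preserving ring isomorphism satisfying $\gamma(g)(B(c)) = B(g(c))$ for all $c \in H^\ast(X, \mathbb{Q})$. To identify $\gamma(g)$ with the map induced by $\phi$ on cohomology, the key observation is that the normalization $\mathcal{F} \otimes \det(\mathcal{F})^{-1/r}$ kills the first Chern class of the universal sheaf along the $M$-direction, making the geometric realization $\xi_{\mathcal{F} \otimes \det(\mathcal{F})^{-1/r}}$ independent of the choice of universal sheaf in the same way as the Markman normalization factor $\exp(\theta_\mathcal{F}(v)/(v,v))$ inside $u_v$. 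Expanding $\operatorname{ch}(\mathcal{F} \otimes \det(\mathcal{F})^{-1/r}) = \operatorname{ch}(\mathcal{F}) \cdot e^{-c_1(\mathcal{F})/r}$ and matching against the $B$-expression on each generator $\operatorname{ch}_i(L_j)$, $\operatorname{ch}_i(\boldsymbol{\mathrm{p}})$, $\operatorname{ch}_i(\mathbbm{1} + \boldsymbol{\mathrm{p}})$ would then establish the commutative square on generators; since both sides are ring homomorphisms this suffices.

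For the final integral equality, I would apply Theorem \ref{markmantheorem}(i): since $\gamma(g)$ is a degree-preserving isometry with respect to the Poincar\'e pairing and both moduli spaces have the same dimension $2N$, integration over the top cohomology is preserved, so $\int_{M(v)} \xi_{\mathcal{F}\otimes\det(\mathcal{F})^{-1/r}}(D) = \int_{X^{[N]}} \tilde\phi\bigl(\xi_{\mathcal{F}\otimes\det(\mathcal{F})^{-1/r}}(D)\bigr) = \int_{X^{[N]}} \xi_{\mathcal{I}_\mathcal{Z}}(\phi(D))$. The main obstacle will be the bookkeeping in the middle step: one has to track carefully how the Todd class and the twist by $\det(\mathcal{F})^{-1/r}$ combine, and verify that the specific scalars $r$ and $1/r$ appearing in $\phi$ correspond precisely to the action of $g$ on the hyperbolic plane $H^0 \oplus H^4$. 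Once this matching is nailed down, the rest is a direct application of Markman universality.
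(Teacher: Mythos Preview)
Your overall strategy---construct an isometry of the Mukai lattice via Lemma~\ref{Oberdlemma} and take $\tilde\phi=\gamma(g)$---is exactly what the paper does. But your specific choice of isometry does not work. You want $g$ to fix each $L_j$ and send $v=(r,D,n)$ to $v'=(1,0,1-N)$. For Lemma~\ref{Oberdlemma} to apply you need the two lists $\{v,L_1,\dots,L_{22}\}$ and $\{v',L_1,\dots,L_{22}\}$ to have identical Gram matrices, in particular $(v,L_j)=(v',L_j)$ for every $j$. But $(v,L_j)=D\cdot L_j$ while $(v',L_j)=0$, so once $D\neq 0$ the hypotheses fail and no such $g$ exists. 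Equivalently: an isometry fixing $H^2(X,\C)$ pointwise must preserve its orthogonal complement $H^0\oplus H^4$, so it cannot move $v$, which has an $H^2$-component $D$, to $v'$, which has none.

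The paper fixes this by first rewriting $\xi_{\mathcal{F}\otimes\det(\mathcal{F})^{-1/r}}(\operatorname{ch}(\lambda))$ explicitly in terms of the $B$-operator (equation~\eqref{Bform}); the classes that actually appear as arguments of $B$ are not $L_j$ but the $e^{D/r}$-twisted classes listed in~\eqref{classes}, namely $L_j\,e^{c_1(v)/r}$, $e^{c_1(v)/r}$, $\boldsymbol{\mathrm{p}}$, $\boldsymbol{\mathrm{p}}/r$, $v$. One then checks that these have the same Gram matrix as the Hilbert-scheme classes~\eqref{classes2} and that both spans are non-degenerate, so Lemma~\ref{Oberdlemma} produces the required isometry $h$. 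The twist by $e^{D/r}$ is precisely what absorbs the $H^2$-part of $v$; without it the argument collapses as above. The ``bookkeeping'' you allude to in your last paragraph is therefore not peripheral but the heart of the matter: it is what tells you which classes to feed into Lemma~\ref{Oberdlemma}, and a naive choice does not work.
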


\begin{proof}
We sketch the proof in \cite{Oberdieck_2022} and in \cite{wang2024virasoroconstraintsk3surfaces} here. 

In the $\operatorname{dim}(M(v))>2$ case:
we first write $\xi_{\mathcal{F}\otimes \det{(\mathcal{F})}^{-1/r}}(\Ch(\lambda))$ in the form of a product of classes of the form $B(\bullet)$, where $\lambda\in H^\ast(X,\C)$:
\begin{align}
\label{Bform}
    \xi_{\mathcal{F}\otimes \det{(\mathcal{F})}^{-1/r}}(\Ch(\lambda))=\exp{\left( B_1\left(\frac{\boldsymbol{\operatorname{p}}}{(\boldsymbol{\operatorname{p}},v)}-\frac{v}{(v,v)}\right) \right) B\left(\left(\exp{\left(\frac{c_1(v)}{r} \right)} \lambda^\vee \sqrt{\text{td}_X}^{-1}\right)\right)}.
\end{align}
 We need to look at the intersection matrix of the classes appearing in the arguments of $B$ and $B_1$ in \eqref{Bform}. In fact this intersection matrix is equivalent to the intersection matrix of the following elements.
\begin{align}
\label{classes}
    L_i \exp{\left(\frac{c_1(v)}{r} \right)},\quad\text{with }L_i\in \{L_k\}_k;\quad \exp{\left(\frac{c_1(v)}{r} \right)};\quad \boldsymbol{\operatorname{p}};\quad\frac{\boldsymbol{\operatorname{p}}}{r};\quad v.
\end{align}
In order to find $\tilde{\phi}$ relating $M(v)$ and $X^{[N]}$, we want to find an an isometry $h:H^\ast(X,\C)\to H^\ast(X,\C)$ sending the above classes in \eqref{classes} to the following classes:
\begin{align}
\label{classes2}
    L_i,\quad\text{with }L_i\in \{L_k\}_k;\quad \frac{\mathbbm{1}}{r};\quad r\boldsymbol{\operatorname{p}};\quad\boldsymbol{\operatorname{p}};\quad \mathbbm{1}-(N-1)\boldsymbol{\operatorname{p}}.
\end{align}
As the calculation in \cite{wang2024virasoroconstraintsk3surfaces} shows, \eqref{classes} and \eqref{classes2} have the same intersection matrices and their spans are both non-degenerate, therefore the Lemma \ref{Oberdlemma} implies such an isometry $h$ exists and therefore we could take $\tilde{\phi}$ as $\gamma(h)$, the Markman operator induced by $h$. And one can work out that the action of $\tilde{\phi}$ on the descendent algebra is $\phi$.

If $\operatorname{dim}(M(v))=2$ then $M$ and $X^{[1]}\cong X$ are K3 surfaces, one can take $\tilde{\phi}$ as the isometry in Section 2.4 of \cite{Oberdieck_2022}. 
\end{proof}

\section{The topological Euler characteristic of the fixed locus} \label{sec:top-eul-char}
Let $v=(v_0,v_1,v_2)$ be a Mukai vector with $v_1$ of type $(1,1)$. To simplify the notation, instead of using $\widetilde{Y}$, we will use $X$ to denote the covering K3 surface of the Enriques $Y$. Assume that $v$ is effective and primitive and assume the polarization is $v$-suitable as explained in the beginning of Section \ref{sec:markman-operators}. Let $\mathcal{F}_v$ be a universal sheaf on $M_X^H(v)\times X$.
We define $\mathcal{G}_v:=\mathcal{F}_v\otimes\det(\mathcal{F}_v)^{-1/r}\otimes \pi_X^\ast\Delta^{1/r}$, where $\Delta$ is a line bundle on $X$ with $c_1(\Delta)=v_1$. We also define $\mathcal{K}_v:=\mathcal{F}_v\otimes\det(\mathcal{F}_v)^{-1/r}$. Note that $\mathcal{G}_v$ and $\mathcal{K}_v$ exist as rational K-theory classes and $\mathcal{G}_v$ exists as a twisted sheave\footnote{We need $\mathcal{G}_v$ to be a twisted universal sheaf, but $\mathcal{K}_v$ is sufficient to be a K-theory class.}. We will omit the subscript $v$ when there is no ambiguity. We will use the following notation
$\mathrm{ch}^{\mathcal{G}}_i(\gamma):=\xi_\mathcal{G}(\mathrm{ch}_i(\gamma))$ and similarly for $\mathcal{K}$. Notice that $\mathcal{G}$ is a universal sheaf but $\mathcal{K}$ is not.

Now, we want to prove the following lemma:
\begin{lemma}
\label{deslem}
Let $X$ be a K3 surface.
    Consider the following commutative diagram:
    \[
    \begin{tikzcd}
\mathbb{D}^X \arrow{r}{f} \arrow[swap]{d}{\phi} & \mathbb{D}^X \arrow{d}{\phi} \\%
\mathbb{D}^X \arrow{r}{g}& \mathbb{D}^X,
\end{tikzcd}\]
where all maps are descendent algebra isomorphisms. Consider the moduli space $M(v):=M_X^H(v)$ with $v=(v_0,v_1,v_2)$ and the (twisted) sheaf $\mathcal{K}_v$ defined above. We will use the geometric realization map $\xi_v:=\xi_{\mathcal{K}_v}:\mathbb{D}^X\to H^\ast(M(v),\C)$.

Assume $f$ descends to cohomology with respect to $\xi_v$; i.e., there exists a ring isomorphism $\Tilde{f}$ making the following diagram commute:
\[
    \begin{tikzcd}
\mathbb{D}^X \arrow{r}{f} \arrow[swap]{d}{\xi_v} & \mathbb{D}^X \arrow{d}{\xi_v} \\%
H^\ast(M(v),\C) \arrow{r}{\Tilde{f}}& H^\ast(M(v),\C).
\end{tikzcd}\]
Assume also that there exist $M(w):=M_X^{H'}(w)$ such that $\phi$ satisfies the following property:
\begin{align}
\label{desCond}
    \int_{M(v)}\xi_v(D) = \int_{M(w)}\xi_w(\phi(D)),\quad \forall D\in \mathbb{D}^X,
\end{align}
where $\xi_w$ is defined similarly as above.
Then $g$ also descends to cohomology with respect to $\xi_w$.
\end{lemma}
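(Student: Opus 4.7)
My goal is to build a ring isomorphism $\tilde g : H^\ast(M(w),\C) \to H^\ast(M(w),\C)$ satisfying $\tilde g \circ \xi_w = \xi_w \circ g$; I would define it by the formula $\tilde g(\xi_w(D)) := \xi_w(g(D))$. Since descendents generate the cohomology of the K3 moduli spaces in question (a classical fact for the Hilbert scheme of points, and for moduli with primitive Mukai vector a theorem of Markman), $\tilde g$ will be defined on all of $H^\ast(M(w),\C)$ as soon as the formula is well-defined, and it will automatically be a ring isomorphism, with two-sided inverse given by the analogous construction applied to $g^{-1}$. Hence the sole content of the lemma is the well-definedness implication
\[
\xi_w(D) = 0 \ \implies\ \xi_w(g(D)) = 0, \qquad D \in \mathbb{D}^X.
\]

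The core step is a transfer of the vanishing from $M(w)$ to $M(v)$. By Poincar\'e duality on the smooth projective variety $M(w)$ combined with the generation hypothesis, $\xi_w(D)=0$ is equivalent to $\int_{M(w)} \xi_w(D\cdot E) = 0$ for every $E\in\mathbb D^X$; hypothesis \eqref{desCond} converts this to $\int_{M(v)} \xi_v(\phi^{-1}(D))\cdot \xi_v(\phi^{-1}(E)) = 0$ for every $E$, using multiplicativity of $\phi^{-1}$ and $\xi_v$. Since $\phi^{-1}$ is a bijection on $\mathbb D^X$, letting $E$ run through $\mathbb D^X$ the class $\phi^{-1}(E)$ sweeps out all of $\mathbb D^X$, so Poincar\'e duality on $M(v)$ and the generation hypothesis there yield the clean reformulation
\[
\xi_w(D) = 0 \ \iff\ \xi_v(\phi^{-1}(D)) = 0.
\]
I would then invoke the commutativity of the given square, which reads $\phi^{-1}\circ g = f\circ \phi^{-1}$, and the descent hypothesis $\xi_v\circ f = \tilde f \circ \xi_v$, to obtain $\xi_v(\phi^{-1}(g(D))) = \tilde f(\xi_v(\phi^{-1}(D)))$. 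Because $\tilde f$ is a ring isomorphism and hence injective, this vanishes exactly when $\xi_v(\phi^{-1}(D))$ does; applying the previous equivalence to both $D$ and $g(D)$ then closes the loop and produces the desired implication.

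The main obstacle — and the only non-formal input — is the generation of $H^\ast(M(v),\C)$ and $H^\ast(M(w),\C)$ by the images of $\xi_v$ and $\xi_w$, without which the Poincar\'e-duality step collapses: a nonzero cohomology class could be orthogonal to every descendent and so escape detection. In the intended application both $M(v)$ and $M(w)$ are either Hilbert schemes of points on a K3 surface or Markman-type moduli with primitive Mukai vector, for which this generation property is well established, so it can be cited rather than reproved. A minor point to check alongside this is that $\phi$ (and hence $\phi^{-1}$) sends classes realizing top-degree cohomology on $M(v)$ to classes realizing top-degree cohomology on $M(w)$, so that the integration identity \eqref{desCond} is applied only in degrees where both sides are genuinely non-trivial; this is automatic in the setup of Lemma~\ref{claim} since $\dim M(v) = \dim M(w) = 2N$ and $\phi$ is degree-preserving.
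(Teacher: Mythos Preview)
Your proposal is correct and follows essentially the same approach as the paper: both proofs use surjectivity of $\xi_v,\xi_w$ (via Markman's generation result, adapted to the normalized sheaf $\mathcal K$) together with Poincar\'e duality to characterize $\ker\xi_w$ by the integral pairing, then transfer vanishing through $\phi$ using \eqref{desCond} and conclude via $g=\phi f\phi^{-1}$. The paper merely packages the argument in terms of the ideals $I_v=\ker\xi_v$, $I_w=\ker\xi_w$ and the identity $\phi(I_v)=I_w$, which is exactly your equivalence $\xi_w(D)=0\iff\xi_v(\phi^{-1}(D))=0$.
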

\begin{proof}
It is known that for K3 surfaces the map $\xi_\mathcal{F}:\mathbb{D}^X\to H^\ast(M(v),\C)$ is surjective \cite{MARKMAN2007622}, for any universal sheaf $\mathcal{F}$. $\xi_v$ uses $\mathcal{F}\otimes\det(\mathcal{F})^{-1/r}$ which is not a universal sheaf. However it is easy to see that $\mathcal{G}=\mathcal{F}\otimes\det(\mathcal{F})^{-1/r}\otimes \Delta^{1/r}$ is a universal sheaf, where $\Delta$ is a line bundle on the surface with $c_1(\Delta)=v_1$. Therefore we have that $\xi_{\mathcal{G}}(\ch{i}{\gamma})= \xi_v(\ch{i}{e^{\frac{1}{r}v_1}\cdot \gamma})$ which implies the map $\xi_v:\mathbb{D}^X\to H^\ast(M(v),\C)$ is surjective. Denote the kernel of $\xi_v$ by $I_v$. We then have a ring isomorphism $\mathbb{D}^X/I_v\cong H^\ast(M(v),\C)$. The statement that $f$ descends to $\Tilde{f}$ is equivalent to saying that the kernel of $\mathbb{D}^X\xrightarrow{f} \mathbb{D}^X\to \mathbb{D}^X/I_v$ contains $I_v$. Therefore we only need to prove $\ker(\mathbb{D}^X\xrightarrow{g} \mathbb{D}^X\to \mathbb{D}^X/I_w)\supseteq I_w$.

The ideal $I_v$ can be written as follows:
\begin{align}
    I_v=\left\{D\in \mathbb{D}^X\bigg|\int_{M(v)}\xi_v(DD')=0\quad\forall D'\in \mathbb{D}^X\right\}.
\end{align}
This can be seen as follows: Clearly, if $D$ is in the kernel, it is in $I_v$. In the other direction, if $D \in I_v$, then the image of $D$ in $H^\bullet(M(v))$ is zero due to the perfect pairing $H^\bullet(M(v)) \times H^\bullet(M(v)) \to \mathbb{Q}$.

By the property of $\phi$, it is obvious that $\phi(I_v)=I_w$. Therefore we have:
\begin{align}
    \ker(\mathbb{D}^X\xrightarrow{g} \mathbb{D}^X\to \mathbb{D}^X/I_w) &= \left\{D\in \mathbb{D}^X|\phi f \phi^{-1}(D)\in I_w\right\}\\&=\left\{D\in \mathbb{D}^X\bigg|\int_{M(w)}\xi_w\left(D'\cdot\phi f \phi^{-1}(D)\right)=0\quad\forall D'\in \mathbb{D}^X\right\}\\
    &=\left\{D\in \mathbb{D}^X\bigg|\int_{M(v)}\xi_v\left(\phi^{-1}(D')f(\phi^{-1}(D))\right)=0\quad\forall D'\in \mathbb{D}^X\right\}\\
    &=\phi\left(\left\{D\in \mathbb{D}^X\bigg|\int_{M(v)}\xi_v\left(D'f(D)\right)=0\quad\forall D'\in \mathbb{D}^X\right\} \right)\\
&=\phi\left(\ker(\mathbb{D}^X\xrightarrow{f} \mathbb{D}^X\to \mathbb{D}^X/I_v\right)
    \supseteq\phi(I_v)=I_w.
\end{align} 
    Therefore $g$ also descents to cohomology.
\end{proof}

Now, let us recall the setting of our problem to solve. $\pi: X\to Y$ is the two-to-one covering of the K3 surface to the Enriques surface. Here, $Y$ is an Enriques surface such that $\pi^\ast B_Y=0$, where $B_Y$ is the only non-trivial element in $\operatorname{Br}(Y)$. In this case there exists an line bundle $L$ on $X$ such that $\iota^\ast L=L^{-1}$ where $\iota$ is the involution on $X$. We want to study the moduli space of $\sigma$-invariant stable sheaves of rank $2$ on $X$, where $\sigma$ is the involution defined in the Definition \ref{sigma}.
 Let $G$ be a $\sigma$-invariant sheaf of any rank, then by the calculation in \cite{reede2024enriquessurfacestrivialbrauer}, the Mukai vector of $G$ has the following form:
\begin{align}
    v(G) = (2s,\overline{D}+sL,\chi(G)-2s),
\end{align}
For some $D\in \operatorname{NS}(Y)$ and $s\in \mathbb{N}$ and $\overline{D}$ means $\pi^\ast D$. Since we only consider rank $2$ case, we set $s=1$ and $v=(2,\overline{D}+L,n)$. We denote the moduli space $M_X^H(v)$ by $M(v)$. We also require that $Y$ lies in the Beauville locus $B_N$ with $N$ in the range of the Proposition \ref{prp:reede-untwisted}. Reede \cite{reede2024enriquessurfacestrivialbrauer} shows that $\sigma$ induces a anti-sympletic involution $\sigma:M(v)\to M(v)$. Moreover $\sigma$ also induces a ring isomorphism by pullback, and we denote this isomorphism by $\sigma^\ast:H^\ast(M(v),\C)\to H^\ast(M(v),\C)$.

\begin{lemma}
\label{calofactioniota}
Let $X$ be the K3 surface and let $\iota$ be the involution as above.  Denote the moduli space of all $\mu$-stable sheaves on $X$ by $M$. $\iota$ induces an involution $\iota: M\to M$.
Consider also the operator $\Xi:\mathrm{Coh}(X)\to \mathrm{Coh}(X): E\mapsto E\otimes L$. $\Xi$ induces a morphism $\Xi:M\to M$. Let $\mathcal{E}$ be a (twisted) universal sheaf on $M\times X$. Then we have:
\begin{align}
    \iota^\ast \left(\operatorname{ch}_i^\mathcal{E}(\gamma)\right)=\operatorname{ch}_i^\mathcal{E}(\iota^\ast \gamma),\\
    \Xi^\ast\left(\operatorname{ch}_i^\mathcal{E}(\gamma)\right)=\operatorname{ch}_i^\mathcal{E}(\gamma\cdot e^{L}),
\end{align}
where $\operatorname{ch}_i^\mathcal{E}(\gamma)$ denote the geometric realization $\xi_{\mathcal{E}}\left(\operatorname{ch}_i(\gamma)\right)$.
\end{lemma}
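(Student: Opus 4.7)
The plan is to prove both identities directly from the definition
\[
\operatorname{ch}_i^{\mathcal{E}}(\gamma)=\left[\pi_{M*}\bigl(\operatorname{ch}(\mathcal{E})\cdot \pi_X^{*}\gamma\bigr)\right]_{2i}
\]
by commuting $\iota^{*}$ (respectively $\Xi^{*}$) on $M$ past the pushforward $\pi_{M*}$ using flat base change, and then exploiting that the automorphism $\operatorname{id}\times\iota$ of $M\times X$ lies over $M$, so applying it inside $\pi_{M*}$ is harmless. This reduces each identity to a clean compatibility statement for the universal sheaf itself.

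First I would compute
\[
\iota^{*}\operatorname{ch}_i^{\mathcal{E}}(\gamma)=\left[\pi_{M*}\bigl((\iota\times\operatorname{id})^{*}\operatorname{ch}(\mathcal{E})\cdot \pi_X^{*}\gamma\bigr)\right]_{2i}
\]
by base change, and then apply $(\operatorname{id}\times\iota)^{*}$ to the integrand; since $\operatorname{id}\times\iota$ is an isomorphism of $M\times X$ over $M$, this does not change the pushforward. It turns $(\iota\times\operatorname{id})^{*}\operatorname{ch}(\mathcal{E})$ into $(\iota\times\iota)^{*}\operatorname{ch}(\mathcal{E})$ and converts $\pi_X^{*}\gamma$ into $\pi_X^{*}\iota^{*}\gamma$. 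The first claim then follows from the isomorphism $(\iota\times\iota)^{*}\mathcal{E}\cong\mathcal{E}$. For the $\Xi$ identity the same recipe gives
\[
\Xi^{*}\operatorname{ch}_i^{\mathcal{E}}(\gamma)=\left[\pi_{M*}\bigl((\Xi\times\operatorname{id})^{*}\operatorname{ch}(\mathcal{E})\cdot \pi_X^{*}\gamma\bigr)\right]_{2i},
\]
and this time the key identification is $(\Xi\times\operatorname{id})^{*}\mathcal{E}\cong \mathcal{E}\otimes \pi_X^{*}L$, coming from the fact that the fibre of $(\Xi\times\operatorname{id})^{*}\mathcal{E}$ over $[F]$ is $F\otimes L$; in Chern characters this contributes precisely the factor $e^{L}$ that multiplies $\gamma$.

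The main obstacle is that a universal sheaf is only determined up to tensoring by $\pi_M^{*}\mathcal{L}'$ for a line bundle $\mathcal{L}'$ on $M$, so the two compatibilities above hold a priori only up to such a twist, which would contribute an unwanted factor $e^{\pi_M^{*}c_1(\mathcal{L}')}$ after pushforward. This ambiguity is killed when one works with the normalised sheaf $\mathcal{K}=\mathcal{F}\otimes\det(\mathcal{F})^{-1/r}$ (and hence with $\mathcal{G}$): a twist of $\mathcal{F}$ by $\pi_M^{*}\mathcal{L}'$ is exactly cancelled by the corresponding correction in $\det(\mathcal{F})^{-1/r}$, so on the nose one obtains $(\iota\times\iota)^{*}\mathcal{K}\cong \mathcal{K}$ and $(\Xi\times\operatorname{id})^{*}\mathcal{K}\cong \mathcal{K}\otimes \pi_X^{*}L$. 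Thus the lemma should be read with $\mathcal{E}=\mathcal{K}$ (or $\mathcal{G}$), which is indeed the usage in the computations that follow, and the proof reduces to the two fibre-wise verifications together with the base-change manipulation above.
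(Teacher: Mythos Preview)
Your proof is correct and follows essentially the same route as the paper: base change to pull $(\iota\times\operatorname{id})^{*}$ (resp.\ $(\Xi\times\operatorname{id})^{*}$) through $\pi_{M*}$, then invoke the compatibility of the universal sheaf. The paper phrases the key $\iota$-identity as $(\iota\times\operatorname{id})^{*}\mathcal{E}\cong(\operatorname{id}\times\iota)^{*}\mathcal{E}$ rather than your equivalent $(\iota\times\iota)^{*}\mathcal{E}\cong\mathcal{E}$, and it simply asserts $(\Xi\times\operatorname{id})^{*}\mathcal{E}\cong\mathcal{E}\otimes\pi_X^{*}L$ without discussing the possible $\pi_M^{*}\mathcal{L}'$ twist; your added remark that this ambiguity is killed by passing to the normalised sheaf $\mathcal{K}$ (or $\mathcal{G}$) is a point the paper leaves implicit but which is indeed how the lemma is used downstream.
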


\begin{proof}
Consider the projections $\pi_M:M\times X\to M$ and $\pi_X:M\times X\to X$. The universal sheaf $\mathcal{E}$ has the universal property that $(\iota\times \mathbbm{1}_X)^\ast \mathcal{E}\cong(\mathbbm{1}_M\times\iota )^\ast \mathcal{E}$. Then we have:
\begin{align}
    \iota^\ast(\mathrm{ch}^\mathcal{E}(\gamma))&= \iota^\ast \pi_{M\ast}(\operatorname{ch}(\mathcal{E})\cdot\pi_X^\ast \gamma)\\
    &= \pi_{M\ast} (\iota\times \mathbbm{1}_X)^\ast (\operatorname{ch}(\mathcal{E})\cdot\pi_X^\ast \gamma)\\
    &=\pi_{M\ast}  (\operatorname{ch}((\iota\times \mathbbm{1}_X)^\ast \mathcal{E})\cdot\pi_X^\ast \gamma)\\
    &=\pi_{M\ast}  (\operatorname{ch}((\mathbbm{1}_M\times\iota )^\ast \mathcal{E})\cdot\pi_X^\ast \gamma)\\
    &=\pi_{M\ast}(\mathbbm{1}_M\times\iota )^\ast(\operatorname{ch}( \mathcal{E})\cdot\pi_X^\ast \iota^\ast \gamma)\\
    &= \pi_{M\ast} (\Ch(\mathcal{E}) \cdot \pi^*_X \iota^*\gamma )\\
    &=\mathrm{ch}^\mathcal{E}(\iota^\ast\gamma).
\end{align}
For $\Xi$ we have:
\begin{align}
    \Xi^\ast(\mathrm{ch}^\mathcal{E}(\gamma))&= \Xi^\ast \pi_{M\ast}(\operatorname{ch}(\mathcal{E})\cdot\pi_X^\ast \gamma)\\
    &= \pi_{M\ast} (\Xi\times \mathbbm{1}_X)^\ast (\operatorname{ch}(\mathcal{E})\cdot\pi_X^\ast \gamma)\\
    &=\pi_{M\ast}  (\operatorname{ch}((\Xi\times \mathbbm{1}_X)^\ast \mathcal{E})\cdot\pi_X^\ast \gamma)\\
    &=\pi_{M\ast}  (\operatorname{ch}( \mathcal{E}\otimes L)\cdot\pi_X^\ast \gamma)\\
    &=\mathrm{ch}^\mathcal{E}(\gamma\cdot e^{L}),
\end{align}
where we used $(\Xi\times \mathbbm{1}_X)^\ast \mathcal{E}=\mathcal{E}\otimes L$.
\end{proof}

\begin{lemma}
\label{caldeslem}
Consider a Mukai vector $v=(2,\overline{D}+L,n)$.
    One can lift $\sigma^\ast$ to a algebra isomorphism $\Sigma^\ast$ between descendent algebras which satisfies the following commutative diagram:
\[
    \begin{tikzcd}
\mathbb{D}^X \arrow{r}{\Sigma^\ast} \arrow[swap]{d}{\xi_v} & \mathbb{D}^X \arrow{d}{\xi_v} \\%
H^\ast(M(v),\C) \arrow{r}{\sigma^\ast}& H^\ast(M(v),\C),
\end{tikzcd}\]
    where $\xi_v$ is defined as in Lemma \ref{deslem} and $\Sigma^\ast$ is defined as:
\begin{align}
\Sigma^\ast:\mathbb{D}^X\to\mathbb{D}^X:\operatorname{ch}_i(\gamma)\mapsto\operatorname{ch}_i(\iota^\ast \gamma ).
\end{align}
The above diagram can also be reformulated as:
\begin{align}
    \sigma^\ast(\mathrm{ch}^\mathcal{K}(\gamma))=\mathrm{ch}^\mathcal{K}(\iota^\ast\gamma)\quad \forall\gamma\in \mathbb{D}^X.
\end{align}
\end{lemma}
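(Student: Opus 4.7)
The plan is to reduce the claim to the $\iota$ case already handled in Lemma~\ref{calofactioniota}, by showing that the normalization defining $\mathcal{K} = \mathcal{F}\otimes\det(\mathcal{F})^{-1/r}$ exactly absorbs the twist by $L$ that distinguishes $\sigma$ from $\iota$. Well-definedness of $\Sigma^\ast$ as an algebra automorphism of $\mathbb{D}^X$ is immediate, since the prescription $\mathrm{ch}_i(\gamma)\mapsto \mathrm{ch}_i(\iota^\ast\gamma)$ is linear in $\gamma$ and $\iota^\ast$ is an involution of $H^\ast(X,\mathbb{C})$. The substance of the lemma lies in the reformulation $\sigma^\ast(\mathrm{ch}^{\mathcal{K}}(\gamma)) = \mathrm{ch}^{\mathcal{K}}(\iota^\ast\gamma)$.

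The key intermediate step is the identity
\[
(\sigma\times\mathrm{id}_X)^\ast \mathcal{K} \;=\; (\mathrm{id}_{M(v)}\times\iota)^\ast \mathcal{K}
\]
as rational K-theory classes on $M(v)\times X$. To see this, recall that by definition $\sigma(E)=\iota^\ast E\otimes L$, so both $(\sigma\times\mathrm{id}_X)^\ast \mathcal{F}$ and $(\mathrm{id}_{M(v)}\times\iota)^\ast \mathcal{F}\otimes\pi_X^\ast L$ are (twisted) universal sheaves for the family $\{\iota^\ast E\otimes L\}_{E\in M(v)}$ on $M(v)\times X$. Hence there is a line bundle $N$ on $M(v)$ with
\[
(\sigma\times\mathrm{id}_X)^\ast \mathcal{F} \;\cong\; (\mathrm{id}_{M(v)}\times\iota)^\ast\mathcal{F}\otimes \pi_X^\ast L\otimes\pi_{M(v)}^\ast N.
\]
Taking determinants on both sides and raising to the power $-1/r$ (with $r=2$) gives
\[
(\sigma\times\mathrm{id}_X)^\ast \det(\mathcal{F})^{-1/r} \;=\; (\mathrm{id}_{M(v)}\times\iota)^\ast \det(\mathcal{F})^{-1/r}\otimes\pi_X^\ast L^{-1}\otimes\pi_{M(v)}^\ast N^{-1}.
\]
Tensoring these two identities, the $\pi_X^\ast L$ and $\pi_{M(v)}^\ast N$ factors cancel and we obtain the displayed identity for $\mathcal{K}$.

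Given this, the proof is finished exactly as in Lemma~\ref{calofactioniota}. Using $\pi_X\circ(\mathrm{id}\times\iota)=\iota\circ\pi_X$ and $\pi_{M(v)}\circ(\mathrm{id}\times\iota)=\pi_{M(v)}$, we compute
\begin{align}
\sigma^\ast(\mathrm{ch}^{\mathcal{K}}(\gamma))
&= \pi_{M(v)\ast}\bigl(\mathrm{ch}\bigl((\sigma\times\mathrm{id}_X)^\ast\mathcal{K}\bigr)\cdot \pi_X^\ast\gamma\bigr) \\
&= \pi_{M(v)\ast}\bigl((\mathrm{id}\times\iota)^\ast\mathrm{ch}(\mathcal{K})\cdot (\mathrm{id}\times\iota)^\ast \pi_X^\ast \iota^\ast\gamma\bigr) \\
&= \pi_{M(v)\ast}\bigl(\mathrm{ch}(\mathcal{K})\cdot \pi_X^\ast \iota^\ast\gamma\bigr) \;=\; \mathrm{ch}^{\mathcal{K}}(\iota^\ast\gamma),
\end{align}
which is the required commutativity.

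The main obstacle I expect is the existence and well-behavedness of the correcting line bundle $N$ and the fractional power $\det(\mathcal{F})^{-1/r}$. The moduli space $M(v)$ is in general only a coarse (possibly gerby) moduli space, so $\mathcal{F}$ itself may exist only as a twisted universal sheaf, and the determinant line bundle, its $r$-th root, and the auxiliary twist $N$ have to be handled with some care; however, the whole point of the normalization $\mathcal{K}$ is precisely to make all these ambiguities cancel at the level of rational K-theory, so that the final identity of Chern characters is unambiguous.
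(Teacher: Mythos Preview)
Your argument is correct and takes a genuinely different route from the paper's proof. The paper factors $\sigma=\Xi\circ\iota$ and first applies Lemma~\ref{calofactioniota} to each factor on the full $\mu$-stable moduli space $M$, obtaining $\sigma^\ast(\mathrm{ch}^{\mathcal{G}}(\gamma))=\mathrm{ch}^{\mathcal{G}}(\iota^\ast\gamma\cdot e^{-L})$ for the genuine universal sheaf $\mathcal{G}$; it then translates from $\mathcal{G}$ to $\mathcal{K}$ via $\mathrm{ch}^{\mathcal{K}}(\gamma\cdot e^{\frac{1}{2}(\overline{D}+L)})=\mathrm{ch}^{\mathcal{G}}(\gamma)$, and the final cancellation uses $\iota^\ast L=L^{-1}$ and $\iota^\ast\overline{D}=\overline{D}$ explicitly. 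Along the way the paper must check that the intermediate Mukai vector $(2,\overline{D}-L,n)$ parametrizes only $\mu$-stable sheaves, so that the restriction from $M$ to components makes sense. Your approach bypasses all of this by proving the single K-theoretic identity $(\sigma\times\mathrm{id}_X)^\ast\mathcal{K}=(\mathrm{id}\times\iota)^\ast\mathcal{K}$ directly on $M(v)\times X$: the determinant normalization in $\mathcal{K}$ absorbs both the line bundle ambiguity $N$ and the twist $\pi_X^\ast L$ simultaneously, so the specific identities $\iota^\ast L=L^{-1}$, $\iota^\ast\overline{D}=\overline{D}$ never need to be invoked by hand, nor does the intermediate component. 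This is cleaner and more conceptual; the paper's version has the minor expository advantage of making visible exactly which properties of $L$ and $\overline{D}$ drive the simplification.
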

\begin{proof}
Notice that $\sigma=\Xi\circ \iota$ as in the Lemma \ref{calofactioniota}. If we consider the moduli space $M$ of $\mu$-stable sheaves on $X$ as in the Lemma \ref{calofactioniota}, then we have:
\begin{align}
    \sigma^\ast:H^\ast(M,\C)&\to H^\ast(M,\C)\\
    \mathrm{ch}^\mathcal{E}(\gamma)&\mapsto\mathrm{ch}^\mathcal{E}(\iota^\ast\gamma\cdot e^{-L}),
\end{align}
where $\mathcal{E}$ is a universal sheaf on $M\times X$.

The Mukai vectors involved in $\sigma=\Xi\circ \iota$ are $v=(2,\overline{D}+L,n)$ and $(2,\overline{D}-L,n)$, more precisely, we apply $\sigma$ to a sheaf of type $(2,\overline{D}+L,n)$, after applying $\iota$ we will get a sheaf of type $(2,\overline{D}-L,n)$, then after applying $\Xi$, we will again get a sheaf of type $(2,\overline{D}+L,n)$. We need to make sure that $M_X((2,\overline{D}-L,n))$ and $M_X((2,\overline{D}+L,n))$ are $\mu$-stable sheaves. For $M_X((2,\overline{D}+L,n))$, this is shown in the Lemma 2.5 and the Theorem 2.6 in \cite{reede2024enriquessurfacestrivialbrauer}. For $M_X((2,\overline{D}-L,n))$, we only need to notice that the discriminant of such a sheaf is the same as a sheaf of type $(2,\overline{D}+L,n)$ since $\overline{D}\cdot L=0$, then we can use the same argument as before. Therefore $M_X((2,\overline{D}-L,n))$ and $M_X((2,\overline{D}+L,n))$ are connected components of $M$. Restrict $\sigma$ to the connected component $M(v)$ and use the universal sheaf $\mathcal{G}$ defined at the beginning of this section, one has:
\begin{align}
\label{P18}
    \begin{split}
\sigma^\ast:H^\ast(M(v),\C)&\to H^\ast(M(v),\C)\\
    \mathrm{ch}^\mathcal{G}(\gamma)&\mapsto\mathrm{ch}^\mathcal{G}(\iota^\ast\gamma\cdot e^{-L}).
    \end{split}
\end{align}

Using the relation $\mathrm{ch}^\mathcal{K}(\gamma\cdot e^{\frac{1}{2}(\overline{D}+L)})=\mathrm{ch}^\mathcal{G}(\gamma)$ and $\eqref{P18}$, we find:
\begin{align}
    \sigma^\ast (\mathrm{ch}^\mathcal{K}(\gamma))&=\mathrm{ch}^\mathcal{K}(\iota^\ast[\gamma\cdot e^{-\frac{1}{2}(\overline{D}+L)}]\cdot e^{-L}\cdot e^{\frac{1}{2}(\overline{D}+L)})\\
    &=\mathrm{ch}^\mathcal{K}(\iota^\ast\gamma),
\end{align}
where the last equality used the fact that $\iota^\ast L=L^{-1}$ and $\iota^\ast \overline{D}=\overline{D}$.
\end{proof}

\begin{lemma}
\label{2diag}
Consider a Mukai vector $v=(2,\overline{D}+L,n)$. There exists ring isomorphisms $\phi$ and $\iota^\ast$ which make the following diagram commute:
\[\begin{tikzcd}
H^\ast(M(v),\C) \arrow{r}{\sigma^\ast} \arrow[swap]{d}{\phi} & H^\ast(M(v),\C) \arrow{d}{\phi} \\%
H^\ast(X^{[N]},\C) \arrow{r}{\iota^\ast}& H^\ast(X^{[N]},\C).
\end{tikzcd}\]    
Here $N$ is an even number and $\phi$ is a Markman operator thus an ring isomorphism and $\iota^\ast$ is the pull back of the cohomology ring induced by the morphism $\iota:X^{[N]} \to X^{[N]} :I\mapsto \iota^*(I)$ where $\iota$ is the involution on $X$.

\end{lemma}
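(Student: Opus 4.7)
The plan is to take $\phi$ in the diagram to be the ring isomorphism $H^\ast(M(v),\C)\to H^\ast(X^{[N]},\C)$ supplied by Lemma~\ref{claim} applied to $v=(2,\overline D+L,n)$; in the notation of that lemma this is the map denoted there by $\widetilde\phi$. Let $\phi_{\mathrm{desc}}\colon\mathbb D^X\to\mathbb D^X$ stand for the associated descendent-algebra automorphism, also produced by Lemma~\ref{claim}. The parity statement that $N$ is even follows directly from Remark~\ref{odddimrmk}: there we saw $\dim M(v)=(v,v)+2=2N$ is divisible by $4$, so $N$ is an even integer. Positivity of $N$ is our standing assumption that $M(v)$ has positive dimension.

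The second step is to rewrite both horizontal arrows of the target diagram as descents of a single descendent-algebra endomorphism
\[
\Sigma^\ast\colon\mathbb D^X\to\mathbb D^X,\qquad \operatorname{ch}_i(\gamma)\mapsto\operatorname{ch}_i(\iota^\ast\gamma).
\]
For the left column, Lemma~\ref{caldeslem} already says that $\sigma^\ast$ descends from $\Sigma^\ast$ along $\xi_v=\xi_{\mathcal K_v}$. For the right column, the pullback $\iota^\ast$ on $H^\ast(X^{[N]},\C)$ descends from the same $\Sigma^\ast$ along $\xi_{\mathcal I_{\mathcal Z}}$; the proof is an exact replica of Lemma~\ref{calofactioniota}, using only that the universal ideal sheaf satisfies $(\iota\times\mathrm{id}_X)^\ast\mathcal I_{\mathcal Z}\cong(\mathrm{id}_{X^{[N]}}\times\iota)^\ast\mathcal I_{\mathcal Z}$, which is the defining universal property once one writes $\iota^\ast$ on $X^{[N]}$ as the morphism $[Z]\mapsto[\iota(Z)]$.

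The key step is then to verify that $\phi_{\mathrm{desc}}$ commutes with $\Sigma^\ast$ on $\mathbb D^X$. Since both are algebra homomorphisms it suffices to check on the generators $\operatorname{ch}_i(\mathbbm 1)$, $\operatorname{ch}_i(L_j)$, $\operatorname{ch}_i(\mathbf p)$. Because $\iota$ is holomorphic, $\iota^\ast$ fixes the fundamental class $\mathbbm 1$ and the point class $\mathbf p$, so $\Sigma^\ast$ is the identity on $\operatorname{ch}_i(\mathbbm 1)$ and $\operatorname{ch}_i(\mathbf p)$ and commutativity is automatic. On the $H^2$-generators $\operatorname{ch}_i(L_j)$ the map $\iota^\ast$ acts by some matrix in the basis $\{L_k\}$, while by its very definition in Lemma~\ref{claim} the map $\phi_{\mathrm{desc}}$ is the identity on every $\operatorname{ch}_i(L_k)$, so by linearity $\phi_{\mathrm{desc}}$ commutes with this matrix action as well. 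This is the step I expect to be the conceptual crux of the argument: the compatibility works only because the nontrivial part of $\iota^\ast$ is confined to $H^2(X,\C)$, which is precisely where $\phi_{\mathrm{desc}}$ acts trivially.

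Putting the three inputs together, for every $\gamma\in H^\ast(X,\C)$ one has
\[
\phi\bigl(\sigma^\ast\operatorname{ch}^{\mathcal K}(\gamma)\bigr)=\phi\bigl(\operatorname{ch}^{\mathcal K}(\iota^\ast\gamma)\bigr)=\xi_{\mathcal I_{\mathcal Z}}\phi_{\mathrm{desc}}\bigl(\operatorname{ch}(\iota^\ast\gamma)\bigr)=\xi_{\mathcal I_{\mathcal Z}}\Sigma^\ast\phi_{\mathrm{desc}}\bigl(\operatorname{ch}(\gamma)\bigr)=\iota^\ast\phi\bigl(\operatorname{ch}^{\mathcal K}(\gamma)\bigr),
\]
and since the geometric realization $\xi_{\mathcal K_v}$ is surjective onto $H^\ast(M(v),\C)$ (by Markman~\cite{MARKMAN2007622}, as already used in the proof of Lemma~\ref{deslem}), this identity extends by multiplicativity and $\C$-linearity to the whole cohomology ring, yielding the desired commutative square. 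Equivalently, one may package the argument through Lemma~\ref{deslem}: with $f=\Sigma^\ast$ (descending to $\sigma^\ast$ by Lemma~\ref{caldeslem}), $w=(1,0,1-N)$ the Hilbert-scheme Mukai vector, and $g=\phi_{\mathrm{desc}}\circ\Sigma^\ast\circ\phi_{\mathrm{desc}}^{-1}=\Sigma^\ast$ by the commutation just checked, Lemma~\ref{deslem} directly gives a ring isomorphism on $H^\ast(X^{[N]},\C)$ descending from $\Sigma^\ast$, which must then coincide with $\iota^\ast$ by the descent statement on the right column.
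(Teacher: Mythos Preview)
Your proof is correct and follows essentially the same route as the paper's own argument: apply Lemma~\ref{claim} to obtain $\phi$ and $\phi_{\mathrm{desc}}$, verify that $\phi_{\mathrm{desc}}$ commutes with $\Sigma^\ast$ on generators, invoke Lemma~\ref{deslem} to descend, and then identify the resulting map on $H^\ast(X^{[N]},\C)$ with $\iota^\ast$ via Lemma~\ref{calofactioniota}. Your verification of the commutation $\phi_{\mathrm{desc}}\circ\Sigma^\ast=\Sigma^\ast\circ\phi_{\mathrm{desc}}$ is more explicit than the paper's (which simply asserts ``by a simple calculation''), and you cite Remark~\ref{odddimrmk} for the parity of $N$ rather than repeating the computation, but these are presentational differences only.
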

\begin{proof}
    Apply Lemma \ref{claim} with Mukai vector $v=(2,\overline{D}+L,n)$, this gives an algebra isomorphism \begin{align}
\label{marphi}
\phi:\mathbb{D}^X&\to\mathbb{D}^X\\
\operatorname{ch}_i(\mathbbm{1}+\boldsymbol{\operatorname{p}})&\mapsto \frac{1}{2}\operatorname{ch}_i(\mathbbm{1}+\boldsymbol{\operatorname{p}})\\
\operatorname{ch}_i(L_j)& \mapsto \operatorname{ch}_i(L_j)\quad \forall L_j\in H^2(X)\\
\operatorname{ch}_i(\boldsymbol{\operatorname{p}})&\mapsto 2\operatorname{ch}_i(\boldsymbol{\operatorname{p}}).
\end{align}

Now consider the following diagram:
\[\begin{tikzcd}
\mathbb{D}^X \arrow{r}{\Sigma^\ast} \arrow[swap]{d}{\phi} & \mathbb{D}^X \arrow{d}{\phi} \\%
\mathbb{D}^X \arrow{r}{g}& \mathbb{D}^X
\end{tikzcd},\]
where $\Sigma^\ast$ is defined in Lemma \ref{caldeslem}. 
By a simple calculation, one can find that the $g$ which makes the above diagram commute is the following map: \[g:\mathbb{D}^X\to\mathbb{D}^X:\operatorname{ch}_i(\gamma)\mapsto \operatorname{ch}_i(\iota^\ast \gamma ).\] Note that $g$ is the same as $\Sigma^\ast$, therefore the diagram just means $\phi$ and $\Sigma^\ast$ commute.

Lemma \ref{deslem} implies $g$ descends to a map $\tilde{g}$ on the level of cohomology, since the Markman operator $\phi$ satisfies property \eqref{desCond} with respect to Hilbert scheme of points by Lemma \ref{claim}.
By Lemma \ref{claim}, $\phi$ also descends to a Markman operator which is a cohomology ring isomorphism, which we also denote by $\phi$. Also note that all the geometric realization maps from $\mathbb{D}^X$ to cohomology rings are surjections as mentioned in the proof of the Lemma \ref{deslem}.
Therefore we have the following commutative diagram:
\[\begin{tikzcd}
H^\ast(M(v),\C) \arrow{r}{\sigma^\ast} \arrow[swap]{d}{\phi} & H^\ast(M(v),\C) \arrow{d}{\phi} \\%
H^\ast(X^{[N]},\C) \arrow{r}{\Tilde{g}}& H^\ast(X^{[N]},\C).
\end{tikzcd}\]
Since the geometric realization map in Lemma \ref{deslem} uses the sheaf $\mathcal{K}$, we have $\tilde{g}(\operatorname{ch}_i^\mathcal{K}(\gamma))=\operatorname{ch}_i^\mathcal{K}(\iota^\ast \gamma )$. 

However, for $X^{[N]}$ we have $\mathcal{K}=\mathcal{G}$, therefore we also have: $\tilde{g}(\operatorname{ch}_i^\mathcal{G}(\gamma))=\operatorname{ch}_i^\mathcal{G}(\iota^\ast \gamma )$. Apply the Lemma \ref{calofactioniota} and restrict $\iota^\ast$ to Hilbert scheme of points one get that $\iota^\ast$ act in the same way as $\tilde{g}$ on descendents. Therefore $\tilde{g}$ has to be the same as the map $\iota^\ast$, i.e., $\tilde{g}=\iota^\ast$. 

The $N$ of $X^{[N]}$ satisfies $2N-2=(v,v)$. By the same calculation as in the Remark \ref{odddimrmk}, we have that $N$ is an even number.  
\end{proof}

\begin{lemma}
\label{lem:hilbert-fix-point}
Let $N$ be an even number, and let $\pi:X\to Y$ be the covering K3 surface of the Enriques surface.
    We have that the fixed loci $\operatorname{Fix}(\iota^\ast)$ of the involution $\iota^\ast: {X}^{[N]}\to X^{[N]}$ is isomorphic to $Y^{[\frac{N}{2}]}$.
\end{lemma}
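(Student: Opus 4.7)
The plan is to write down an explicit pullback isomorphism. Define a map of Hilbert functors
\[
\Phi \colon Y^{[N/2]} \longrightarrow X^{[N]}, \qquad Z \longmapsto \pi^{-1}(Z) = Z \times_Y X.
\]
Concretely, for a test scheme $T$ and a $T$-flat closed subscheme $Z \subset Y \times T$ of relative length $N/2$, set $\Phi_T(Z) = Z \times_{Y \times T} (X \times T)$. Since $\pi$ is finite étale of degree $2$, the base change is $T$-flat of relative length $N$, so $\Phi$ is a morphism of Hilbert schemes. Because $\pi \circ \iota = \pi$, we have $\iota(\pi^{-1}(Z)) = \pi^{-1}(Z)$ as subschemes of $X \times T$, so $\Phi$ factors through $\operatorname{Fix}(\iota^*)$.

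Next I would construct a set-theoretic inverse, and hence verify bijectivity on closed points. Let $W \subset X$ be a length $N$ subscheme with $\iota(W) = W$. Since $\iota$ acts freely on $X$, it acts freely on the support of $W$; in fact the induced $\mathbb{Z}/2$-action on the finite scheme $W$ is free (any fixed point in $W$ would be a fixed point of $\iota$ on $X$). Therefore the quotient $Z := W / \iota$ exists as a scheme, and the natural map $Z \hookrightarrow X/\iota = Y$ is a closed immersion. By effective descent for the étale $\mathbb{Z}/2$-cover $\pi$, the $\iota$-invariant closed subscheme $W$ descends uniquely to $Z$, and $\pi^{-1}(Z) = W$; the length of $Z$ is $N/2$. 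This construction is functorial in $T$ (descent along the étale cover $\pi \times \operatorname{id}_T$), so it defines an inverse $\Psi$ to $\Phi$ on functors.

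Finally, to upgrade bijectivity to an isomorphism of schemes, I would use smoothness. The scheme $Y^{[N/2]}$ is smooth of dimension $N$. The locus $\operatorname{Fix}(\iota^*) \subset X^{[N]}$ is smooth (fixed loci of finite group actions on smooth varieties in characteristic zero are smooth) and has dimension $N$, since $\iota^*$ is anti-symplectic on the hyperkähler manifold $X^{[N]}$ (because $\iota^*\omega_X = -\omega_X$) and its fixed locus is therefore Lagrangian. Because $\pi$ is étale, flat base change gives a natural identification of tangent spaces $T_Z Y^{[N/2]} = H^0(Z, N_{Z/Y}) \cong H^0(\pi^{-1}(Z), \pi^*N_{Z/Y})^{\iota} = T_{\pi^{-1}(Z)} \operatorname{Fix}(\iota^*)$, so $d\Phi$ is an isomorphism everywhere. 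A bijective morphism between smooth varieties of the same dimension that is étale is an isomorphism, finishing the proof. The main technical point to check carefully is the descent step in the second paragraph: that the scheme-theoretic quotient $W/\iota$ exists and its formation commutes with base change in $T$, which rests on $\pi$ being a finite étale Galois cover with free $\mathbb{Z}/2$-action.
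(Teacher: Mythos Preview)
Your argument is correct, and it takes a genuinely different route from the paper's proof.

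The paper does not argue directly with subschemes and descent. Instead, it invokes Kim's general results~\cite{Kim_1998} on the pullback map $\pi^\ast\colon M_Y^0\to M_X$ between moduli of $\mu$-stable sheaves: Kim shows this map is an unramified two-to-one cover onto the fixed locus $\operatorname{Fix}(\iota^\ast)$, and that two sheaves $E,E'$ have the same image iff $E'\cong E$ or $E'\cong E\otimes\omega_Y$. Specialising to rank~$1$ with trivial determinant, the two preimages of any point live in the two components $Y^{[N/2]}$ and $M_Y(1,\omega_Y,1-N/2)$ separately (since tensoring by $\omega_Y$ changes the determinant in odd rank), so each component maps isomorphically onto $\operatorname{Fix}(\iota^\ast)$.

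Your approach is more elementary and self-contained: you bypass Kim's moduli-theoretic machinery entirely by working with subschemes, using that $\pi$ is a free étale $\mathbb{Z}/2$-cover so that $\iota$-invariant finite subschemes of $X$ descend uniquely to $Y$. The tangent-space identification $H^0(Z,N_{Z/Y})\cong H^0(\pi^{-1}(Z),N_{\pi^{-1}(Z)/X})^{\iota}$ then gives étaleness of $\Phi$ directly. Note that once you have this, the Lagrangian dimension count is redundant: a bijective étale morphism between smooth varieties is already an isomorphism. The paper's approach, by contrast, fits naturally into its narrative (Kim's and Reede's results on the two-to-one cover are used throughout), and makes transparent the parallel with the rank-$2$ twisted situation in Remark~\ref{rem:cover-odd-even}, where the cover is \emph{not} trivial and one genuinely needs the moduli-theoretic viewpoint.
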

\begin{proof}
Let $M_Y$ denote the moduli space of all $\mu$-stable sheaves on $Y$, similarly for $M_{X}$. Define $M_Y^0:=\{E| E\in M_{Y}, E\ncong E(K_Y)\}$.
In \cite{Kim_1998}, Kim studied the map $\pi^\ast: M_Y^0\to M_{X}$ induced by the pull back of $\pi$. In our case, $\pi^\ast$ it is well defined for $Y^{[N/2]}$ since $Y^{[N/2]} \subseteq M_Y^0$. Kim showed that $\pi^\ast$ is two-to-one with no branching and the image of $M_Y^0$ by $\pi^\ast$ is a Lagrangian subvariety in $M_{X}$ and is
equal to the fixed locus by involution $\iota$. The proof of (2) of the main Theorem in \cite{Kim_1998} shows that if $\pi^\ast E\cong \pi^\ast E'$ then we have $E\cong E'$. Therefore the two cover map over $\operatorname{Fix}$ is given by:
\[ Y^{[N/2]} \sqcup M_{Y}(1,\omega_Y,1-N/2))\to \operatorname{Fix}(\iota^\ast), \]
and each component is isomorphic to $\operatorname{Fix}(\iota^\ast)$. Therefore we have $\pi^\ast:Y^{[N/2]}\to \operatorname{Fix}(\iota^\ast)$ is an isomorphism. 
\end{proof}

We can now use the Lefschetz formula (Proposition \ref{lefschetz}) to prove the following proposition.

\begin{proposition}
\label{eulercharoffix}
    Consider the moduli space $M_{\mathcal{A}/Y}(v_{\mathcal{A}})$ of torsion free $\mathcal{A}$-modules of rank one with twisted Mukai vector $v_{\mathcal{A}}$. Assume $Y\in B_M$ with $M<-\frac{v_\mathcal{A}^2+5}{2}$, then $M_{\mathcal{A}/Y}(v_{\mathcal{A}})$ is only non empty if its dimension is even. If we assume $\operatorname{dim}(M_{\mathcal{A}/Y}(v_{\mathcal{A}}))=N$ with $N=v_{\mathcal{A}}^2+1$ an even number, then $M_{\mathcal{A}/Y}(v_{\mathcal{A}})$ is smooth and
    \begin{align}
        e(M_{\mathcal{A}/Y}(v_{\mathcal{A}}))=2e(Y^{[N/2]}).
    \end{align}
\end{proposition}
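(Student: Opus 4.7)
The plan is to assemble the pieces already built: Reede's theorem gives the \'etale two-to-one cover and the smoothness of the target (hence of the source), Remark~\ref{odddimrmk} gives the parity statement, and the Lefschetz fixed point formula combined with the Markman operator diagram of Lemma~\ref{2diag} reduces the Euler characteristic to that of $Y^{[N/2]}$ via Lemma~\ref{lem:hilbert-fix-point}. I would therefore proceed as follows.

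First I would address the structural claims. Under the hypothesis $Y\in B_M$ with $M<-\frac{v_\mathcal{A}^2+5}{2}$, Proposition~\ref{prp:reede-untwisted} says $M_{\mathcal{A}/Y}(v_\mathcal{A})$ is smooth of the expected dimension and is an \'etale two-to-one cover of $\operatorname{Fix}(\sigma)\subseteq M(v)$, where $v=\pi^\ast v_\mathcal{A}\otimes e^{c_1(L)/2}=(2,\overline{D}+L,n)$. Remark~\ref{odddimrmk} shows $\dim M(v)\equiv 2\pmod 4$, hence $\dim\operatorname{Fix}(\sigma)$, being Lagrangian, is even and equal to $N=v_\mathcal{A}^2+1$; in particular if $N$ is odd the fixed locus is empty, so $M_{\mathcal{A}/Y}(v_\mathcal{A})$ is empty too. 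Since the cover is \'etale, $e(M_{\mathcal{A}/Y}(v_\mathcal{A}))=2e(\operatorname{Fix}(\sigma))$ in the even case, so it remains to evaluate $e(\operatorname{Fix}(\sigma))$.

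Next I apply the Lefschetz formula of Proposition~\ref{lefschetz} to the involution $\sigma:M(v)\to M(v)$, which is legitimate because $M(v)$ is smooth projective and $\operatorname{Fix}(\sigma)$ is smooth (as a Lagrangian in a holomorphic symplectic manifold with anti-symplectic involution):
\begin{equation}
e(\operatorname{Fix}(\sigma))=\sum_{i=0}^{2\dim M(v)}(-1)^i\operatorname{Tr}\bigl(\sigma^\ast\colon H^i(M(v),\C)\to H^i(M(v),\C)\bigr).
\end{equation}
By Lemma~\ref{2diag} the Markman operator $\phi\colon H^\ast(M(v),\C)\to H^\ast(X^{[N]},\C)$ is a degree-preserving ring isomorphism that conjugates $\sigma^\ast$ to $\iota^\ast$ on $H^\ast(X^{[N]},\C)$. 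Since $\phi$ preserves the grading, the traces on each cohomological degree agree, whence
\begin{equation}
\sum_i(-1)^i\operatorname{Tr}\bigl(\sigma^\ast|_{H^i(M(v))}\bigr)=\sum_i(-1)^i\operatorname{Tr}\bigl(\iota^\ast|_{H^i(X^{[N]})}\bigr).
\end{equation}

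Finally I apply Lefschetz again, this time to the involution $\iota\colon X^{[N]}\to X^{[N]}$, whose fixed locus is smooth projective and identified with $Y^{[N/2]}$ by Lemma~\ref{lem:hilbert-fix-point}. This yields
\begin{equation}
\sum_i(-1)^i\operatorname{Tr}\bigl(\iota^\ast|_{H^i(X^{[N]})}\bigr)=e(\operatorname{Fix}(\iota^\ast))=e(Y^{[N/2]}),
\end{equation}
so combining everything gives $e(M_{\mathcal{A}/Y}(v_\mathcal{A}))=2e(\operatorname{Fix}(\sigma))=2e(Y^{[N/2]})$, as required. The main potential obstacle is verifying the smoothness hypotheses of the Lefschetz formula at both stages; smoothness of $\operatorname{Fix}(\sigma)$ is provided by Proposition~\ref{prp:reede-untwisted}, and smoothness of $\operatorname{Fix}(\iota^\ast)\cong Y^{[N/2]}$ is standard. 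All other steps are direct applications of results already proven in the previous sections.
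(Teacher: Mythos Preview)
Your proof is correct and follows essentially the same approach as the paper's own proof: use Reede's result to get the \'etale double cover and the parity/smoothness statements, then compute $e(\operatorname{Fix}(\sigma))$ via the Lefschetz formula, transferring the trace to $X^{[N]}$ through the Markman operator diagram of Lemma~\ref{2diag}, and finally identifying $\operatorname{Fix}(\iota^\ast)\cong Y^{[N/2]}$ via Lemma~\ref{lem:hilbert-fix-point}. Your write-up is in fact more explicit than the paper's, spelling out the two applications of Lefschetz and the degree-preserving property of $\phi$ that makes the trace comparison work.
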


\begin{proof}
Theorem \ref{thm:reede} shows that if $Y$ lies on the restricted Beauville locus $B_M$ then $M_{\mathcal{A}/Y}(v_\mathcal{A})$ is an étale double cover of the fixed locus $\operatorname{Fix}(\sigma)$. As shown in Lemma \ref{2diag}, the dimension of $\operatorname{Fix}(\sigma)$ is an even number, which implies $M_{\mathcal{A}/Y}(v_{\mathcal{A}})$ is only non empty if its dimension is even within $B_N$. The étale double cover $\Pi:M_{\mathcal{A}/Y}(v_\mathcal{A})\to \operatorname{Fix}(\sigma)$ implies that this is a unramified topological 2-sheet covering space. Hence we have
\begin{align}
e(M_{\mathcal{A}/Y})=2e(\operatorname{Fix}(\sigma))=2e(Y^{[N/2]}),
\end{align}
where the last equality is obtained by applying the Proposition \ref{lefschetz} to Lemma \ref{2diag} and using Lemma \ref{lem:hilbert-fix-point} and also the fact $\operatorname{Fix}(\sigma)$ is a smooth Lagrangian subvariety. 
\end{proof}

Now we can prove the main theorem of this paper (Theorem \ref{maintheorem} in the introduction):
\begin{theorem}
    Let $Y$ be an Enriques surface and let $\mathcal{A}$ be an Azumaya algebra of degree $2$ on $Y$ that represents the unique nontrivial Brauer class. Fix a polarization $H$ of $Y$. Let $c_1\in H^2(Y,\Z)_{\mathrm{tf}}$ be a class modulo torsion. Let $\Ch_\mathcal{A}=(2,c_1,\Ch_2)$ be a twisted Chern character. Let $M = M^H_{\mathcal{A}/Y}(\Ch_\mathcal{A})$ be the moduli space of stable twisted sheaves with twisted Chern character $\Ch_\mathcal{A}$. Denote by $N$ the virtual dimension of $M$. Then we have an equality
    \begin{equation}
        e^{\vir}(M) = \begin{cases}
            0 & \text{ if $N$ is odd,}\\
            2\cdot e(Y^{[\frac{N}{2}]}) & \text{ if $N$ is even.}
        \end{cases}
    \end{equation}
\end{theorem}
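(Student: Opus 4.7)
The plan is to reduce to the Beauville locus via deformation invariance and then invoke Proposition~\ref{eulercharoffix}. First, I would apply Theorem~\ref{thm:deform-to-beauville-locus} to produce a smooth connected family $f:\mathcal{Y}\to S$ of polarised Enriques surfaces together with a line bundle $\mathcal{L}$, with one fibre $(\mathcal{Y}_p,\mathcal{H}_p,\mathcal{L}_p)=(Y,H,L)$ — choosing $L$ any line bundle on $Y$ with $c_1(L)$ equal to the given class modulo torsion — and such that, for a fixed $N<-\frac{v_\mathcal{A}^2+5}{2}$, the set of $q\in S$ with $\mathcal{Y}_q\in B_N$ is dense in $S$. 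After shrinking $S$ \'etale-locally so that the unobstructed Azumaya algebra $\mathcal{A}$ extends over $\mathcal{Y}$, and using Remark~\ref{rmk:existence-mod-space-enriques} to extend the horizontal sections specifying the twisted Mukai vector, Proposition~\ref{prp:existence-mod-space-enriques} supplies a relative projective moduli space $M^{\mathcal{H}}_{\mathcal{A}/\mathcal{Y}/S}(\widetilde{v_\mathcal{A}})\to S$ carrying a relative perfect obstruction theory.

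By Theorem~\ref{thm:perfect-obs-th} this obstruction theory commutes with base change, so the virtual Euler characteristic is constant along the connected base. Picking any $q\in S$ in the dense Beauville locus and setting $Y':=\mathcal{Y}_q$, one obtains $e^\vir(M^H_{\mathcal{A}/Y}(\Ch_\mathcal{A}))=e^\vir(M^{\mathcal{H}|_{Y'}}_{\mathcal{A}|_{Y'}/Y'}(\Ch_\mathcal{A}))$. Note that Remark~\ref{rk2onlystable} makes the gcd/stability assumption automatic in rank two, so no extra hypothesis on $c_1\cdot H$ is required.

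Next I would invoke Proposition~\ref{eulercharoffix} on $Y'$. The proposition asserts that $M^H_{\mathcal{A}/Y'}(v_\mathcal{A})$ is smooth of the expected dimension (so $e^\vir=e$), empty whenever $N$ is odd, and satisfies $e(M^H_{\mathcal{A}/Y'}(v_\mathcal{A}))=2\cdot e((Y')^{[N/2]})$ when $N$ is even. The odd case of the theorem follows at once. For the even case, I would conclude by observing that $e((Y')^{[N/2]})=e(Y^{[N/2]})$: the relative Hilbert scheme $\operatorname{Hilb}^{N/2}(\mathcal{Y}/S)\to S$ is smooth and proper with diffeomorphic fibres, hence its fibrewise Euler characteristic is constant on $S$.

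The main obstacle is conceptual rather than computational: one must check that the data (Azumaya algebra, polarisation, and twisted Mukai vector) can be simultaneously propagated across the family, and that the resulting relative obstruction theory is genuinely deformation invariant in the twisted setting. Both points are already addressed by Theorem~\ref{thm:deform-to-beauville-locus}, Proposition~\ref{prp:existence-mod-space-enriques}, Remark~\ref{rmk:existence-mod-space-enriques} and Theorem~\ref{thm:perfect-obs-th}, so the remaining argument is the short bookkeeping sketched above.
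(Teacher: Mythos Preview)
Your proposal is correct and follows essentially the same route as the paper: deform to the Beauville locus via Theorem~\ref{thm:deform-to-beauville-locus}, Proposition~\ref{prp:existence-mod-space-enriques} and Remark~\ref{rmk:existence-mod-space-enriques}, use deformation invariance of the virtual Euler characteristic, and then apply Proposition~\ref{eulercharoffix} on the Beauville fibre. The only cosmetic difference is in the last step: the paper invokes G\"ottsche's formula (which shows $e(S^{[n]})$ depends only on $e(S)$, constant for Enriques surfaces) to identify $e((Y')^{[N/2]})$ with $e(Y^{[N/2]})$, whereas you argue via Ehresmann on the relative Hilbert scheme---both are valid.
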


\begin{proof}
    Instead of twisted Chern character, we can use $\mathcal{A}$-Mukai vector $v_\mathcal{A}:=\Ch_\mathcal{A}\cdot \sqrt{\operatorname{td}_Y}$, so that $M=M^H_{\mathcal{A}/Y}(v_\mathcal{A})$ for some $v_\mathcal{A}$. By Theorem \ref{thm:deform-to-beauville-locus}, there is a smooth family of Enriques surfaces $f:\mathcal{Y}\to S$ over smooth base $S$. And there is a point $q \in S$ such that $\mathcal{Y}_q \in B_m$ with $m < -2v_{\mathcal{A}}^2 - 8$, there is a point $p \in S$ such that $\mathcal{Y}_p \cong Y$. By Proposition \ref{prp:existence-mod-space-enriques} and Remark \ref{rmk:existence-mod-space-enriques}, (possibly after shrinking $S$) there is a relative moduli space $M^\mathcal{H}_{\mathcal{A}/\mathcal{Y}/S}(v_{\mathcal{A}})\to S$ carrying a relative obstruction theory (here we use $v_{\mathcal{A}}$ and $\mathcal{A}$ to denote the Mukai vector and Azumaya algebra over the family $\mathcal{Y}$ by abuse of notation).  
    We have:
    \begin{align} e^\vir(M^H_{\mathcal{A}/Y}(v_\mathcal{A}))&=e(M^{\mathcal{H}_q}_{\mathcal{A}_q/\mathcal{Y}_q}(v_{\mathcal{A}}|_q))\\
    &=\begin{cases}
            0 & \text{ if $N$ is odd,}\\
            2\cdot e(\mathcal{Y}_q^{[\frac{N}{2}]})=2\cdot e(Y^{[\frac{N}{2}]}) & \text{ if $N$ is even.}
        \end{cases}\\
    \end{align}
    
    In the RHS of the first line we used $e(\bullet)$ since the moduli space is smooth within the Beauville locus $B_M$ therefore $e^\vir = e$, the last line used the Proposition \ref{eulercharoffix}. The fact $ e(\mathcal{Y}_q^{[\frac{N}{2}]})= e(Y^{[\frac{N}{2}]})$ can be seen from Göttsche's formula \cite{Gottsche1990} which states as following:
    \begin{align}
        \sum_{n=0}^{\infty} e\left(S^{[n]}\right) t^n=\prod_{m=1}^{\infty}\left(1-t^m\right)^{-e(S)}, \forall \text{ smooth projective surface $S$ over }\C.
    \end{align}
\end{proof}

\begin{remark}
    Our method can also be applied to untwisted sheaves. The proof is precisely the same, except that we need a different analysis to show that we can deform to a suitable situation. An Enriques surface is called nodal if there exists a smooth rational curve $R$. Otherwise, it is called unnodal. In the 10 dimensional moduli space of Enriques surfaces, a generic one is unnodal, the nodal ones form a 9 dimensional subvariety \cite{Kim_1998}. Therefore, we can always deform to such an unnodal surface. If $Y$ is unnodal and $v$ is a primitive Mukai vector, then the moduli space $M_Y(v)$ is always smooth of the expected dimension, see~\cite[pg. 7]{Nuer2015}. In this case, the main result of Kim~\cite{Kim_1998} shows that the analogue of Thm.~\ref{thm:reede} holds, i.e., the moduli of sheaves on the Enriques surface is a 2-to-1 \'etale cover of the fixed locus of $\iota^*$. From here the same application of the Lefschetz formula~\ref{lefschetz} gives the result. 
\end{remark}

\printbibliography
\end{document}